\newtheorem{thm}{Theorem}[section]
\newtheorem{prop}[thm]{Proposition}
\newtheorem{lem}[thm]{Lemma}
\newtheorem{lm}[thm]{Lemma}
\newtheorem{conj}[thm]{Conjecture}
\theoremstyle{definition}
\newtheorem{defn}[thm]{Definition}
\newtheorem{defin}[thm]{Definition}
\newtheorem{exmp}[thm]{Example}
\newtheorem{fact}[thm]{Fact}
\newtheorem{obs}[thm]{Observation}
\crefname{Lemma}{Lemma}{Lemmas}
\crefname{Proposition}{Proposition}{Propositions}
\theoremstyle{remark}
\newtheorem{rem}[thm]{Remark}
\def\monodown{\nu}
\def\Z{\mathbb{Z}}
\def\pcoc{\widetilde{\coc}}
\def\SS{\mathcal{S}} 
\def\CCC{\mathcal{C}} 
\def\poc{\widetilde{\oc}}
\def\ValGraph[#1]{\mathcal{O}v(#1)}
\def\AValGraph[#1,#2]{\mathcal{O}v^{#2}(#1)}
\def\OvMon[#1,#2]{\mathfrak{C}\mathcal{O}v^{#2}(#1)}
\def\Wpath[#1, #2]{\mathfrak{C}W^{#2}_{#1}}
\DeclareMathOperator{\coc}{c-occ}
\DeclareMathOperator{\pat}{pat}
\DeclareMathOperator{\Av}{Av}
\DeclareMathOperator{\oc}{occ}
\DeclareMathOperator{\conv}{conv}
\DeclareMathOperator{\st}{s}
\DeclareMathOperator{\ar}{a}
\DeclareMathOperator{\lb}{lb}
\DeclareMathOperator{\be}{beg}
\DeclareMathOperator{\en}{end}
\DeclareMathOperator{\std}{std} 
\DeclareMathOperator{\im}{im}
\DeclareMathOperator{\per}{per} 
\DeclareMathOperator{\dist}{dist}
\keywords{Feasible region, pattern-avoiding permutations, cycle polytopes, overlap graphs, consecutive patterns}
\subjclass[2010]{52B11, 05A05, 60C05}
\begin{document}
\title[The feasible regions for consecutive patterns]{The feasible regions for consecutive patterns of pattern-avoiding permutations}	

\author{Jacopo Borga}
\address{Department of Mathematics, Stanford University}
\email{jborga@stanford.edu}
\urladdr{https://www.jacopoborga.com/} % Delete if not wanted.

\author{Raul Penaguiao}
\address{Department of Mathematics, San Francisco State University}
\email{raulpenaguiao@sfsu.edu}
\urladdr{https://raulpenaguiao.github.io/} % Delete if not wanted.

\maketitle

\begin{abstract}
We study the feasible region for consecutive patterns of pattern-avoiding permutations. More precisely, given a family $\mathcal C$ of permutations avoiding a fixed set of patterns, we consider the limit of proportions of consecutive patterns on large permutations of $\mathcal C$. These limits form a region, which we call the \emph{consecutive patterns feasible region for $\mathcal C$}. 

We determine the dimension of the consecutive patterns feasible region for all families $\mathcal C$ closed either for the direct sum or the skew sum. These families include for instance the ones avoiding a single pattern and all substitution-closed classes. We further show that these regions are always convex and we conjecture that they are always polytopes.
We prove this conjecture when $\mathcal C$ is the family of $\tau$-avoiding permutations, with either $\tau$ of size three or $\tau$ a monotone pattern. 
Furthermore, in these cases we give a full description of the vertices of these polytopes via cycle polytopes.

Along the way, we discuss connections of this work with the problem of packing patterns in pattern-avoiding permutations and to the study of local limits for pattern-avoiding permutations.
\end{abstract}

\section{Introduction}

Pattern-avoiding permutations are well-known to show very different behaviour according to the set of patterns they avoid. This makes it extremely difficult to obtain results that are valid for every family of pattern-avoiding permutations. This belief is also confirmed by the available literature, where most of the results are restricted to some specific families of pattern-avoiding permutations. There is one famous exception: Marcus and Tardos~\cite{MR2063960} in 2004 proved the Stanley-Wilf conjecture. Formulated independently by Richard Stanley and Herbert Wilf, it states that for every permutation $\tau$, there is a constant $C$ depending on $\tau$ such that the number of permutations of length $n$ which avoid $\tau$ is at most $C^n$. 

In this paper we introduce the \emph{consecutive patterns feasible region} for a family $\mathcal C$ of pattern-avoiding permutations. Several motivations for studying these regions are provided in \cref{sect:motiv}.
We prove a general result --  computing their dimension -- that holds for instance for all families $\mathcal C$ avoiding a fixed pattern (see \cref{thm:dim_is_tight}).
We also study in depth the cases when $\mathcal C$ is the family of $\tau$-avoiding permutations, with $\tau$ of size three or $\tau$ a monotone pattern. For these families, we are able to give a complete description of the regions as polytopes (see Theorems \ref{thm:312-avoiding} and \ref{thm:feasregmonotones*}).

\subsection{The consecutive patterns feasible regions}\label{sect:motiv}

The study of limits of (random) pattern-avoiding permutations is a very active field in combinatorics and discrete probability theory. There are two main ways of investigating these limits:

\begin{itemize}
	\item The most classical one is to look at the limits of various statistics for pattern-avoiding permutations. For instance, the limiting distributions of the longest increasing subsequences in uniform pattern-avoiding permutations have been considered in  \cite{deutsch2003longest,mansour2019longest,bassino2021linear}. Another example is the general problem of studying the limiting distribution of the number of occurrences of a fixed pattern $\pi$ in a uniform random permutation avoiding a fixed set of patterns when the size tends to infinity (see for instance Janson's papers~\cite{janson2017patterns,janson2017patterns321,janson2018patterns}, where the author studied this problem in the model of uniform permutations avoiding a fixed family of patterns of size three). Many other statistics have been considered: for instance in \cite{bukata2018distributions} the authors studied the distribution of ascents, descents, peaks, valleys, double ascents and double descents over pattern-avoiding permutations. 
	
	\item The second way  is to look at geometric limits of large pattern-avoiding permutations. Two main notions of convergence for permutations have been defined: a global notion of convergence (called permuton convergence, \cite{MR2995721}) and a local notion of convergence\footnote{A third and new notion of convergence was recently introduced in \cite{bevan2020independence}; it interpolates between the two main notions mentioned in the paper.} (called Benjamini--Schramm convergence, \cite{borga2018local}). For an intuitive explanation of them we refer the reader to \cite[Section 1.1]{borga2019feasible}, where additional references can be found. We just mention here that permuton convergence is equivalent to the convergence of all \textit{pattern density} statistics (see \cite[Theorem 2.5]{bassino2017universal}); and Benjamini--Schramm convergence is equivalent to the convergence of all \textit{consecutive pattern density} statistics (see \cite[Theorem 2.19]{borga2018local}).
	The latter is the subject of this paper.
\end{itemize}

In this paper we study the feasible region for consecutive patterns of pattern-avoiding permutations.
This object is strongly connected with both the statistical study and the geometric study of permutations presented above (explanations are given below). We start by defining this region and presenting our main results. %Then we comment on these connections.

\medskip

Let $\mathcal{S}_k$ denote the set of permutations of size $k$ and $\SS$ the set of all permutations. Many basic concepts on permutations will be recalled in \cref{sect:notation}. In this introduction, we use the classical terminology and we briefly introduce essential notation along the way, like $\pcoc(\pi,\sigma)$ which denotes the proportion of  consecutive occurrences of a pattern $\pi$ in a permutation $\sigma$. Given a set of patterns $B\subset\mathcal{S},$ we denote by $\Av_n(B)$ the set of $B$-avoiding permutations of size $n$, and by $\Av(B)\coloneqq\bigcup_{n\in\Z_{\geq 0}}\Av_n(B)$ the set of $B$-avoiding permutations of arbitrary finite size. We denote by $|\Av_n(B)|$ the cardinality of $\Av_n(B)$. 

We consider the \emph{consecutive patterns feasible region for $\Av(B)$}, defined by
\begin{multline*}
P^{B}_k \coloneqq \{\vec{v}\in [0,1]^{\SS_k} \big| \exists (\sigma^m)_{m\in\Z_{\geq 1}} \in \Av(B)^{\Z_{\geq 1}} \text{ such that }\\
|\sigma^m| \to \infty \text{ and }  \pcoc(\pi, \sigma^m ) \to \vec{v}_{\pi}, \forall \pi\in\SS_k \}.
\end{multline*}
%For different choices of the set $B$ of avoided-patterns, we refer to these regions as \emph{pattern-avoiding feasible regions}. 
In words, the region $P^{B}_k$ is formed by the $k!$-dimensional vectors $\vec{v}$ for which there exists a sequence of permutations in $\Av(B)$ whose size tends to infinity and whose proportion of consecutive patterns of size $k$ tends to $\vec{v}$. For simplicity, whenever $B=\{\tau\}$ we simply write $P^{\tau}_k$ for $P^{\{\tau\}}_k$ (and we use the same convention for related notation).

Our first main result is the following one. We denote by $\oplus$ the \emph{direct sum} of two permutations and by $\ominus$ the \emph{skew sum} (definitions are in \cref{sect:notation}). 

\begin{thm}\label{thm:dim_is_tight}
	Fix $k\in\Z_{\geq 1}$ and a set of patterns $B\subset\mathcal{S}$ such that the family $\Av(B)$ is closed either for the $\oplus$ operation or $\ominus$ operation. The feasible region $P^{B}_k$ is closed and convex. Moreover,
	$$\dim(P^{B}_k)= |\Av_k(B)|-|\Av_{k-1}(B)|.$$
\end{thm}

%In particular, $P^{\Av ( \tau )}_k$ is always convex for any pattern $\tau\in\SS$, as $\Av ( \tau )$ is either closed for the $\oplus$ operation (whenever $\tau$ is $\oplus$-indecomposable) or closed for the $\ominus$ operation (whenever $\tau$ is $\ominus$-indecomposable).
\begin{rem}
	We emphasize that the hypothesis in \cref{thm:dim_is_tight} is not superfluous.
	Indeed, for some sets of patterns $B$, the region $P^{B}_k$ is not convex.
	For instance, if $B= \{132, 213, 231, 312\}$, then $\Av(B)$ is the set of monotone permutations. Therefore, the resulting feasible region for consecutive patterns is formed by two distinct points, hence it is not convex. 
\end{rem}

\begin{rem}
	For any fixed pattern $\tau \in \mathcal S$, the family $\Av ( \tau )$ is either closed for the $\oplus$ operation (whenever $\tau$ is $\oplus$-indecomposable) or closed for the $\ominus$ operation (whenever $\tau$ is $\ominus$-indecomposable).
	
	Therefore, by \cref{thm:dim_is_tight}, for every pattern $\tau\in\mathcal S$, the region $P^{\tau}_k$ is closed and convex, and
	$$\dim(P^{\tau}_k)= |\Av_k(\tau)|-|\Av_{k-1}(\tau)|.$$
\end{rem}

\begin{rem}
	Our theorem is also valid for various families of pattern-avoiding permutations avoiding multiple patterns. For instance all substitution closed-classes satisfy the hypothesis of \cref{thm:dim_is_tight}. Substitution closed-classes were first studied by Albert and Atkinson~\cite{MR2170110} and received much attention in various consecutive works. We refer to \cite[Section 2.2]{MR4115736} for an introduction to substitution closed-classes. We also remark that Benjamini--Schramm limits of substitution closed-classes were recently investigated in \cite{MR4115736}.
	
	A well-known example of a substitution closed-class is given by \emph{separable permutations}. These form the family of permutations avoiding the patterns $2413$ and $3142$ and they have been consider in several mathematical fields (in enumerative combinatorics and algorithmics \cite{MR1620935,MR3359905}, in real analysis \cite{MR3702027}, and in probability theory  \cite{MR1093199,MR3813988}). 
\end{rem}

Our second main result shows that the consecutive patterns feasible regions $P_k^{\tau}$ for $\tau$ of size three or $\tau$ a monotone pattern is a polytope, and gives a description of the corresponding vertices. Precise statements are given in \cref{thm:312-avoiding} and \cref{thm:feasregmonotones*}, after having introduced the required notation.
We finally conjecture (see \cref{conj:dim_is_tight}) that, whenever $\mathcal C $ is closed for the operation $\oplus $ or $\ominus $, the feasible region is a polytope.

\medskip

We now comment on the connection between the consecutive patterns feasible regions and the two ways of studying limits of pattern-avoiding permutations mentioned before. For the first one, i.e.\ the study of various statistics for pattern-avoiding permutations, the statistic that we consider here is the number of consecutive occurrences of a pattern (see, for instance, the survey of Elizalde \cite{MR3526425} for various motivations for studying these patterns). For the second one, i.e.\ the study of geometric limits, the relation is with Benjamini--Schramm limits (investigated, for instance, in \cite{bevan2019permutations,borga2018local,MR4115736}). In particular, having a precise description of the regions $P^{B}_k$ for all $k\in\Z_{\geq 1}$  determines all the Benjamini--Schramm limits that can be obtained through sequences of permutations in $\Av(B)$. 

\medskip

An orthogonal motivation for investigating the pattern avoiding feasible regions is the problem of \emph{packing patterns} in pattern avoiding permutations. The classical question of \emph{packing patterns} in permutations consists in describing the maximum number of occurrences of a pattern $\pi$ in any permutation of $\SS_n$ (see for instance \cite{MR2695616,MR1887086,MR2114184}). More recently, a similar question in the context of pattern-avoiding permutations has been addressed by Pudwell~\cite{Pudwell_pak_av}. It consists in describing the maximum number of occurrences of a pattern $\pi$ in any pattern-avoiding permutation.
Describing the feasible region for consecutive patterns of pattern-avoiding permutations $P^{B}_k$ is a fundamental step for solving the question of finding the asymptotic maximum number of consecutive occurrences of a pattern $\pi\in \SS_k$ in large permutations of $\Av(B)$ (indeed the latter problem can be translated into a linear optimization problem in the feasible region $P^{B}_k$).

\medskip

Additional motivations for studying the regions $P^{B}_k$ are the novelties of the results in this paper compared with a previous work \cite{borga2019feasible}. There, the consecutive patterns feasible region for the set of \emph{all} permutations $\mathcal S$ was introduced as: 
\begin{multline*}
P_k \coloneqq \big\{\vec{v}\in [0,1]^{\SS_k} \big| \exists (\sigma^m)_{m\in\Z_{\geq 1}} \in \SS^{\Z_{\geq 1}} \text{such that}\\
|\sigma^m| \to \infty \text{ and }  \pcoc(\pi, \sigma^m ) \to \vec{v}_{\pi}, \forall \pi\in\SS_k \big\}\, ,
\end{multline*}
and studied, specifically giving its dimension, establishing that it is a polytope, and describing all its vertices and facets. %For a more precise discussion on this point see \cref{sect:prev_res,sect:novelties}.
We refer the reader to \cite[Section 1.1]{borga2019feasible} for motivations to investigate this region and to \cite[Section 1.2]{borga2019feasible} for a summary of the related literature. 

\begin{rem}
	We recall that \emph{(classical) patterns feasible regions} were also considered in the literature. We refer to \cite[Section 1.2]{borga2019feasible} for a complete review of the related literature. We also remark that determining the dimension of the (classical) patterns feasible regions for the set of \emph{all} permutations is still an open problem (see \cite[Conjecture 1.3]{borga2019feasible}). 
\end{rem}

\subsection{Previous results on the standard feasible region for consecutive patterns}\label{sect:prev_res}

Before presenting our additional results on the consecutive patterns feasible regions, we recall two key definitions from \cite{borga2019feasible} and review some results presented in that paper.

\begin{defn}
	\label{defn:ov_graph}
	The \emph{overlap graph} $\ValGraph[k]$ is a directed multigraph with labelled edges, where the vertices are elements of $\SS_{k-1}$ and for every $\pi\in\SS_{k}$ there is an edge labelled by $\pi$ from the pattern induced by the first $k-1$ indices of $\pi$ to the pattern induced by the last $k-1$ indices of $\pi$.
\end{defn}
For an example with $k=3$ see the left-hand side of \cref{fig:P_3_and_rest2}.

\begin{defn}
	Let $G=(V,E)$ be a directed multigraph.
	For each non-empty cycle $\mathcal{C}$ in $G$, define $\vec{e}_{\mathcal{C}}\in \mathbb{R}^{E}$ such that
	$$(\vec{e}_{\mathcal{C}})_e \coloneqq \frac{\text{\# of occurrences of $e$ in $\mathcal{C}$}}{|\mathcal{C}|}, \quad \text{for all}\quad e\in E. $$
	We define the \emph{cycle polytope} of $G$ to be the polytope 
	$$P(G) \coloneqq \conv \{\vec{e}_{\mathcal{C}} | \, \mathcal{C} \text{ is a simple cycle of } G \}.$$
\end{defn}

We recall some results from \cite{borga2019feasible}. We start with the following consequence of \cite[Proposition 6]{gleiss2001circuit}.

\begin{prop}[Proposition 1.7 in \cite{borga2019feasible}]\label{thm:dim_cycle_pol}
	The cycle polytope of a strongly connected directed multigraph $G=(V,E)$ has dimension $|E|-|V|$.
\end{prop}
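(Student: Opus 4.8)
The plan is to prove that the cycle polytope $P(G)$ of a strongly connected directed multigraph $G=(V,E)$ has dimension $|E|-|V|$ by producing matching upper and lower bounds on the affine dimension. The cycle polytope lives naturally inside the affine subspace of $\mathbb{R}^E$ cut out by the "conservation" constraints: for each vertex $v\in V$, the sum of coordinates over edges entering $v$ equals the sum over edges leaving $v$, together with the normalization $\sum_{e\in E}\vec{v}_e=1$. Indeed every cycle vector $\vec{e}_{\mathcal C}$ satisfies these, so $P(G)$ is contained in their common solution set. One first checks that, since $G$ is strongly connected (hence weakly connected), the $|V|$ conservation equations have rank exactly $|V|-1$ (their sum is zero and no smaller dependency exists on a connected graph); adding the independent normalization constraint gives $|V|$ independent equations, so the ambient affine subspace has dimension $|E|-|V|$. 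This yields the upper bound $\dim P(G)\le |E|-|V|$.

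**For the lower bound,** the strategy is to exhibit enough simple cycles whose vectors affinely span a space of dimension $|E|-|V|$. First I would fix a spanning tree $T$ of $G$ (viewing $G$ as undirected for this purpose), leaving $|E|-|V|+1$ non-tree edges. For each non-tree edge $e$, strong connectivity lets us close up a simple cycle $\mathcal C_e$ through $e$ using only tree edges plus $e$; more carefully, one wants these cycles to be genuine directed simple cycles, which requires a small argument using strong connectivity to route around $e$ in the correct orientation (possibly not staying inside $T$, but one can still control the support). The cleanest route is: pick a base simple cycle $\mathcal C_0$, and for each remaining "degree of freedom" produce a simple cycle $\mathcal C_i$ such that the differences $\vec{e}_{\mathcal C_i}-\vec{e}_{\mathcal C_0}$ are linearly independent — one natural way is to show the linear span of $\{\vec{e}_{\mathcal C}\}$ over all simple cycles, together with $\vec 0$, equals the full cycle space of the directed graph (the kernel of the incidence map restricted appropriately), which has dimension $|E|-|V|+1$; projectivizing / intersecting with the normalization hyperplane then drops this to $|E|-|V|$.

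**The main obstacle** will be the lower bound, specifically the passage from the classical fact that the undirected/directed cycle space has dimension $|E|-|V|+1$ to a statement about \emph{simple} cycles with the correct orientation in a \emph{multigraph}. The cycle space is spanned by simple cycles of the underlying undirected graph, but these need not be directed cycles of $G$; one must use strong connectivity to "repair" each such fundamental cycle into a directed closed walk and then decompose that walk into directed simple cycles, arguing that the collection of all directed simple cycle vectors still spans the same space. Handling multigraph subtleties (multiple parallel edges, loops) and verifying that no affine dimension is lost in the projection onto the normalization hyperplane are the technical points that need care; the rank computation for the upper bound and the spanning-tree bookkeeping are routine by comparison.
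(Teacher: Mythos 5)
This proposition is cited from \cite{borga2019feasible} rather than proved in the present paper, so there is no in-paper proof to compare against; but the referenced paper's Proposition~2.6, which is quoted in the proof of \cref{thm:dimension} and identifies $P(G)$ with the flow polytope cut out by the conservation equations, the normalization, and non-negativity, shows that the original argument rests on the same structure you use, so your plan is essentially the right one. Your upper bound is complete: for a weakly connected graph the incidence matrix $L(G)$ has rank $|V|-1$, and the all-ones vector cannot be a tension (summing it around any directed cycle gives a contradiction), so the normalization is independent and the affine hull has dimension at most $|E|-|V|$. Your lower bound is left as a sketch, and the spanning-tree/fundamental-cycle route you mention first is the awkward one, since fundamental cycles of the underlying undirected graph are generally not directed cycles of $G$ and ``repairing'' them reintroduces the very machinery you are trying to avoid. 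The clean way to finish is by orthogonal complements: if $\vec{x}\in\mathbb{R}^E$ is orthogonal to the characteristic vector of every directed simple cycle, use strong connectivity to define $\phi(v)$ as minus the sum of $x_e$ over any directed walk from a fixed base vertex $v_0$ to $v$; this is well-defined because any two such walks, concatenated with a common directed return walk from $v$ to $v_0$, give closed directed walks which decompose into directed simple cycles, along each of which $\vec{x}$ sums to zero. One then reads off $x_e=\phi(\st(e))-\phi(\ar(e))$, so $\vec{x}$ lies in the row space of $L(G)$, proving that the directed simple cycle vectors span all of $\ker L(G)$, of dimension $|E|-|V|+1$; since every $\vec{e}_{\mathcal{C}}$ has coordinate sum one, $\vec 0$ is not in the affine hull, and the affine dimension drops by one to $|E|-|V|$ as required.
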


Our main result in \cite{borga2019feasible} is the following one.

\begin{thm}[Theorem 1.6 in \cite{borga2019feasible}] \label{thm:main_res_prev_art}
	$P_k$ is the cycle polytope of the overlap graph $\ValGraph[k]$. Its dimension is $k! - (k-1)!$ and its vertices are given by the simple cycles of $\ValGraph[k]$.
\end{thm}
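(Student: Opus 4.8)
The plan is to prove the two inclusions $P_k \subseteq P(\ValGraph[k])$ and $P(\ValGraph[k]) \subseteq P_k$ separately, and then deduce the dimension and vertex statements from \cref{thm:dim_cycle_pol} together with the observation that $\ValGraph[k]$ is strongly connected with $|E| = k!$ edges and $|V| = (k-1)!$ vertices. The key dictionary is this: a permutation $\sigma$ of size $n \geq k$ gives rise to a walk of length $n-k+1$ in $\ValGraph[k]$, namely the walk that reads off, consecutively, the patterns $\pat_{[i,i+k-1]}(\sigma)$ of all $k$ consecutive entries; the $i$-th edge of this walk is labelled by the $i$-th consecutive pattern of size $k$. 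Under this correspondence, the vector $(\pcoc(\pi,\sigma))_{\pi \in \SS_k}$ is exactly the (suitably normalized) edge-frequency vector of this walk, up to an error of order $O(1/n)$ coming from the difference between a walk and a closed walk (the first $k-1$ and last $k-1$ vertices).

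For the inclusion $P_k \subseteq P(\ValGraph[k])$: given $\vec v \in P_k$ witnessed by a sequence $\sigma^m$ with $|\sigma^m| \to \infty$, I would associate to each $\sigma^m$ its walk $W^m$ in $\ValGraph[k]$, close it up into a closed walk by appending a bounded-length path back to the start (possible by strong connectivity), and observe that the normalized edge-frequency vector of a closed walk lies in $P(\ValGraph[k])$ because a closed walk decomposes into simple cycles, so its edge-frequency vector is a convex combination of the $\vec e_{\mathcal C}$. Since the closing-up path has bounded length while $|\sigma^m|\to\infty$, the edge-frequency vector of $W^m$ differs from that of the closed walk by $o(1)$, so the limit $\vec v$ lies in the (closed) polytope $P(\ValGraph[k])$. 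The one subtlety to check here is that $\ValGraph[k]$ really is strongly connected — this needs a short combinatorial argument showing that from any pattern of size $k-1$ one can reach any other by successively appending entries, which is the kind of fact established in \cite{borga2019feasible} and which I would cite or reprove in a lemma.

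For the reverse inclusion $P(\ValGraph[k]) \subseteq P_k$, it suffices (since $P_k$ is closed and, as one checks, convex — or since the extreme points of $P(\ValGraph[k])$ are the $\vec e_{\mathcal C}$ for simple cycles $\mathcal C$ and one can mix them) to realize each simple-cycle vector $\vec e_{\mathcal C}$, and more generally each rational convex combination $\sum_j \lambda_j \vec e_{\mathcal C_j}$, as a limit of $\pcoc$-vectors of actual permutations. The idea is to build, for each simple cycle $\mathcal C$ of length $\ell$, a permutation whose consecutive-pattern walk wraps around $\mathcal C$ many times: concatenating the edge-labels of $\mathcal C$ read $N$ times produces a word that one "inflates" into a genuine permutation of size roughly $N\ell$ realizing $\vec e_{\mathcal C}$ up to $O(1/N)$. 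To get an arbitrary convex combination with weights $\lambda_j$, one concatenates blocks corresponding to the different cycles in proportions $\lambda_j$, inserting short "bridge" paths between consecutive blocks (again using strong connectivity), whose total length is negligible. The main obstacle, and the step I would expect to require the most care, is precisely this realization step: one must verify that a closed walk in $\ValGraph[k]$ can genuinely be lifted to a permutation whose consecutive patterns are exactly the prescribed edge-labels — i.e.\ that the "overlap compatibility" encoded in the graph is not only necessary but sufficient to assemble a consistent linear order on $\{1,\dots,n\}$ — and that the gluing of blocks does not create unintended patterns at the seams beyond the controlled $O(1)$ many. Once both inclusions are in hand, $P_k = P(\ValGraph[k])$, and \cref{thm:dim_cycle_pol} gives $\dim P_k = |E| - |V| = k! - (k-1)!$, while the vertices of a cycle polytope are exactly the simple-cycle vectors, which are in bijection with the simple cycles of $\ValGraph[k]$.
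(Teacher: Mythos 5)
Your proposal follows essentially the same route as the proof in \cite{borga2019feasible}: associate to each permutation its consecutive-pattern walk $W_k(\sigma)$ in $\ValGraph[k]$, decompose the walk into simple cycles plus a negligible remainder for the inclusion $P_k \subseteq P(\ValGraph[k])$, realize simple-cycle vectors and their convex combinations by lifting walks back to permutations for the reverse inclusion (using convexity of $P_k$ and strong connectivity of $\ValGraph[k]$), and finally read off the dimension from \cref{thm:dim_cycle_pol} with $|E|=k!$, $|V|=(k-1)!$. You correctly single out the crux — that a walk in $\ValGraph[k]$ can genuinely be lifted to a permutation, i.e.\ the surjectivity of $W_k$ stated as \cref{lemma:pathperm} — as the step requiring the most care, which you flag without supplying; this is exactly the key lemma whose analogues for $\Av(312)$ and $\Av(\monodown_n)$ occupy most of the present paper (\cref{lemma:pathperm312,lemma:pathpermmono_part2}).
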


An instance of the result above is depicted in \cref{fig:P_3_and_rest2}.

\begin{figure}[htbp]
	\centering
	\includegraphics[scale=0.9]{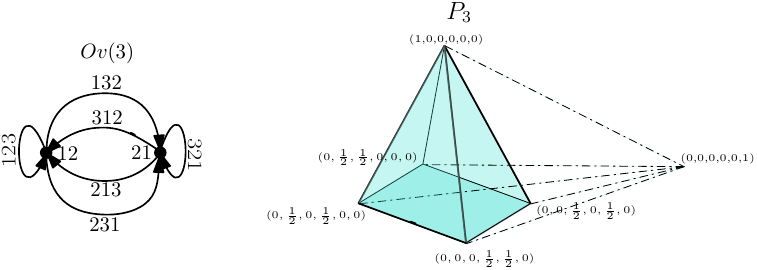}
	\captionsetup{width=\textwidth}
	\caption{The overlap graph $\ValGraph[3]$ and the four-dimensional polytope $P_3$. The coordinates of the vertices correspond to the patterns $(123,231,312,213,132,321)$ respectively. Note that the top vertex (resp.\ the right-most vertex) of the polytope corresponds to the loop indexed by $123$ (resp.\ $321$); the other four vertices correspond to the four cycles of length two in $\ValGraph[3]$. 
	We highlight in light-blue one of the six three-dimensional faces of $P_3$. This face is a pyramid with a square base. The polytope itself is a four-dimensional pyramid, whose base is the blue face. \cref{thm:main_res_prev_art} implies that $P_3$ is the cycle polytope of $\ValGraph[3]$. \label{fig:P_3_and_rest2}}
\end{figure}

We also recall for later purposes the following construction related to the overlap graph $\ValGraph[k]$.
Given a permutation $\sigma\in\SS_m$, for some $m\geq k$, we can associate with it a walk $W_k(\sigma)=(e_1,\dots, e_{m-k+1})$ in $\ValGraph[k]$ of size $m-k+1$, where $e_i$ is the edge of $\ValGraph[k]$ labelled by the pattern of $\sigma$ induced by the indices from $i$ to $i+k-1$.
The map $W_k$ is not injective, but in \cite{borga2019feasible} we proved the following. 

\begin{lem}[Lemma 3.8 in \cite{borga2019feasible}]\label{lemma:pathperm}
	Fix $k\in\Z_{\geq 1}$ and $m\geq k$. The map $W_k$, from the set $\SS_{m}$ of permutations of size $m$ to the set of walks in $\ValGraph[k]$ of size $m-k+1$, is surjective.	
\end{lem}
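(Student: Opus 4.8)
The plan is to construct, for any given walk $W = (e_1, \dots, e_{m-k+1})$ in $\ValGraph[k]$, an explicit permutation $\sigma \in \SS_m$ with $W_k(\sigma) = W$. The key observation is that a walk in $\ValGraph[k]$ is a sequence of edges, each labelled by a pattern in $\SS_k$, such that consecutive edges are ``compatible'': the pattern induced by the last $k-1$ entries of $e_i$ equals the pattern induced by the first $k-1$ entries of $e_{i+1}$ (this is precisely what it means for the head of $e_i$ to be the tail of $e_{i+1}$). So I must produce a sequence of real numbers $x_1 < \dots$ (or rather values $x_1, \dots, x_m$, all distinct) whose length-$k$ consecutive windows realize, in order, the patterns $e_1, \dots, e_{m-k+1}$; standardizing this sequence then gives the desired $\sigma$.

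First I would build the sequence greedily, one value at a time. Set $x_1, \dots, x_k$ to be any real numbers whose relative order is the pattern $e_1$. Then, inductively, suppose $x_1, \dots, x_{k-1+i}$ have been chosen so that for each $j \le i$ the window $(x_j, \dots, x_{j+k-1})$ has pattern $e_j$. To choose $x_{k+i}$: the window $(x_{i+1}, \dots, x_{k+i})$ must have pattern $e_{i+1}$, and we already know the relative order of its first $k-1$ entries $(x_{i+1}, \dots, x_{k-1+i})$ is the pattern induced by the first $k-1$ entries of $e_{i+1}$ — this holds by the compatibility of the walk, since that same sub-pattern is the pattern of the last $k-1$ entries of $e_i$, i.e.\ of $(x_{i+1}, \dots, x_{k-1+i})$. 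The pattern $e_{i+1}$ then dictates exactly between which two consecutive order statistics of $\{x_{i+1}, \dots, x_{k-1+i}\}$ the new value $x_{k+i}$ must fall (or that it must be the new maximum or new minimum); any real number in the appropriate open interval works, so such a choice always exists. This completes the induction and yields $\sigma \coloneqq \std(x_1, \dots, x_m) \in \SS_m$ with $W_k(\sigma) = W$.

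The only real point needing care is the compatibility step: one must check that the relative order of $(x_{i+1}, \dots, x_{k-1+i})$ is genuinely forced to agree with the initial $(k-1)$-pattern of $e_{i+1}$, so that the constraint imposed by $e_{i+1}$ on the placement of $x_{k+i}$ is consistent with the values already chosen. This is immediate from the definition of edges in $\ValGraph[k]$ (\cref{defn:ov_graph}) together with the definition of a walk, but it is the crux of why the greedy construction never gets stuck. Everything else — that an interval between consecutive order statistics is nonempty, that standardization preserves all consecutive patterns — is routine. I would present the argument as a clean induction on the length of the walk, noting at the end that $W_k(\sigma) = W$ by construction and hence $W_k$ is surjective.
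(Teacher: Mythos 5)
Your proof is correct and takes essentially the same approach as the paper: a greedy left-to-right construction in which you append one new value at a time, using the walk compatibility condition $\ar(e_i) = \st(e_{i+1})$ to guarantee the next value can always be placed consistently. The paper phrases the extension step via the operation $\sigma^{*\ell}$ (appending a value and renormalizing, see \cref{defin:append_permutation}) rather than working with real numbers and standardizing at the end, but this is a cosmetic difference; the crux is identical.
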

This lemma was a key step in the proof of \cref{thm:main_res_prev_art}.

\subsection{Additional results on the consecutive patterns feasible regions}\label{sect:novelties}

We start with a natural generalization of \cref{defn:ov_graph} to pattern-avoiding permutations.

\begin{defn}
	\label{defn:ov_graph_av_perm}
	Fix a set of patterns $B\subset\mathcal{S}$ and $k\in\Z_{\geq 1}$. The \emph{overlap graph} $\AValGraph[k,B]$ is a directed multigraph with labelled edges, where the vertices are elements of $\Av_{k-1}(B)$ and for every $\pi\in\Av_{k}(B)$ there is an edge labelled by $\pi$ from the pattern induced by the first $k-1$ indices of $\pi$ to the pattern induced by the last $k-1$ indices of $\pi$. 
\end{defn}

Informally, $\AValGraph[k,B]$ arises simply as the restriction of $\AValGraph[k,]$ to all the edges and vertices in $\Av(B)$. We have the following result, which is proved in \cref{sec:topopro}.

%We denote by $\oplus$ the \emph{direct sum} of two permutations and by $\ominus$ the \emph{skew sum} (recall that precise definitions are given in \cref{sect:notation}). 

\begin{prop}\label{prop:inclusions}
	Fix $k\in\Z_{\geq 1}$. For all sets of patterns $B\subset\mathcal{S}$, the feasible region $P^{B}_k$ satisfies $P^{B}_k\subseteq P(\AValGraph[k,B]) \subseteq P_k $. 	
%	Moreover, if $\Av(B)$ is closed either for the $\oplus$ operation or $\ominus$ operation then the feasible region $P^{B}_k$ is convex and
	%$\dim(P^{B}_k)\leq |\Av_k(B)|-|\Av_{k-1}(B)|$.
\end{prop}

%In particular, $P^{\Av ( \tau )}_k$ is always convex for any pattern $\tau\in\SS$, as $\Av ( \tau )$ is either closed for the $\oplus$ operation (whenever $\tau$ is $\oplus$-indecomposable) or closed for the $\ominus$ operation (whenever $\tau$ is $\ominus$-indecomposable).

%For some sets of patterns $B$, the region $P^{B}_k$ is not even convex.
%For instance, if $B= \{132, 213, 231, 312\}$, then $\Av(B)$ is the set of monotone permutations. Therefore, the resulting consecutive patterns feasible region is formed by two distinct points, hence it is not convex. 
%This shows that the last hypothesis in \cref{prop:inclusions} is not superfluous.

We will show later in \cref{fact:not_always_equal} that sometimes $P^{B}_k\neq P(\AValGraph[k,B])$ even if $B=\{\tau\}$ (see also the bottom part of \cref{fig:P_3_and_rest}). Note that this makes the proof of \cref{thm:dim_is_tight} less straightforward. Indeed, only the upper bound $\dim(P^{B}_k)\leq |\Av_k(B)|-|\Av_{k-1}(B)|$ can be deduced from \cref{prop:inclusions} together with \cref{thm:dim_cycle_pol}. As we will see in \cref{sec:topopro}, for the complete proof of \cref{thm:dim_is_tight} we use a new approach.

\medskip

%but we believe that the bound on the dimension of the feasible regions, given in \cref{prop:inclusions}, is tight whenever $|B|=1$.
%Indeed, we wish to show that any feasible region of this form contains a contraction of $P(\AValGraph[k,B])$, from which the following conjecture would follow:
\cref{thm:dim_is_tight} states that the regions $P^{B}_k$ are convex for every choice of $B$ such that $\Av(B)$ is closed either for the $\oplus$ operation or $\ominus$ operation.  We further believe that the following stronger result holds.

\begin{conj}\label{conj:dim_is_tight}
	Fix $k\in\Z_{\geq 1}$ and a sets of patterns $B\subset\mathcal{S}$ such that the family $\Av(B)$ is closed either for the $\oplus$ operation or $\ominus$ operation.
	The feasible region $P^{B}_k$ is a polytope.
\end{conj}

%It is natural to wonder what happens for $|B|\geq 2$.
%In the case $B=\{132, 213, 231, 312\}$ described above, the feasible region is not even convex.
%However, using the general notion of dimension in real spaces due to Hausdorff, we can still talk about the dimension of this reagion, and we have that $\dim P^{B}_k = 0$, which seems to agree with the prediction of our conjecture for $k\geq 3$.
%Despite that, it illustrates who much wilder the case $|B|\geq 2$ may get, and we prefer to state \cref{conj:dim_is_tight} only in the case $|B|=1$.
%However, when we consider $B=\mathcal{S}_{k+1}$, then $P^{B}_k = \emptyset $, which by convention has dimension $-1$, whereas we have $|\Av_k(B)|-|\Av_{k-1}(B)|= k! - (k-1)!$.

We will prove that \cref{conj:dim_is_tight} is true when $|\tau |= 3$ or when $\tau$ is a monotone pattern, i.e.\ $\tau = n \cdots 1$ or $\tau = 1 \cdots n$, for $n\in\Z_{\geq 2}$. %Furthermore, we will completely describe the feasible regions $P^{\tau}_k$ for such patterns $\tau$.
By symmetry, we only need to study the cases $\tau = 312$ and $\tau = n \cdots 1$ for $n\in\Z_{\geq 2}$. Indeed, every other permutation arises as compositions of the reverse map (symmetry of the diagram w.r.t.\ the vertical axis) and the complementation map (symmetry of the diagram w.r.t.\ the horizontal axis) of the permutations $\tau = 312$ or $\tau = n \cdots 1$ for $n\in\Z_{\geq 2}$. Beware that the inverse map (symmetry of the diagram w.r.t.\ the principal diagonal) cannot be used since it does not preserve consecutive pattern occurrences. 

\medskip

We conclude this introduction by describing precisely the polytopes $P^{312}_k$ and $P^{\monodown_n}_k$ for all $\monodown_n=n\dots1$. 

\bigskip

When $\tau = 312$ the description of the region $P^{312}_k$ is quite simple; indeed we have the following result.

\begin{thm}
\label{thm:312-avoiding}
Fix $k\in\Z_{\geq 1}$. The feasible region $P^{312}_k$ is the cycle polytope of the overlap graph $\AValGraph[k,312]$. 
%Its dimension is $C_k - C_{k-1}$, where $C_k$ is the $k$-th Catalan number, and its vertices are given by the simple cycles of $\AValGraph[k,312]$.
\end{thm}

An instance of the result above is depicted on top of \cref{fig:P_3_and_rest}.

\begin{figure}[htbp]
	\centering
	\includegraphics[scale=0.9]{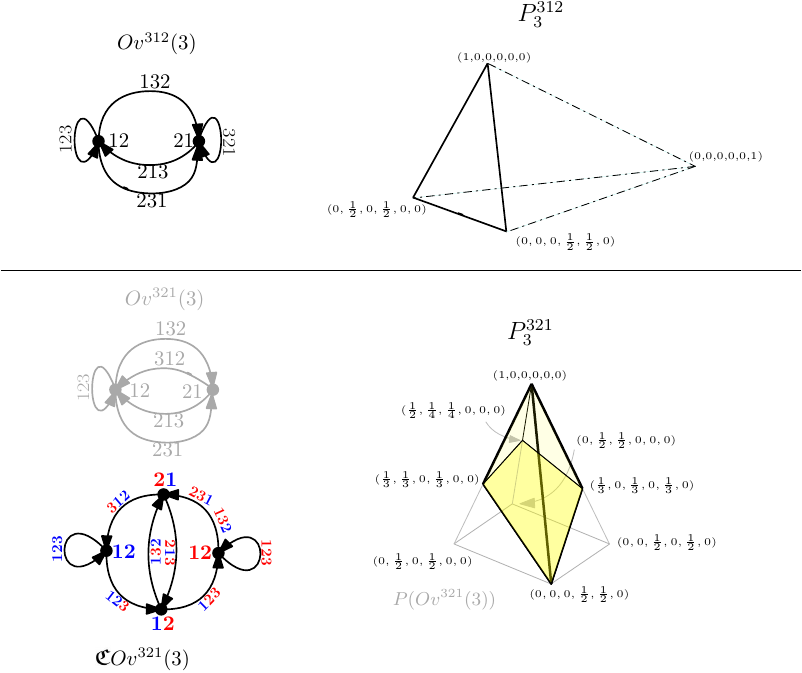}
	\captionsetup{width=\textwidth}
	\caption{We use the same conventions as in \cref{fig:P_3_and_rest2} for the coordinates of the vertices of the polytopes. \textbf{Top:} The overlap graph $\AValGraph[3,312]$ and the  three-dimensional polytope $P^{312}_3$. Note that $P^{312}_3\subset P_3$ (recall \cref{fig:P_3_and_rest2}). From \cref{thm:312-avoiding} we have that $P^{312}_3$ is the cycle polytope of $\AValGraph[3,312]$. \textbf{Bottom:} In light grey the overlap graph $\AValGraph[3,321]$ and the corresponding three-dimensional cycle polytope $P(\AValGraph[3,321])$, which is strictly larger than $P^{321}_3$. The latter feasible region is highlighted in yellow. 
	From \cref{thm:feasregmonotones*} we have that $P^{321}_k$ is the projection (defined precisely in \cref{thm:feasregmonotones}) of the cycle polytope of the coloured overlap graph $\OvMon[3, 321]$ (see \cref{defn:ov_graph_av_monotoneperm} for a precise description). This graph is plotted in the bottom-left side. Note that $P^{321}_3\subset P_3$. \label{fig:P_3_and_rest}}
\end{figure}

\bigskip 

Despite the description of the region $P^{312}_k$ is quite simple, for some patterns $\tau$, the precise description of the region $P^\tau_k$ is quite involved, as we will see in the next result. 
We fix $\monodown_n = n \cdots 1 $ for $n\in\Z_{\geq 2}$, the decreasing pattern of size $n$, and an integer $k\in \Z_{\geq 1}$.

We start with the following fact (compare this with the bottom part of \cref{fig:P_3_and_rest}), which shows that the study of the monotone case deviates significantly from the one in \cref{thm:312-avoiding}.

\begin{fact}\label{fact:not_always_equal}
 The cycle polytope $P(\AValGraph[3,321])$ is different from the feasible region $P^{321}_k$.
\end{fact}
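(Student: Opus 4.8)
The plan is to exhibit one point that lies in $P(\AValGraph[3,\Av(321)])$ but not in $P^{\Av(321)}_3$; recall from \cref{thm:upper_bound} that $P^{\Av(321)}_3\subseteq P(\AValGraph[3,\Av(321)])$, so this will show the inclusion is strict. First I would write out $\AValGraph[3,\Av(321)]$ explicitly: its vertices are $12$ and $21$, and its five edges are the loop at $12$ labelled $123$, the edges $12\to 21$ labelled $132$ and $231$, and the edges $21\to 12$ labelled $213$ and $312$ — there is no loop at $21$, since such a loop would have to be labelled by the forbidden pattern $321$. Let $\mathcal{C}$ be the simple cycle $12\xrightarrow{132}21\xrightarrow{312}12$, and set $\vec{v}\coloneqq\vec{e}_{\mathcal C}=\tfrac12(\vec{e}_{132}+\vec{e}_{312})$, which lies in $P(\AValGraph[3,\Av(321)])$ directly from the definition of the cycle polytope.

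The combinatorial core is the following elementary observation. Suppose $\sigma\in\SS_m$ has, at some position $j$, two consecutive windows with $\std(\sigma_j\sigma_{j+1}\sigma_{j+2})=312$ and $\std(\sigma_{j+1}\sigma_{j+2}\sigma_{j+3})=132$. Writing out the defining inequalities of these two patterns gives $\sigma_{j+1}<\sigma_{j+2}<\sigma_j$ and $\sigma_{j+1}<\sigma_{j+3}<\sigma_{j+2}$, hence $\sigma_j>\sigma_{j+2}>\sigma_{j+3}$: the entries in positions $j<j+2<j+3$ form an occurrence of $321$. Consequently, for no $321$-avoiding permutation does the walk $W_3(\sigma)$ in $\AValGraph[3,\Av(321)]$ ever use the edge $312$ immediately followed by the edge $132$.

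Now I would show $\vec v\notin P^{\Av(321)}_3$ by contradiction. Assume $(\sigma^m)_m\in\Av(321)^{\Z_{\geq1}}$ satisfies $|\sigma^m|\to\infty$ and $\pcoc(\pi,\sigma^m)\to\vec v_\pi$ for all $\pi\in\SS_3$; then $\pcoc(132,\sigma^m)+\pcoc(312,\sigma^m)\to 1$, so the number of windows of $\sigma^m$ whose pattern is not in $\{132,312\}$ is $o(|\sigma^m|)$. The pattern sequence of the consecutive windows of $\sigma^m$ is exactly the edge-label sequence of the walk $W_3(\sigma^m)$, and among the edges $132$ (which goes $12\to 21$) and $312$ (which goes $21\to 12$) the only admissible two-step sequences are $132$ then $312$ and $312$ then $132$; hence every maximal block of windows with patterns in $\{132,312\}$ strictly alternates between $132$ and $312$. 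Since these blocks cover all but $o(|\sigma^m|)$ of the $|\sigma^m|-2$ windows while their number is also $o(|\sigma^m|)$, for $m$ large at least one block has length $\geq 3$; any alternating block of length $\geq 3$ contains a $312$ immediately followed by a $132$, contradicting the previous paragraph and $\sigma^m\in\Av(321)$. Therefore $\vec v\in P(\AValGraph[3,\Av(321)])\setminus P^{\Av(321)}_3$, which proves the claim. The only routine point is the pigeonhole step producing a long alternating block, which is immediate since the number of blocks exceeds the number of ``bad'' windows by at most one.
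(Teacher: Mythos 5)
Your proof is correct and follows essentially the same strategy as the paper's: pick a simple $2$-cycle in $\AValGraph[3,\Av(321)]$ whose midpoint realization is obstructed by a forced local occurrence of $321$, and derive a contradiction with the hypothetical limit. The paper uses the cycle $\{231,312\}$ and finishes more directly — it observes that every $312$-window must be immediately followed by a window outside $\{231,312\}$ (the successor cannot be $231$ by $321$-avoidance, nor $312$ by the graph/ascent constraint), so the number of ``bad'' windows is at least roughly the number of $312$-windows, contradicting $\pcoc(312)\to 1/2$ together with the other proportions tending to $0$. Your version uses the mirror cycle $\{132,312\}$ and finishes with a pigeonhole argument on maximal alternating blocks; that works, but note the same direct count would work for your cycle too (after a $312$-window, the next window cannot be $132$ by your own observation, nor $312$, so it is already ``bad''), avoiding the block bookkeeping. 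Both arguments are valid; yours is just a slightly heavier route to the same contradiction.
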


\begin{proof}
	Consider the vector $\vec{v}=(0,1/2,1/2,0,0,0)$, where the coordinates of the vector correspond to the patterns $(123,231,312,213,132,321)$. We show that $\vec{v}\in P(\AValGraph[3,321])$ but $\vec{v}\notin P^{321}_k$. 
	
	Since the patterns $(231,312)$ form a simple cycle in $\AValGraph[3,321]$, by definition we get that $\vec{v}\in P(\AValGraph[3,321])$. 
	
	Now assume for sake of contradiction that $\vec{v}\in P^{321}_k$. There exists a sequence
	$(\sigma^m)_{m\in\Z_{\geq 1}} $ in ${\Av(321)}^{\Z_{\geq 1}}$ such that  $|\sigma^m| \to
	\infty$ and $\pcoc(\pi, \sigma^m ) \to \frac{\mathds{1}_{\{231,312\}}(\pi)}{2}$ for all $\pi\in \SS_3$. Consider an interval $I=\{i,i+1,i+2\}$ such that $\pat_{I}(\sigma^m)=312$ and $i+3 \leq |\sigma^{m}|$. Note that since $\sigma^m\in {\Av(321)}$ then $\pat_{\{i+1,i+2,i+3\}}(\sigma^m)\neq 231$; otherwise we would have $\pat_{\{i,i+1,i+3\}}(\sigma^m)=321$. Note also that it is not possible to have $\pat_{\{i+1,i+2,i+3\}}(\sigma^m)\neq 312$ since $\sigma^m(i+1)<\sigma^m(i+2)$. Therefore if $\pat_{I}(\sigma^m)=312$ and $i+3 \leq |\sigma^{m}|$, then $\pat_{\{i+1,i+2,i+3\}}(\sigma^m)\in \{123,213,132,321\}$. This is a contradiction with the fact that 
	\begin{equation*}
		\pcoc(312, \sigma^m ) \to 1/2,\text{ and }
		\pcoc(\pi, \sigma^m ) \to 0, \text{ for all } \pi\in \{123,213,132,321\}.\qedhere
	\end{equation*}
\end{proof}

As a consequence, the feasible region $P^{\monodown_n}_k$ does not coincide with the cycle polytope of the overlap graph $\AValGraph[k,\monodown_n]$. 
In \cref{sect:mon_avoid_case} we introduce a coloured version of the graph $\AValGraph[k,\monodown_n]$, denoted $\OvMon[k, \monodown_n]$, which helps us overcome the problem of the description of the feasible region $P^{\monodown_n}_k$ through a cycle polytope (see in particular \cref{defn:ov_graph_av_monotoneperm}).

The main result for the monotone patterns case is the following one.

\begin{thm}
	\label{thm:feasregmonotones*}
	 Fix $\monodown_n = n \cdots 1 $ for $n\in\Z_{\geq 2}$. There exists a projection map $\Pi$, explicitly described in \cref{eq:feasregmonotones}, such that the consecutive patterns feasible region $P^{\monodown_n}_k$ is the $\Pi$-projection of the cycle polytope of the coloured overlap graph $\OvMon[k, \monodown_n]$. 
	That is,
	$$P^{\monodown_n}_k = \Pi ( P (\OvMon[k, \monodown_n]) )  \, .$$
\end{thm}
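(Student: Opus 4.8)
The plan is to build on the two tools recalled from the unrestricted case, namely the cycle-polytope picture (\cref{thm:dim_cycle_pol}, \cref{thm:main_res_prev_art}) and the walk--permutation surjection (\cref{lemma:pathperm}), but now adapted to a coloured overlap graph. First I would set up the coloured graph $\OvMon[k,\monodown_n]$: its vertices should record not only a $\monodown_n$-avoiding pattern of size $k-1$, but also extra ``colour'' data tracking how close to a decreasing run the permutation is near that window (concretely, for each suffix the length of the longest decreasing sequence ending there, capped at $n-2$, or some equivalent statistic that makes $\monodown_n$-avoidance a purely local condition on the walk). Edges, labelled by $\monodown_n$-avoiding patterns of size $k$, are placed exactly when the colours are compatible, i.e.\ when gluing the two windows cannot create an occurrence of $\monodown_n$. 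The projection $\Pi$ of \cref{eq:feasregmonotones} forgets the colours: it collapses the several coloured edges sitting over a given pattern $\pi\in\SS_k$ onto a single coordinate $\vec v_\pi$, by summing the corresponding entries.

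The heart of the argument is a coloured analogue of \cref{lemma:pathperm}: the map $W_k$ refined to record colours is a bijection (or at least a surjection with controlled fibres) between sufficiently long $\monodown_n$-avoiding permutations and walks in $\OvMon[k,\monodown_n]$. Surjectivity of the refined map onto walks, together with the fact that the colour data was designed so that a walk is realizable iff the underlying pattern sequence is $\monodown_n$-avoidable, is what guarantees that every point of $P(\OvMon[k,\monodown_n])$ is hit. Then I would argue the two inclusions separately. For $P^{\Av(\monodown_n)}_k \subseteq \Pi(P(\OvMon[k,\monodown_n]))$: given a feasible sequence $(\sigma^m)$, lift each $\sigma^m$ to its coloured walk, decompose that walk's normalized edge-count vector as a convex combination of simple-cycle vectors (the edge counts of a walk differ from a convex combination of simple cycles by an $O(1/|\sigma^m|)$ error coming from the non-cyclic remainder, exactly as in \cite{borga2019feasible}), pass to the limit in the compact polytope $P(\OvMon[k,\monodown_n])$, and apply $\Pi$; continuity of $\Pi$ and the definition of $\pcoc$ give the claim. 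For the reverse inclusion: a vertex of $P(\OvMon[k,\monodown_n])$ is $\vec e_{\CCC}$ for a simple cycle $\CCC$; traversing $\CCC$ repeatedly and using the (refined) surjectivity of $W_k$ produces $\monodown_n$-avoiding permutations whose consecutive-pattern proportions converge to $\Pi(\vec e_\CCC)$, so all vertices, hence by convexity (using that $\Av(\monodown_n)$ is $\ominus$-closed, so \cref{thm:upper_bound} gives convexity of $P^{\Av(\monodown_n)}_k$, and $\Pi$ is linear) the whole projected polytope lies in $P^{\Av(\monodown_n)}_k$.

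The main obstacle I anticipate is pinning down the colour statistic so that it is simultaneously (i) finite-state, i.e.\ the graph stays finite, (ii) genuinely local, so that $\monodown_n$-avoidance of $\sigma$ is equivalent to every consecutive pair of edges in its coloured walk being a legal transition — this requires checking that no occurrence of $\monodown_n$ can ``span'' more than two adjacent windows in a way the colours fail to see, which is the real content and will need a careful case analysis on how a decreasing subsequence of length $n$ distributes across overlapping length-$(k-1)$ windows — and (iii) recoverable from $\sigma$, so that the refined $W_k$ is well defined and onto. A secondary technical point is handling the boundary/initial colours of the walk and checking that the finitely many short permutations (size $< k$, or the transient part of the walk) do not affect the limit; this is routine but must be stated. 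Once the coloured graph and the refined surjection are in place, the polytope manipulations are exactly as in the unrestricted case and need only be cited.
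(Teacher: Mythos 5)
Your overall blueprint is the right one and matches the paper's strategy: colour the entries so that $\monodown_n$-avoidance becomes a local/walk condition, build a coloured overlap graph, let $\Pi$ forget colours, then run the same cycle-decomposition and convexity argument as in the uncoloured case. Your proposed colour statistic (``length of the longest decreasing subsequence ending at each position'') is also essentially correct: this is exactly what the paper's RITMO colouring computes.

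However, there is a genuine gap in the step you flag as needing care, and the way you plan to resolve it would fail. You claim the coloured refinement of $W_k$ is ``a bijection (or at least a surjection with controlled fibres)'' onto walks in $\OvMon[k,\monodown_n]$. This is false. The coloured walk function $\Wpath[k,\monodown_n]$ records, in each window, the restriction of the global RITMO colouring of $\sigma$; but the graph's edges must be taken to be all \emph{inherited} colourings (those that arise as $\en_k(\mathbb{S}(\sigma))$ for some $\sigma$), which is strictly larger than the set of RITMO colourings of $k$-permutations. Since the first window of $\Wpath[k,\monodown_n](\sigma)$ is always $\mathbb{S}(\be_k(\sigma))$, a RITMO colouring, a walk whose first edge is an inherited-but-not-RITMO colouring (for example ${\color{blue}1}{\color{red}23}$ when $n=3$, $k=3$) is simply not in the image. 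So full surjectivity cannot hold, and your proposed equivalence ``a walk is realizable iff the underlying pattern sequence is $\monodown_n$-avoidable'' is not achievable by any finite-state colour. The paper replaces surjectivity with the weaker but sufficient \cref{lemma:pathpermmono}: every walk $w$ can be realized as a tail, $\Wpath[k,\monodown_n](\sigma)=w'\bullet w$ with $|w'|$ bounded by a constant $C(k,n)$. Proving this requires (a) restricting vertices and edges of $\OvMon[k,\monodown_n]$ to \emph{inherited} colourings (\cref{lemma:well_defn}), which you don't mention and without which the graph has spurious walks that cannot be made realizable even up to a prefix; and (b) a nontrivial extension lemma, \cref{lemma:pathpermmono_part2}, which says that under a \emph{rainbow} hypothesis on $\mathbb{C}(\sigma)$ one can append a single new value to $\sigma$ to realize any admissible next coloured edge. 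Its proof is the real technical core — it goes through structural properties of RITMO colourings (\cref{lm:RITMO_Properties}), a notion of \emph{active site} (\cref{defn_col_active_sites}, \cref{lm:ActiveSites_Properties}), and a two-interval intersection argument (\cref{lm:indexentry,lm:indexcolour}). None of this appears in your proposal, and the ``careful case analysis on how a decreasing subsequence distributes across overlapping windows'' you gesture at would, as stated, chase a statement (exact realizability) that is false. Once the almost-surjectivity lemma is in hand, your remaining polytope manipulations (lift, decompose into simple cycles with $O(1/|\sigma^m|)$ error, use convexity via $\ominus$-closure) do match the paper's proof of \cref{thm:feasregmonotones} and are fine.
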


An instance of the result stated in \cref{thm:feasregmonotones*} is depicted on the bottom part of \cref{fig:P_3_and_rest}. We remark that \cref{thm:feasregmonotones*} highlights what kind of difficulties can be encountered in proving \cref{conj:dim_is_tight}.

%\begin{thm}\label{thm:dimension}
%	The dimension of $P^{\monodown_n}_k $ is $|\Av_k(\monodown_n)| - |\Av_{k-1}(\monodown_n)|$.
%\end{thm}
%
%\begin{rem}
%	Note that \cref{thm:dim_cycle_pol} gives the dimension of the polytope  $P (\OvMon[k, \monodown_n])$, but one needs to carefully keep track of what happens in the projection in order to determine the dimension of $P^{\monodown_n}_k = \Pi ( P (\OvMon[k, \monodown_n]) )$.
%	A technique for that is developed in \cref{sect:mon_avoid_case}, where we explicitly compute what dimensions are lost after applying the projection to the original polytope.
%\end{rem}
%
%For more information on the numbers $A(n,k)=|\Av_k(\monodown_n)|$ we refer to \cite[A214015]{sloane1996line}. We just recall that a closed formula for these numbers is not available. However, thanks to the Robinson-Schensted correspondence, $A(n,k)$ is equal to $\sum_{\lambda} f_{\lambda}^2$, where the sum runs over all partitions $\lambda$ of $k$ with at most $n-1$ parts and $f_{\lambda}$ is the number of standard Young tableaux with shape $\lambda$.
%
%Note that $A(3,k)=C_k$ is the $k$-th Catalan number. Therefore, Theorems \ref{thm:312-avoiding} and \ref{thm:dimension} imply that $\dim(P^{\Av(\rho)}_k)$ does not depend on the particular permutation $\rho\in\SS_3$.

\subsection{Future projects and open questions}\label {sect:future}

We present here some open questions.

\begin{itemize}
	\item \cref{thm:312-avoiding} and \cref{thm:feasregmonotones*} give a description of the feasible regions $P^{\tau}_k$ for all patterns $\tau$ of size three. Can we describe the feasible regions $P^{B}_k$ for all subsets $B\subseteq \SS_3$? It is easy to see that $P^{B}_k\subseteq \bigcap_{\tau\in B}P^{\tau}_k$, but the reverse inclusion does not hold in general.
	
	\item It seems to be the case that the feasible region $P^{B}_k$ can be precisely described for other specific sets of patterns $B$ different from the ones already considered in this paper. In particular, we believe that a good choice would be a set of (possibly generalized) patterns $B$ for which the corresponding family $\Av(B)$ can be enumerated with generating trees. Indeed, the first author of this article has recently shown in \cite{borga2020asymptotic} that generating trees behave well in the analysis of consecutive patterns of permutations in these families. We believe that generating trees would be particularly helpful to prove some analogues of \cref{lemma:pathpermmono_part2} - that is the key lemma in the proof of \cref{thm:feasregmonotones*} - for other families of permutations.
	
	\item The main open question of this article is \cref{conj:dim_is_tight}. 
\end{itemize}

\subsection{Notation}\label{sect:notation}
We present now some notation and simple results that we will use throughout.

\textbf{Permutations and patterns. }
We recall that we denoted by $\mathcal{S}_n$ the set of permutations of size $n$, and by $\SS$ the set of all permutations.

If $x_1, \dots , x_n$ is a sequence of distinct numbers, let $\std(x_1, \dots,  x_n)$ be the unique permutation $\pi$ in $\SS_n$ whose elements are in the same relative order as $x_1, \dots,  x_n$, i.e.\ $\pi(i)<\pi(j)$ if and only if $x_i<x_j.$
Given a permutation $\sigma\in\SS_n$ and a subset of indices $I\subseteq[n]$, let $\text{pat}_I(\sigma)$ be the permutation induced by $(\sigma(i))_{i\in I}$, namely, $\text{pat}_I(\sigma)\coloneqq\std\left((\sigma(i))_{i\in I}\right).$ 
For example, if $\sigma=24637185$ and $I=\{2,4,7\}$, then $\text{pat}_{\{2,4,7\}}(24637185)=\std(438)=213$. In two particular cases, we use the following more compact notation: for $k\leq |\sigma|$,  $\be_k(\sigma)\coloneqq\text{pat}_{\{1,2,\dots,k\}}(\sigma)$ and $\en_k(\sigma)\coloneqq\text{pat}_{\{|\sigma|-k+1,|\sigma|-k+2,\dots,|\sigma|\}}(\sigma)$.

Given two permutations, $\sigma\in\mathcal{S}_n$ for some $n\in\Z_{\geq 1}$ and $\pi\in\mathcal{S}_k$ for some $k\leq n,$ and a set of indices $I=\{i_1 < \ldots < i_k\}$, we say that $\sigma(i_1) \ldots \sigma(i_k)$ is an \emph{occurrence} of $\pi$ in $\sigma$ if $\pat_I(\sigma)=\pi$ (we will also say that $\pi$ is a \emph{pattern} of $\sigma$). If the indices $i_1, \ldots ,i_k$ form an interval, then we say that $\sigma(i_1) \ldots \sigma(i_k)$ is a \emph{consecutive occurrence} of $\pi$ in $\sigma$ (we will also say that $\pi$ is a \emph{consecutive pattern} of $\sigma$).
We denote intervals of integers as $[n,m]=\{n,n+1,\dots,m\}$ for $n,m\in\Z_{\geq 1}$ with $n\leq m$.

\begin{exmp} 
	The permutation $\sigma=1532467$ contains an occurrence of $1423$ (but no such consecutive occurrences) and a consecutive occurrence of $321$. Indeed $\pat_{\{1,2,3,5\}}(\sigma)=1423$ but no interval of indices of $\sigma$ induces the permutation $1423.$ Moreover, $\pat_{[2,4]}(\sigma)=321.$
\end{exmp}

We denote by $\oc(\pi,\sigma)$ the number of occurrences of a pattern $\pi$ in $\sigma$
and by $\coc(\pi,\sigma)$ the number of consecutive occurrences of a pattern $\pi$ in $\sigma$.
Moreover, we denote by $\widetilde{\oc}(\pi,\sigma)$ (resp.\ by $\widetilde{\coc}(\pi,\sigma)$) the proportion of occurrences (resp.\ consecutive occurrences) of a pattern $\pi\in\SS_k$ in $\sigma\in\SS_n$, that is,
\begin{equation*}
\label{conpatden}
\poc(\pi,\sigma)\coloneqq\frac{\oc(\pi,\sigma)}{\binom{n}{k}}\in[0,1] , \quad \quad \pcoc(\pi,\sigma)\coloneqq\frac{\coc(\pi,\sigma)}{n}\in[0,1]\, .
\end{equation*}

\begin{rem}
	The natural choice for the denominator of the expression in the right-hand side of the equation above should be $n-k+1$ and not $n,$ but we make this choice for later convenience. Moreover, for every fixed $k,$ there is no difference in the asymptotics when $n$ tends to infinity. 
\end{rem}

For a fixed $k\in\Z_{\geq 1}$ and a permutation $\sigma\in\SS$, we let $\poc_k ( \sigma ), \pcoc_k ( \sigma ) \in [0, 1]^{\SS_k}$ be the vectors 
\[\poc_k ( \sigma )\coloneqq \left(\poc(\pi,\sigma)\right)_{\pi\in\SS_k}, \quad \quad \pcoc_k ( \sigma )\coloneqq \left(\pcoc(\pi,\sigma)\right)_{\pi\in\SS_k}\, .\]

We say that $\sigma$ \emph{avoids} $\pi$ if $\sigma$ does not contain any occurrence of $\pi$. We point out that the definition of $\pi$-avoiding permutations refers to occurrences and not to consecutive occurrences. Given a set of patterns $B\subset\mathcal{S},$ we say that $\sigma$ \emph{avoids} $B$ if $\sigma$ avoids $\pi$ for all $\pi\in B$. We denote by $\Av_n(B)$ the set of $B$-avoiding permutations of size $n$ and by $\Av(B)\coloneqq\bigcup_{n\in\Z_{\geq 1}}\Av_n(B)$ the set of $B$-avoiding permutations of arbitrary finite size. The set $\Av(B)$ is often called a \emph{permutation class}.

We also introduce two classical operations on permutations. We denote with $\oplus$ the \emph{direct sum} of two permutations, i.e.\ for $\tau\in\SS_m$ and $\sigma\in\mathcal{S}_n$, 
$$\tau\oplus\sigma=\tau(1)\dots\tau(m)(\sigma(1)+m)\dots(\sigma(n)+m)\, , $$
and we denote with $\oplus_{\ell}\,\sigma$ the direct sum of $\ell$ copies of $\sigma$ (we remark that the operation $\oplus$ is associative). A similar definition holds for the \emph{skew sum} $\ominus$, 
\begin{equation} \label{eq:skew_sum}
	\tau\ominus\sigma=(\tau(1)+n)\dots(\tau(m)+n)\sigma(1)\dots\sigma(n)\, .
\end{equation}
We say that a permutation is $\oplus$-indecomposable (resp.\ $\ominus$-indecomposable) if it cannot be written as the direct sum (resp.\ skew-sum) of two non-empty permutations.
\medskip

\textbf{Directed graphs. }

All graphs, their subgraphs and their subtrees are considered to be directed multigraphs in this paper (and we often refer to them as directed graphs or simply as graphs).
In a directed multigraph $G=(V(G),E(G))$, the set of edges $E(G)$ is a multiset, allowing for loops and parallel edges. 
An edge $e \in E(G) $ is an oriented pair of vertices, $(v, u)$, often denoted by $v \to u$. We write $\st(e)$ for the starting vertex $v$ and $\ar(e)$ for the arrival vertex $u$.
We often consider directed graphs $G$ with labelled edges, and write $\lb( e ) $ for the label of the edge $e\in E(G)$.
In a graph with labelled edges we refer to edges by using their labels.
Given an edge $e \in E(G)$, we denote by $C_{G}(e)$ (for ``set of \textit{continuations} of $e$'') the set of edges $e' \in E(G)$ such that $\st(e') = \ar(e) $.

A \textit{walk} of size $k $ on a directed graph $G$ is a sequence of $k$ edges $(e_1, \dots , e_k)\in E(G)^{k}$ such that for all $i\in [k-1]$, $\ar(e_i)=\st(e_{i+1}).$ 
A walk is a \textit{cycle} if $\st(e_{1}) = \ar(e_k)$.
A walk is a \textit{path} if all the edges are distinct, as well as its vertices, with a possible exception that $\st ( e_1) = \ar ( e_k ) $ may happen.
A cycle that is a path is called a \textit{simple cycle}. 
Given two walks $w=(e_1 , \dots , e_k)$ and $w'=(e'_1 , \dots , e'_{k'})$ such that $\ar(e_k)=\st(e'_1)$, we write $w\bullet w'$ for their concatenation, i.e.\ $w \bullet w'=(e_1 , \dots , e_k , e'_1 , \dots , e'_{k'})$.
For a walk $w$, we denote by $|w|$ the number of \emph{edges} in $w$.

Given a walk $w=(e_1 , \dots , e_k)$ and an edge $e$, we denote by $n_e(w)$ the number of times the edge $e$ is traversed in $w$, i.e.\ $n_e(w) \coloneqq | \{i\leq k|e_i=e\}| $.

%The \emph{incidence matrix} of a directed graph $G$ is the matrix $L(G)$ with rows indexed by $V(G)$, and columns indexed by $E(G)$, such that for any edge $e=v \to u$ with $v \neq u$, the corresponding column in $L(G)$ has $(L(G))_{v, e} = 1$, $ (L(G)))_{u, e} = -1$ and is zero everywhere else. Moreover, if $e=v \to v$ is a loop, the corresponding column in $L(G)$ has zero everywhere.
%
%For instance, we show in \cref{fig:triangraph} a graph $G$ with its incidence matrix $L(G)$.
%
%
%\begin{figure}[htbp]
%	\begin{center}
%		\begin{equation*}
%		G=\begin{array}{ccc}
%		\includegraphics[scale=0.55]{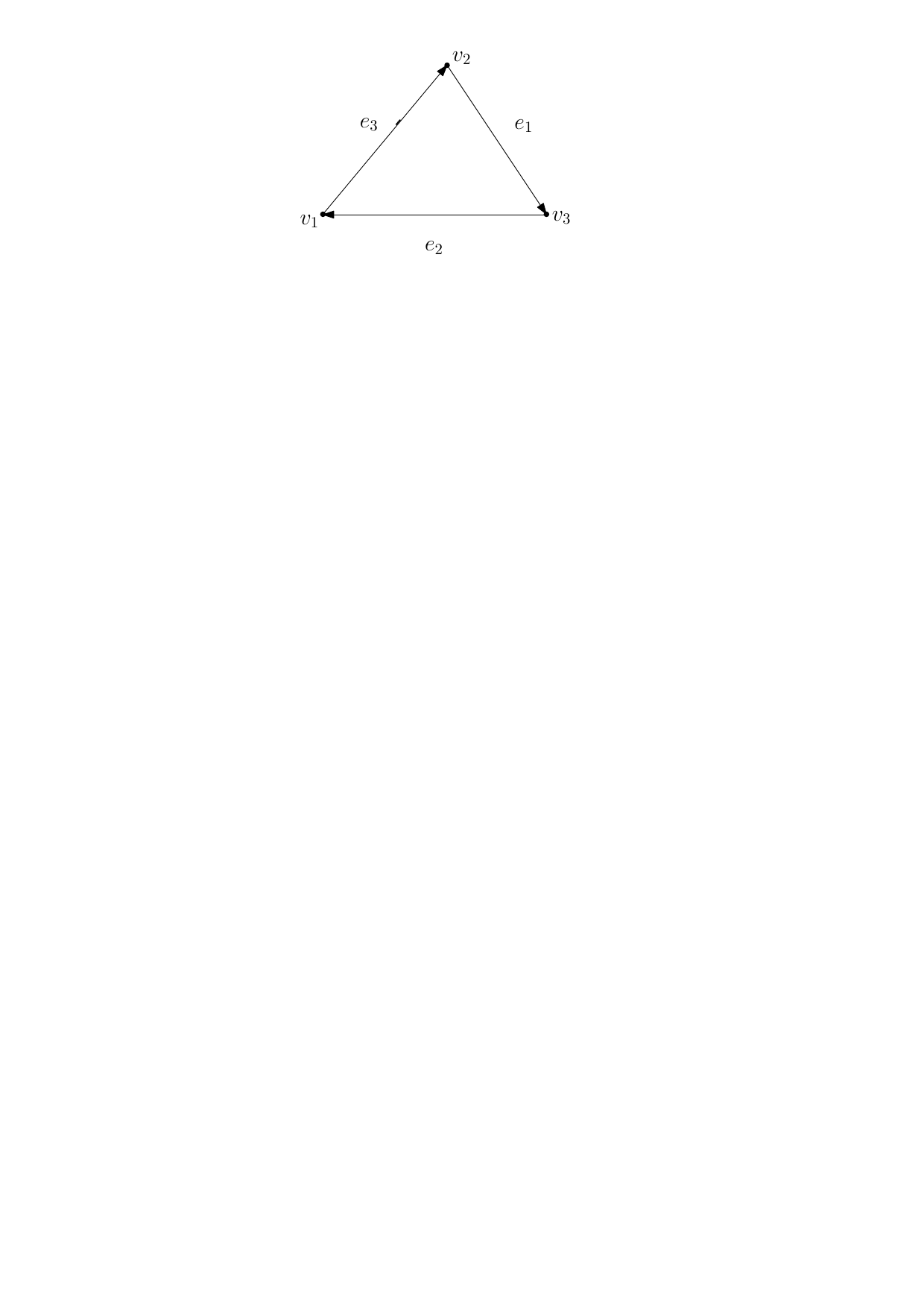}
%		\end{array},
%		\qquad L(G)=
%		\begin{blockarray}{*{3}{c} l}
%		\begin{block}{*{3}{>{$\footnotesize}c<{$}} l}
%		$e_1$ & $e_2$ & $e_3$ & \\
%		\end{block}
%		\begin{block}{[*{3}{c}]>{$\footnotesize}l<{$}}
%		0 & -1 & 1 & $v_1$ \\
%		1 & 0 & -1 & $v_2$ \\
%		-1 & 1 & 0 & $v_3$ \\
%		\end{block}
%		\end{blockarray}\,.
%		\end{equation*}\\
%		\caption{A graph $G$ with its incidence matrix $L(G)$. \label{fig:triangraph}}
%	\end{center}
%\end{figure}

\section{Topology and dimensions of the consecutive patterns feasible regions}
\label{sec:topopro}

This section is devoted to the proof of \cref{prop:inclusions} and \cref{thm:dim_is_tight}. 

\medskip

%\begin{prop}\label{prop:bnd_dim}
%	Fix $k\in\Z_{\geq 1}$. For any set of patterns $B\subset\mathcal{S}$, we have that $$P^{B}_k\subseteq P(\AValGraph[k,B])\subseteq  P_k. $$
%\end{prop}

We start with \cref{prop:inclusions}, which states that for all sets of patterns $B\subset\mathcal{S}$, the feasible region $P^{B}_k$ satisfies $P^{B}_k\subseteq P(\AValGraph[k,B]) \subseteq P_k$.
Recall the map $W_k$ associating a walk in $\ValGraph[k]$ to each permutation, defined before \cref{lemma:pathperm}.

\begin{proof}[Proof of \cref{prop:inclusions}]
	We start by proving the first inclusion. Consider any point $\vec{v}\in P^{B}_k$, and a corresponding sequence $\left(\sigma^{\ell}  \right)_{\ell\geq 0}\in\Av(B)^{\Z_{\geq 0}}$ such that $\pcoc_k(\sigma^{\ell}) \to \vec{v}$.
	Because $\sigma^{\ell}\in \Av(B)$, we know that for each $\ell $, $W_k(\sigma^{\ell})$ is a walk in $\AValGraph[k,B]$. Using the same method as in the proof that $P_k \subseteq P(\ValGraph[k])$ in \cite[Theorem 3.12]{borga2019feasible}, we can deduce that $\pcoc_k(\sigma^{\ell})$ converges to a point in $P(\AValGraph[k,B])$.
	Specifically, recall that $W(\sigma^{\ell})$ is a walk in the graph $\AValGraph[k,B]$. 
	We can show that the distance between $\pcoc_k(\sigma^{\ell})$ and $P(\AValGraph[k,B])$ goes to zero by decomposing $W(\sigma^{\ell})$ into cycles in $\AValGraph[k,B]$ and a remaining path with negligible size, and so $\vec{v}\in P(\AValGraph[k,B])$.
	Because $\vec{v}$ is generic, it follows that $P^{B}_k \subseteq P(\AValGraph[k,B])$. 
	
	The second inclusion follows from the fact that $\AValGraph[k,B]$ is a subgraph of $\AValGraph[k,]$ and from \cref{thm:main_res_prev_art}.
\end{proof}

We now turn to the proof of \cref{thm:dim_is_tight}.
We start by stating a classical consequence of the fact that $P^{B}_k$ is a set of limit points.
Here we omit the proof: for a similar proof, see \cite[Lemma 3.1]{borga2019feasible}. 

\begin{lem}\label{lem:P_n_is_closed}
Fix $k\in\Z_{\geq 1}$.  For any set of patterns $B \subseteq \SS$, the feasible region $P_k^{B}$ is a closed set.
\end{lem}

For completeness, we include a simple proof of the statement. Recall that we define $\pcoc_k ( \sigma )\coloneqq \left(\pcoc(\pi,\sigma)\right)_{\pi\in\SS_k}$.

\begin{proof}
	It suffices to show that, for any sequence $(\vec{v}_s)_{s\in\Z_{\geq 1}}$ in $P^{B}_k$ such that $\vec{v}_s\to\vec{v}$ for some $\vec{v}\in [0,1]^{\SS_k}$, we have that $\vec{v}\in P^{B}_k$. 
	For all $s\in\Z_{\geq 1}$, consider a sequence of permutations $(\sigma^m_s)_{m\in\Z_{\geq 1}}\in\Av(B)^{\Z_{\geq 1}}$ such that $|\sigma^m_s|\xrightarrow{m\to\infty}\infty$ and $\pcoc_k( \sigma^m_s)\xrightarrow{m\to\infty}\vec{v}_s$, and some index $m( s )$ of the sequence $(\sigma^m_s)_{m\in\Z_{\geq 1}}$ such that for all $m\geq m(s),$
	$$|\sigma^{m}_s|\geq s\quad\text{and}\quad ||\pcoc_k( \sigma^{m}_s)-\vec{v}_s||_2\leq\tfrac{1}{s}\, .$$
	Without loss of generality, assume that $m(s)$ is increasing. For every $\ell\in\Z_{\geq 1}$, define $\sigma^{\ell}\coloneqq\sigma^{m(\ell)}_\ell$.
	It is easy to show that 
	$$|\sigma^\ell|\xrightarrow{\ell\to\infty}\infty \quad\text{and}\quad\pcoc_k(\sigma^\ell)\xrightarrow{\ell\to\infty}\vec{v}\, ,$$ 
	where we use the fact that $\vec{v}_s\to\vec{v}$. Furthermore, by assumption we have that $\sigma^{\ell} \in \Av(B)$. 
	Therefore $\vec{v}\in P^{B}_k$.
\end{proof}

Next we prove the convexity of $P^{B}_k$ stated in \cref{thm:dim_is_tight}.

\begin{prop}\label{prop:convexity_312}
Fix $k\in\Z_{\geq 1}$. Consider a set of patterns $B\subset\mathcal{S}$ such that the class $\Av(B)$ is closed for one of the two operations $\oplus, \ominus$.
Then, the feasible region $P^{B}_k$ is convex.
\end{prop}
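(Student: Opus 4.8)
The plan is to mimic the proof of convexity of $P_k$ from \cite[Proposition 3.2]{borga2019feasible}, adapting it so that the permutations we build to realize a convex combination stay inside the class $\Av(B)$. Assume without loss of generality that $\Av(B)$ is closed under $\oplus$ (the $\ominus$ case is symmetric, via the complementation map, which preserves consecutive pattern proportions). Fix two points $\vec{v}, \vec{w}\in P^{\Av(B)}_k$ and $\lambda\in[0,1]$; I want to show $\lambda\vec v + (1-\lambda)\vec w\in P^{\Av(B)}_k$. Pick sequences $(\sigma^m)_{m}$ and $(\rho^m)_{m}$ in $\Av(B)$ with sizes tending to infinity and $\pcoc_k(\sigma^m)\to\vec v$, $\pcoc_k(\rho^m)\to\vec w$.

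The key construction is, for suitable integers $a_m, b_m$, to form the permutation
\[
\theta^m \;\coloneqq\; \big(\oplus_{a_m}\sigma^m\big)\;\oplus\;\big(\oplus_{b_m}\rho^m\big),
\]
which lies in $\Av(B)$ because $\Av(B)$ is $\oplus$-closed and $\sigma^m,\rho^m\in\Av(B)$. Since each $\oplus$ juxtaposition of two blocks of sizes $s,t$ creates at most $k-1$ new consecutive occurrences of size-$k$ patterns straddling the break (and changes no occurrence lying strictly inside a block), one has
\[
\coc(\pi,\theta^m)\;=\;a_m\,\coc(\pi,\sigma^m)\;+\;b_m\,\coc(\pi,\rho^m)\;+\;O\!\big((a_m+b_m)\cdot k\big),
\]
uniformly in $\pi\in\SS_k$, where the error counts the $O(k)$ patterns near each of the $a_m+b_m-1$ block boundaries. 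Dividing by $|\theta^m| = a_m|\sigma^m| + b_m|\rho^m|$ and choosing $a_m,b_m$ so that $a_m|\sigma^m| / |\theta^m|\to\lambda$, $b_m|\rho^m|/|\theta^m|\to 1-\lambda$, while also $a_m/|\sigma^m|\to 0$ and $b_m/|\rho^m|\to 0$ (e.g.\ $a_m = \lceil \lambda\, t_m/|\sigma^m|\rceil$, $b_m = \lceil (1-\lambda)\, t_m/|\rho^m|\rceil$ for $t_m$ growing like $|\sigma^m|^{3/2}+|\rho^m|^{3/2}$, after first passing to a subsequence to control the sizes), the error term vanishes in the limit and $\pcoc_k(\theta^m)\to\lambda\vec v + (1-\lambda)\vec w$. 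Also $|\theta^m|\to\infty$, so the convex combination lies in $P^{\Av(B)}_k$.

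The last sentence of the statement follows immediately: for any $\tau\in\SS$, either $\tau$ is $\oplus$-indecomposable — in which case $\Av(\tau)$ is $\oplus$-closed, since gluing two $\tau$-avoiding permutations by $\oplus$ cannot create an occurrence of an indecomposable $\tau$ straddling the join — or $\tau$ is $\ominus$-indecomposable, and then by the symmetric argument $\Av(\tau)$ is $\ominus$-closed; either way the first part applies. The only real point requiring care is the bookkeeping of the error term: one must check that the $O(k)$ extra consecutive patterns per boundary really do not interact (which holds because patterns of size $k$ confined to within $k-1$ of a boundary are what we discard) and that the number of boundaries $a_m+b_m$ grows slowly relative to $|\theta^m|$, which is exactly what the choice of $a_m,b_m$ guarantees. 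This is the main (though routine) obstacle; everything else is a direct transcription of the non-avoiding case.
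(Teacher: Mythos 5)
Your argument is correct and follows essentially the same approach as the paper: build $\theta^m$ by an $\oplus$-concatenation of many copies of the two sequences, then control the $O(k)$ error per block boundary. The only bookkeeping difference is that the paper first invokes the already-proved closedness of $P^{\Av(B)}_k$ to reduce to rational combinations $\lambda = s/(s+t)$, which lets it fix the clean multiplicities $s_\ell = s|\sigma^\ell_2|$, $t_\ell = t|\sigma^\ell_1|$ once and for all, whereas you handle arbitrary $\lambda$ directly with asymptotic choices of $a_m,b_m$ (and in fact the extra condition $a_m/|\sigma^m|\to 0$ you impose is unnecessary, since $(a_m+b_m)/(a_m|\sigma^m|+b_m|\rho^m|)\le 1/|\sigma^m|+1/|\rho^m|\to 0$ for any choice).
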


\begin{proof}
	We will present a proof for the case where $\Av(B)$ is closed for the $\oplus$ operation, however the arguments hold equally for the $\ominus$ operation.
	
	Since $P^{B}_k$ is a closed set (by \cref{lem:P_n_is_closed}) it is enough to consider rational convex combinations of points in $P^{B}_k$, i.e.\ it is enough to establish that for all $\vec{v}_1,\vec{v}_2\in P^{B}_k$ and all $ s,t\in\Z_{\geq 1}$, we have that
	\begin{equation*}
		\frac{s}{s+t}\vec{v}_1+\frac{t}{s+t}\vec{v}_2\in P^{B}_k.
	\end{equation*}
	Fix $\vec{v}_1,\vec{v}_2\in P^{B}_k$ and $s,t\in\Z_{\geq 1}$. Since $\vec{v}_1,\vec{v}_2\in P^{B}_k$, there exist two sequences $(\sigma^\ell_1)_{\ell\in\Z_{\geq 1}}$, $(\sigma^\ell_2)_{\ell\in\Z_{\geq 1}}$ such that $|\sigma^\ell_i|\xrightarrow{\ell\to\infty}\infty$, $\sigma^\ell_i \in \Av(B) $ and $\pcoc_k( \sigma^\ell_i)\xrightarrow{\ell\to\infty}\vec{v}_i$, for $i=1,2$.
	
	Define $t_\ell\coloneqq t\cdot |\sigma^\ell_1|$ and $s_\ell\coloneqq s\cdot |\sigma^\ell_2|$. 
%	\begin{figure}[htbp]
%		\begin{center}
%			\includegraphics[scale=.5]{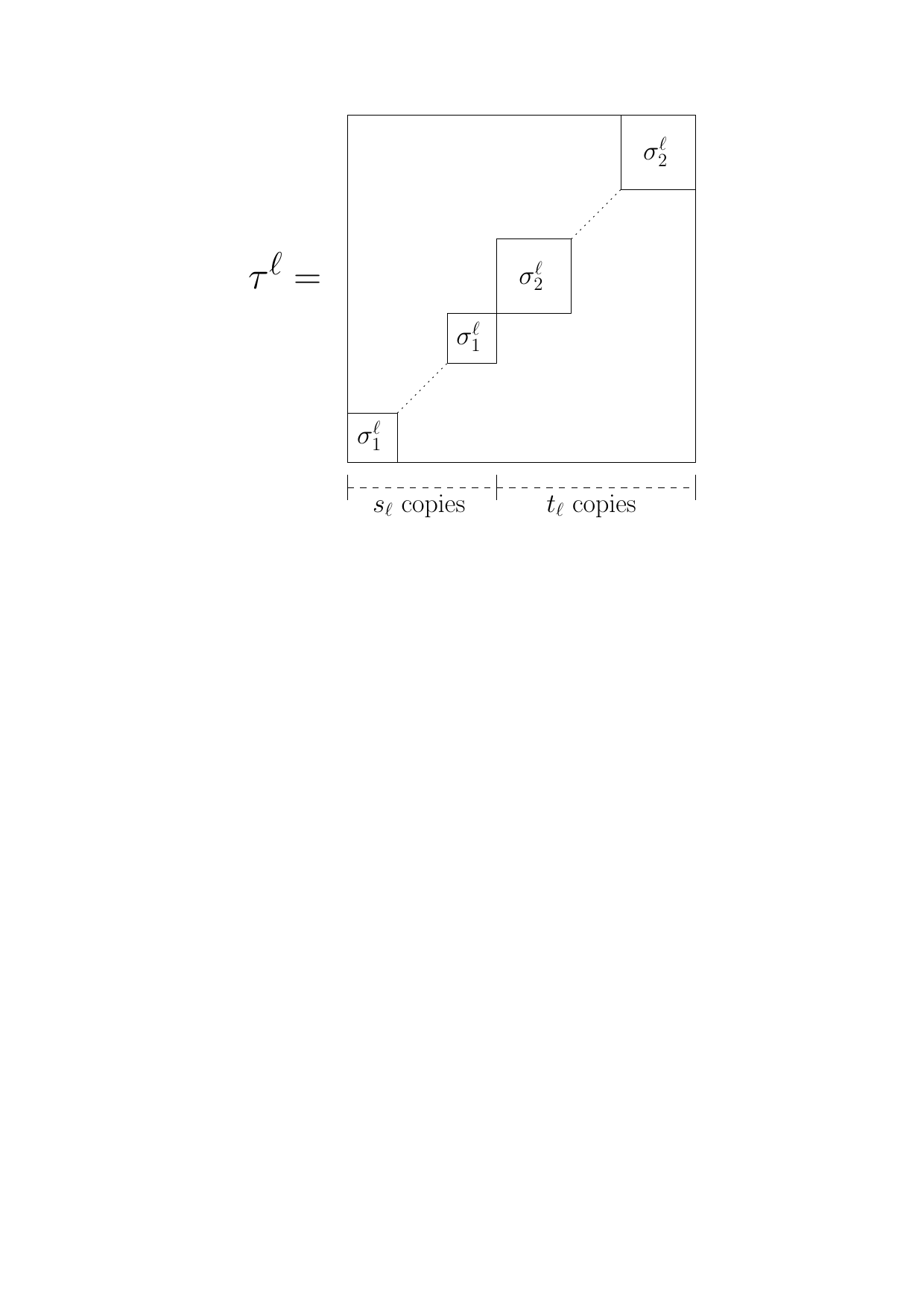}\\
%			\caption{Schema for the definition of the permutation $\tau^\ell$. \label{schema_direct_sum}}
%		\end{center}
%	\end{figure}	
	We set $\tau^\ell\coloneqq \left(\oplus_{s_\ell}\,\sigma^\ell_1\right)\oplus\left(\oplus_{t_\ell}\,\sigma^\ell_2\right)$. %For a graphical interpretation of this construction we refer to \cref{schema_direct_sum}.
	We note that for every $\pi\in\SS_k$, we have
	\begin{equation*}
		\coc(\pi,\tau^\ell)=s_\ell\cdot\coc(\pi,\sigma^\ell_1)+t_\ell\cdot\coc(\pi,\sigma^\ell_2)+Er,
	\end{equation*}
	where $Er\leq(s_\ell+t_\ell-1)\cdot |\pi|$. This error term comes from the number of intervals of size $|\pi|$ that intersect the boundary of some copies of $\sigma^\ell_1$ or $\sigma^\ell_2$. Hence
	\begin{equation*}
	\begin{split}
	\pcoc(\pi,\tau^\ell)&=\frac{s_\ell\cdot|\sigma^\ell_1|\cdot\pcoc(\pi,\sigma^\ell_1)+t_\ell\cdot|\sigma^\ell_2|\cdot\pcoc(\pi,\sigma^\ell_2)+Er}{s_\ell\cdot|\sigma^\ell_1|+t_\ell\cdot |\sigma^\ell_2|}\\
	&=\frac{s}{s+t}\pcoc(\pi,\sigma^\ell_1)+\frac{t}{s+t}\pcoc(\pi,\sigma^\ell_2)+O\left(|\pi|\left(\tfrac{1}{|\sigma^\ell_1|}+\tfrac{1}{|\sigma^\ell_2|}\right)\right).
	\end{split}
	\end{equation*}
	As $\ell$ tends to infinity, we have $$\pcoc_k(\tau^\ell)\to\frac{s}{s+t}\vec{v}_1+\frac{t}{s+t}\vec{v}_2,$$ 
	since $|\sigma^\ell_i|\xrightarrow{\ell\to\infty}\infty$ and $\pcoc_k( \sigma^\ell_i)\xrightarrow{m\to\infty}\vec{v}_i$, for $i=1,2$. Noting also that
	$|\tau^\ell|\to\infty,$
	we can conclude that $\tfrac{s}{s+t}\vec{v}_1+\tfrac{t}{s+t}\vec{v}_2\in P^{B}_k$. This ends the proof of the first part of the statement. 
\end{proof}

We now prove a result that gives an upper bound on the dimension of $P_k^B$.

\begin{prop}\label{prop:strongly_connected}
Fix $k\in\Z_{\geq 1}$ and a set of patterns $B\subset\mathcal{S}$ such that the class $\Av(B)$ is closed for one of the two operations $\oplus, \ominus$.
Then the graph $\AValGraph[k,B] $ is strongly connected and $\dim(P(\AValGraph[k,B])) =  |\Av_k(B)|-|\Av_{k-1}(B)|$.
\end{prop}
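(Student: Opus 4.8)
The plan is to derive the dimension formula from \cref{thm:dim_cycle_pol} once strong connectivity is in hand, so the substantive part is to prove that $\AValGraph[k,\Av(B)]$ is strongly connected. Note that $\AValGraph[k,\Av(B)]$ has vertex set $\Av_{k-1}(B)$ and exactly one edge for each element of $\Av_k(B)$; so, granting strong connectivity, \cref{thm:dim_cycle_pol} immediately yields $\dim\big(P(\AValGraph[k,\Av(B)])\big)=|\Av_k(B)|-|\Av_{k-1}(B)|$, and the ``in particular'' clause follows formally once we know $\Av(\tau)$ is closed under $\oplus$ or $\ominus$.

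To prove strong connectivity, assume $\Av(B)$ is closed under $\oplus$ (the $\ominus$ case being identical with $\oplus$ replaced by $\ominus$ throughout), and assume $k\geq 2$, since for $k=1$ the graph $\AValGraph[1,\Av(B)]$ is a single vertex with one loop. Fix vertices $\pi_1,\pi_2\in\Av_{k-1}(B)$ and put $\sigma\coloneqq\pi_1\oplus\pi_2$. Then $\sigma\in\Av(B)$ by closure under $\oplus$, and $|\sigma|=2(k-1)\geq k$. Two elementary facts finish the argument. First, every consecutive pattern of $\sigma$ is a pattern of $\sigma$, hence avoids $B$, so the walk $W_k(\sigma)$ associated with $\sigma$ (defined just before \cref{lemma:pathperm}) is in fact a walk in the subgraph $\AValGraph[k,\Av(B)]$ and not merely in $\ValGraph[k]$. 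Second, by transitivity of standardization $W_k(\sigma)$ starts at the vertex $\be_{k-1}(\sigma)$ and ends at the vertex $\en_{k-1}(\sigma)$, and since $|\pi_1|=|\pi_2|=k-1$ the first (resp.\ last) $k-1$ entries of $\pi_1\oplus\pi_2$ standardize to $\pi_1$ (resp.\ $\pi_2$). Hence $W_k(\sigma)$ is a directed walk from $\pi_1$ to $\pi_2$ in $\AValGraph[k,\Av(B)]$; as $\pi_1,\pi_2$ were arbitrary, the graph is strongly connected, and \cref{thm:dim_cycle_pol} applies.

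For the ``in particular'' clause it suffices to check that every $\tau\in\SS$ is $\oplus$-indecomposable or $\ominus$-indecomposable. Indeed, a non-trivial $\oplus$-decomposition of $\tau$ at some position $j<|\tau|$ means $\tau([1,j])=[1,j]$ as value sets, while a non-trivial $\ominus$-decomposition at some position $i<|\tau|$ means $\tau([1,i])=[|\tau|-i+1,|\tau|]$; comparing these, and using that one of the initial segments $[1,i],[1,j]$ contains the other, forces $i\geq|\tau|$ or $j\geq|\tau|$, contradicting $i,j<|\tau|$. Thus $\Av(\tau)$ is closed under $\oplus$ when $\tau$ is $\oplus$-indecomposable and under $\ominus$ when $\tau$ is $\ominus$-indecomposable, so the first part of the proposition applies to $\AValGraph[k,\Av(\tau)]$.

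I do not expect a serious obstacle: the argument rests on a single well-chosen permutation ($\pi_1\oplus\pi_2$) together with \cref{thm:dim_cycle_pol}. The only points requiring care are (a) verifying that $W_k(\sigma)$ stays inside $\AValGraph[k,\Av(B)]$, which is exactly the elementary closure of pattern classes under taking patterns, and (b) the degenerate small cases ($k=1$, or $\Av_k(B)=\emptyset$, where the statement is vacuous). All remaining steps are routine bookkeeping with standardizations.
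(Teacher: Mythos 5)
Your proof is correct and takes essentially the same approach as the paper: strong connectivity is obtained by connecting vertices $\pi_1$ and $\pi_2$ via the walk $W_k(\pi_1\oplus\pi_2)$ (or $W_k(\pi_1\ominus\pi_2)$), and the dimension then follows directly from \cref{thm:dim_cycle_pol}. You add slightly more bookkeeping than the paper — the $k=1$ and $\Av_k(B)=\emptyset$ degenerate cases, the size check $2(k-1)\ge k$, and a short argument that no $\tau$ is simultaneously $\oplus$- and $\ominus$-decomposable — but the core idea is identical.
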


\begin{proof}
Consider $v_1, v_2$ two vertices of $\AValGraph[k,B]$, and assume that $\Av(B)$ is closed for $\oplus$, for simplicity.
Then $\lb(v_1) \oplus \lb(v_2)$ is a permutation in $\Av(B)$, so $W_k(\lb(v_1) \oplus \lb(v_2))$ is a walk in the graph $\AValGraph[k,B]$ that connects $v_1$ to $v_2$.
We conclude that $ \AValGraph[k,B] $ is strongly connected.
It follows from \cref{thm:dim_cycle_pol} that 
\begin{equation*}
	\dim(P(\AValGraph[k,B])) =  |\Av_k(B)|-|\Av_{k-1}(B)|.\qedhere
\end{equation*}
\end{proof}

We now fix a set of patterns $B\subset\mathcal{S}$ such that the class $\Av(B)$ is closed under the $\oplus$ operation (the other case is similar). Note that thanks to Propositions \ref{prop:inclusions} and \ref{prop:strongly_connected} we have that 
$\dim(P^{B}_k)\leq |\Av_k(B)|-|\Av_{k-1}(B)|$.

In order to prove \cref{thm:dim_is_tight}, it remains to show that
\begin{equation}\label{eq:goal_of_the_article}
	\dim(P^{B}_k)\geq |\Av_k(B)|-|\Av_{k-1}(B)|.
\end{equation}
Our strategy to prove \cref{eq:goal_of_the_article} is to show that there exists a portion of the polytope $P(\AValGraph[k,B])$  of full dimension $|\Av_k(B)|-|\Av_{k-1}(B)|$ that is contained in $P^{B}_k$. 
We start by explicitly describing this portion. 

Recall first that from \cite[Proposition 2.2]{borga2019feasible}, if $G=(V,E)$ is a directed multigraph then the vertices of the polytope $P(G) $ are precisely the vectors $$\{\vec{e}_{\mathcal{C}} | \, \mathcal{C} \text{ is a simple cycle of } G \}.$$

Consider the vertex $\vec{e}_{\ell}$ of $P(\AValGraph[k,B])$ corresponding to the loop $\ell$ in $\AValGraph[k, B]$ given by the increasing permutation $\iota_k=1\dots k$ (here we are using the fact that $B$ is closed under $\oplus$).

Let $\mathcal N \mathcal E_k$ be the set of permutations $\sigma$ in $\Av_k(B)$ such that $\sigma(k)\neq k$ ($\mathcal N \mathcal E_k$ stands for \emph{not ending with} $k$ but also recalls that the permutations in $\mathcal N \mathcal E_k$ have size $k$). 
For $\pi\in \mathcal N \mathcal E_k$, we set 
$$\sigma_n(\pi)\coloneqq\oplus_n(\pi\oplus\iota_{k}),\quad\text{for all}\quad n\in\Z_{>0}\qquad \text{and}\qquad\vec{p}(\pi)\coloneqq\lim_{n\to \infty}\pcoc_k(\sigma_n(\pi)).$$

We show that the limit is well defined.

\begin{lem}\label{lem:pos_entr}
	For every $\pi\in \mathcal N \mathcal E_k$, the limit $\vec{p}(\pi)=\lim_{n\to \infty}\pcoc_k(\sigma_n(\pi))$ exists and it satisfies
	$$\vec{p}(\pi)=\frac{\coc_{k}(\pi\oplus\iota_{k}\oplus\pat_{k-1}(\pi))}{2k}.$$	
	In particular, $(\vec{p}(\pi))_{\pi}\neq0$.
	%	there exist $\alpha_\pi,\beta_\pi\in(0,1)$ %with $\alpha_\pi + \beta_\pi = 1$ 
	%	such that
	%	$$\vec{p}(\pi)=\alpha_\pi \vec{e}_{\pi} + \beta_\pi \vec{e}_{\ell}+\sum_{\rho\in\mathcal S_k\setminus\{\pi,\ell\}}\gamma_{\pi \vec{e}_{\rho}$$
\end{lem}

\begin{proof}
	Note that for any $\rho\in\mathcal S_k$,
	\begin{multline*}
		\pcoc(\rho,\sigma_n(\pi))=\pcoc(\rho,\oplus_n(\pi\oplus\iota_{k}))\\
		=\frac{(n-1)\cdot\coc(\rho,\pi\oplus\iota_{k}\oplus\pat_{k-1}(\pi))}{2k\cdot n}+\frac{\coc(\rho,\pi\oplus\iota_{k})}{2k\cdot n}\\
		\to\frac{\coc(\rho,\pi\oplus\iota_{k}\oplus\pat_{k-1}(\pi))}{2k}.
	\end{multline*}
	See \cref{fig:pi} to clarify the decomposition of $\pcoc(\rho,\oplus_n(\pi\oplus\iota_{k}))$.
	Obviously if $\rho=\pi$ then $\coc(\rho,\pi\oplus\iota_{k}\oplus\pat_{k-1}(\pi))\geq 1$ and so $(\vec{p}(\pi))_{\pi}\neq0$.
\end{proof}

\begin{figure}
	\begin{minipage}[c]{0.49\textwidth}
		\centering
		\includegraphics[scale=0.25]{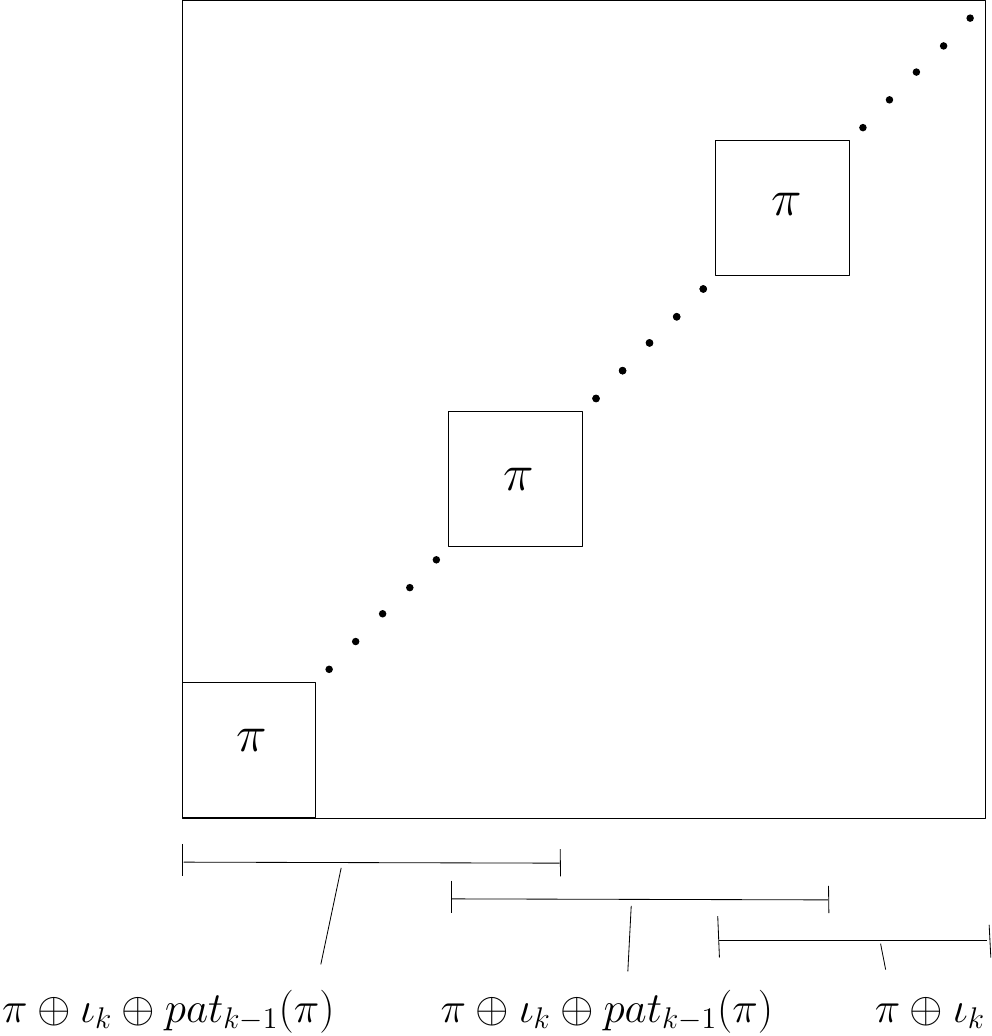}
	\end{minipage}\hfill
	\begin{minipage}[c]{0.5\textwidth}
		\captionsetup{width=\textwidth}
		\caption{Any consecutive pattern of size $k$ of $\sigma^n(\pi)$  will fit in exactly one of the highlighted intervals of indexes.\label{fig:pi}}
	\end{minipage}
\end{figure}

The following proposition describes the portion of $P(\AValGraph[k,B])$ that is contained in $P^{B}_k$.

\begin{prop}\label{prop:portion_contained} 
	The polytope $\conv\left(\{\vec{p}(\pi):\pi\in \mathcal N \mathcal E_k\}\cup\{\vec{e}_{\ell}\}\right)$ is contained inside $P^{B}_k$.	
\end{prop}

\begin{proof}
	Since $P^{B}_k$ is convex thanks to \cref{prop:convexity_312}, it is enough to show that 
	\begin{itemize}
		\item $\vec{e}_{\ell} \in P^{B}_k$.
		\item $\vec{p}(\pi)\in P^{B}_k$, for every  $\pi\in \mathcal N \mathcal E_k$.
	\end{itemize}
	The first claim follows from the fact that $B$ is closed under $\oplus$ and therefore the increasing permutations $(\iota_m)_{m\in\Z_{>0}}$ avoid $B$ and satisfy  $\pcoc_k(\iota_m) \to \vec{e}_{\ell}$.
	For the second claim, it is enough to note that $\sigma_m(\pi)=\oplus_m(\pi\oplus\iota_{k})\in\Av(B)$ for all $m\in\Z_{>0}$ (where we are using again that $B$ is closed under $\oplus$) and recall the definition of $\vec{p}(\pi)$.
\end{proof}

%\begin{lm}
%%Let $\mathcal N \mathcal E_k$ be the set of permutations $\tau$ in $\Av_k(\pi)$ such that $\tau(k) \neq k$.
%The polytope resulting from the convex hull of $\vec{e}_{\ell}$ and $\{\alpha_e \vec{e}_{\CCC_e} + \beta_e \vec{e}_{\ell}\}_{e \in \mathcal N \mathcal E_k}$, where the coefficients $\{\alpha_e\}_{e \in \mathcal N \mathcal E_k} $ and $\{\beta_e\}_{e \in \mathcal N \mathcal E_k} $ are arbitrary such that $\alpha_e + \beta_e = 1 $ and $0 < \alpha_e \leq 1$ has dimension at least $|\mathcal N \mathcal E_k|$.
%\end{lm}

The following proposition guarantees that the polytope 
$$P\coloneqq\conv\left(\{\vec{p}(\pi):\pi\in \mathcal N \mathcal E_k\}\cup\{\vec{e}_{\ell}\}\right)$$
has the correct dimension that we need to prove \cref{eq:goal_of_the_article}.

\begin{prop}\label{prop:portion_dimension}
	Let $P = \conv\left(\{\vec{p}(\pi):\pi\in \mathcal N \mathcal E_k\}\cup\{\vec{e}_{\ell}\}\right)$. The following lower bound holds 
	\begin{equation}
		\dim\left( P \right)\geq|\Av_k(B)|-|\Av_{k-1}(B)|.
	\end{equation}
\end{prop}

\begin{proof}
	We start by defining a partial order $\prec $ on $\mathcal N \mathcal E_k$.
	For $\tau_1, \tau_2\in \mathcal N \mathcal E_k $ we say that $\tau_1 \prec \tau_2$  if $\tau_2 = \pat_{ [ 1 , k ] }(\iota_m \oplus \tau_1) $ for some $m\in\Z_{\geq 0}$.
	This relation is clearly transitive and reflexive.
	
	To observe that it is also anti-symmetric notice that if $\tau_1 \prec \tau_2$ and $\tau_2 \prec \tau_1$ then there exist two integers $m_1, m_2\in\Z_{\geq 0}$ such that $\tau_2 = \pat_{ [ 1 , k ] }(\iota_{m_1} \oplus \tau_1 )$ and $\tau_1 = \pat_{ [ 1 , k ] }(\iota_{m_2} \oplus \tau_2 )$.
	Hence $\tau_2 =\pat_{ [ 1 , k ] }(\iota_{m_1+m_2} \oplus \tau_2 )$. Now one can see that if $m_1+m_2> 0$ then the only solution to this equation is $\tau_2=\iota_k$, but this is not possible because $\tau_2 \in \mathcal N \mathcal E_k$. Therefore $m_1=m_2=0$ and so $\tau_1=\tau_2$.
	
	Thus, $\prec$ defines a partial order.
	
	\medskip
	
	Now, consider the collection of linear functionals $f_\rho\in \left(\mathbb{R}^{\mathcal S_k}\right)^*$ defined for all $\rho\in\mathcal{S}_k$ by 
	$$f_\rho(\vec{e}_{\pi}) =\delta_{\rho, \pi},\quad\text{for all}\quad \pi\in\mathcal{S}_k,$$ 
	where $\delta$ denotes the Kronecker delta.
	We also fix an (arbitrary) extension of the partial order $\prec$ to a total order and we define the following matrix
	$$A = \Big( f_\rho\left(\vec{p}(\pi)  - \vec{e}_{\ell} \right) \Big)_{\rho,\pi \in \mathcal N \mathcal E_k} =  \Big( f_\rho\left(\vec{p}(\pi) \right) \Big)_{\rho,\pi \in \mathcal N \mathcal E_k}.$$
	Because $(\vec{p}(\pi))_{\pi\in\mathcal N \mathcal E_k},  \vec{e}_{\ell} $ are in $P$ and so also in the affine span of $P$, we have that the dimension of $P$ is bounded below by the dimension of 
	$$\mathrm{span}\{\vec{p}(\pi)  - \vec{e}_{\ell} |  \pi \in \mathcal N \mathcal E_k \}\, .$$
	This dimension is bounded below by the rank of $A$.
	It suffices then to show that $A$ is upper-triangular with non-zero elements in the diagonal, showing that it is full rank. Indeed, using that $|\mathcal N \mathcal E_k|=|\Av_k(B)|-|\Av_{k-1}(B)|$, this would conclude the proof.
	
	%Let us show that $A$ is upper-triangular with non-zero elements in the diagonal.
	
	First, on the diagonal, we have that $ f_\pi(\vec{p}(\pi))  = (\vec{p}(\pi))_\pi \neq 0$ by \cref{lem:pos_entr}.
	
	On the other hand, if $\rho, \pi \in \mathcal N \mathcal E_k$, $\rho\neq \pi$, and $f_\rho(\vec{p}(\pi))=(\vec{p}(\pi))_\rho$ is non-zero, then we must have $\coc(\rho,\pi\oplus\iota_{k}\oplus\pat_{k-1}(\pi))\geq 1$, but because $\rho\in \mathcal N \mathcal E_k$, it is immediate to see that there exists some $m\in\Z_{>0}$ such that $\rho = \pat_{[1, k]}(\iota_m \oplus \pi)$ and so $\pi\prec \rho $.
\end{proof}

%$$A = \Big( f_e\left(\alpha_{e'} \vec{e}_{\CCC_{e'}} + (\beta_{e'}  - 1 )\vec{e}_{\ell} \right) \Big)_{e, e' \in \mathcal N \mathcal E_k}.$$
%Because $\alpha_e \vec{e}_{\CCC_e} + \beta_e\vec{e}_{\ell} $ and $\vec{e}_{\ell} $ are vertices of the polytope\todo{Not needed right?}, we have that the dimension of the polytope is bounded below by the rank of $A$.
%It suffices then to show that $A$ is upper-triangular with non-zero elements in the diagonal, showing that it is full rank.
%
%Indeed, if we extend $\prec$ to a total order, we will have in $A$ an upper-triangular matrix.
%
%First, noting that $e \in \CCC_e$, we have that $ f_e(\alpha_e \vec{e}_{\CCC_e} + (\beta_e  - 1 )\vec{e}_{\ell} )  = \frac{\alpha_e }{|\CCC_e|} \neq 0$ because $\ell=1\dots k\notin N \mathcal E_k$ and $(\vec{e}_{\CCC_e})_e=\frac{1}{|\CCC_e|}$ since $\CCC_e$ is a simple cycle containing $e$.
%
%On the other hand, if $e, e' \in \mathcal N \mathcal E_k$, $e\neq e'$, and $f_e(\alpha_{e'} \vec{e}_{\CCC_{e'}} + (\beta_{e'}  - 1 )\vec{e}_{\ell} )$ is non-zero, then we must have $e' \in \CCC_e$, but because $e\in \mathcal N \mathcal E_k$ it is imediate to see that there exists some $m\in\Z_{>0}$ such that $e = \pat_{[1, k]}(1^{\oplus m} \oplus e')$.

Propositions \ref{prop:portion_contained} and \ref{prop:portion_dimension} prove \cref{eq:goal_of_the_article} and complete the proof of \cref{thm:dim_is_tight}.

%\begin{proof}[Proof of \cref{thm:dim_is_tight}]
%	Recall that thanks to \cref{prop:inclusions}, it is enough to prove that
%	\begin{equation}
%		\dim(P^{\tau}_k)\geq |\Av_k(\tau)|-|\Av_{k-1}(\tau)|.
%	\end{equation}
%	This bound follows from Propositions \ref{prop:portion_contained} and \ref{prop:portion_dimension}, where we showed that
%	\begin{equation}
%		\conv\left(\{\vec{p}(\pi):e\in \mathcal N \mathcal E_k\}\cup\{\vec{e}_{\ell}\}\right)\subseteq P^{\tau}_k,
%	\end{equation}
%	and that
%	\begin{equation}
%		\dim\left(\conv\left(\{\vec{p}(\pi):e\in \mathcal N \mathcal E_k\}\cup\{\vec{e}_{\ell}\}\right)\right)=|\Av_k(\tau)|-|\Av_{k-1}(\tau)|.
%	\end{equation}
%\end{proof}

\section{The feasible region for 312-avoiding permutations}\label{sect:312-avoiding}

This section is devoted to the proof of \cref{thm:312-avoiding}. 
The key step in this proof is to show an analogue of \cref{lemma:pathperm} for $312$-avoiding permutations. More precisely, we have the following.

\begin{lem}\label{lemma:pathperm312}
	Fix $k\in\Z_{\geq 1}$ and $m\geq k$. The map $W_k$, from the set $\Av_{m}(312)$ of 312-avoiding permutations of size $m$ to the set of walks in $\AValGraph[k,312]$ of size $m-k+1$, is surjective.	
\end{lem}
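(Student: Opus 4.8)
The plan is to construct, given a walk $w=(e_1,\dots,e_{m-k+1})$ in $\AValGraph[k,\Av(312)]$, an explicit $312$-avoiding permutation $\sigma\in\Av_m(312)$ with $W_k(\sigma)=w$. Recall that by \cref{lemma:pathperm} there is already \emph{some} permutation $\sigma\in\SS_m$ (not necessarily $312$-avoiding) with $W_k(\sigma)=w$; the work is to show one can always choose it $312$-avoiding. The key local observation I would isolate first is the following: if $\tau\in\SS_m$ satisfies $W_k(\tau)=w$ and $w$ is a walk in the \emph{restricted} overlap graph, then every window $\pat_{[i,i+k-1]}(\tau)$ avoids $312$ (since it equals $\lb(e_i)\in\Av_k(312)$), but $\tau$ itself may still contain a $312$-occurrence using indices that are far apart. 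So the statement to prove is really that avoidance of $312$ in all windows of length $k$ propagates to global avoidance — which is \emph{false} for general patterns but, I claim, works for $312$ because of the special structure of $312$-avoiding permutations (every prefix maximum forces everything to its right that is smaller than it to appear in decreasing-block position, etc.).

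Concretely, I would proceed by induction on $m$, peeling off one edge of the walk at a time. Suppose we have built $\sigma\in\Av_{m-1}(312)$ realizing the walk $(e_1,\dots,e_{m-k})$; we must append a new rightmost value so that the last window becomes $\lb(e_{m-k+1})$ while preserving $312$-avoidance globally. The constraint from the walk is only on $\en_{k-1}(\sigma)$ together with the new point, i.e.\ on the relative order of the last $k-1$ entries of $\sigma$ and the new entry $\sigma(m)$; the edge $e_{m-k+1}$ tells us exactly the rank of $\sigma(m)$ among those $k$ values. The danger is that inserting $\sigma(m)$ at the prescribed relative rank among the last $k-1$ entries creates a $312$ pattern $\sigma(a)\,\sigma(b)\,\sigma(m)$ with $a<b$ and $\sigma(b)<\sigma(m)<\sigma(a)$ where $a$ lies before the last window. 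I would argue this cannot happen: because $\sigma$ is $312$-avoiding, any entry $\sigma(a)$ with $a\le m-k+1$ and $\sigma(a)>\sigma(m)$ would, together with the window entries, already be forced into a configuration contradicting either the $312$-avoidance of $\sigma$ or the compatibility of the shared length-$(k-1)$ overlap — here one uses that in a $312$-avoiding permutation, once you know the pattern of a window, the "large" entries to its left are constrained. The cleanest route may be to not insert in place but to \emph{rescale}: build $\sigma$ directly as a concatenation of blocks dictated by the edges, using the known normal form of $312$-avoiding permutations (each left-to-right minimum starts a decreasing run / the "staircase" decomposition), and check the window condition block by block.

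The main obstacle, and the step I would spend the most care on, is precisely ruling out long-range $312$-occurrences: showing that the greedy/inductive insertion compatible with the prescribed windows never forces a $312$ pattern spanning more than $k$ consecutive positions. This is where $\tau=312$ (as opposed to an arbitrary pattern) is essential — the analogous statement fails for general $\tau$, which is consistent with \cref{fact:not_always_equal} showing the $321$ case genuinely differs. I would first nail down the structural lemma about $312$-avoiding permutations (the shape of left-to-right minima and the runs between them), then phrase the insertion step as: "extend the last left-to-right-minima/run decomposition in the unique way compatible with $e_{m-k+1}$", and verify $312$-avoidance is preserved because a new $312$ would have its "$1$" at the new rightmost position and its "$3$" would have to be a left-to-right maximum that already violated avoidance with the existing window. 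Once surjectivity of $W_k$ onto walks is established, \cref{thm:312-avoiding} follows by the same cycle-decomposition argument as in \cite{borga2019feasible} together with \cref{prop:strongly_connected} and \cref{prop:bnd_dim}.
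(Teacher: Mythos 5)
Your overall strategy --- induct on the size of the permutation, appending one new rightmost value at each step so that the terminal length-$k$ window matches the next edge label --- is exactly the paper's strategy (cf.\ \cref{defin:append_permutation} and \cref{lemma:lem_for_pathperm312}). But the proposal has a genuine gap at precisely the step you yourself flag as ``the main obstacle,'' and two of the claims you lean on along the way are in fact false. The framing ``the statement to prove is really that avoidance of $312$ in all windows of length $k$ propagates to global avoidance'' is both unnecessary (the lemma only asserts that \emph{some} preimage of the walk is $312$-avoiding, not that \emph{every} one is) and false: $\sigma=3142$ has both of its length-$3$ windows ($213$ and $132$) avoiding $312$, yet $\sigma$ contains $312$ at positions $1,2,4$.

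More importantly, it is not enough to say the new value is inserted ``at the prescribed relative rank among the last $k-1$ entries'': the edge label fixes the pattern of the last $k$ entries but leaves an entire interval of compatible appended heights, and choosing badly really does create a long-range $312$. Concretely, take $k=3$, $\sigma = 1324 \in \Av(312)$ with $\en_3(\sigma)=213$, and target edge $\pi' = 132$. Both $\ell=3$ and $\ell=4$ give $\en_3(\sigma^{*\ell})=132$, but $\sigma^{*3} = 14253$ contains $312$ (positions $2,3,5$), while $\sigma^{*4} = 13254$ avoids $312$. So ``greedy insertion compatible with the windows'' does not automatically preserve avoidance, and the vague appeal to $\sigma$ being $312$-avoiding cannot close this hole without specifying \emph{which} value to append. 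The paper resolves it by choosing $\ell = \sigma(i)$ where $i$ is the index in $\sigma$ of the entry corresponding to the point directly above $(k,\pi'(k))$ in the diagram of $\pi'$ (equivalently, the largest compatible $\ell$); the proof of \cref{lemma:lem_for_pathperm312} then shows that any alleged $312$ in $\sigma^{*\ell}$ must use the new point as its ``$2$'' --- not its ``$1$'', as you write, since the new point is rightmost --- and derives a $312$ either inside $\sigma$ or inside $\pi'$ according to whether the middle index of the offending triple lies left or right of $i$. That explicit choice and the two-case contradiction are exactly the missing content of your sketch; the alternative ``staircase block'' route you mention is not developed either. The deduction of the full lemma from this local extension step, and the derivation of \cref{thm:312-avoiding} from surjectivity, are stated correctly.
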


To prove the lemma above we have to introduce the following.

\begin{defn}\label{defin:append_permutation}
	Given a permutation $\sigma\in\SS_n$ and an integer $\ell\in [n+1],$ we denote by $\sigma^{*\ell}$ the permutation obtained from $\sigma$ by \emph{appending a new final value} equal to $\ell$ and shifting by $+1$ all the other values larger than or equal to $\ell.$ %Equivalently,
%	$$\sigma^{*\ell}\coloneqq\text{std}(\sigma(1),\dots,\sigma(n),\ell-1/2).$$
\end{defn}

The proof of \cref{lemma:pathperm312} is based on the following result. Recall the definition of the set $C_{G}(e)$ of continuations of an edge $e$ in a graph $G$, i.e.\ the set of edges $e' \in E(G)$ such that $\st(e') = \ar(e) $.

\begin{lem}\label{lemma:lem_for_pathperm312}
	Let $\sigma$ be a permutation in $\Av(312)$ such that $\en_k(\sigma)=\pi$ for some $\pi\in\Av_k(312)$. Let $\pi'\in\Av_k(312)$ such that $\pi'\in C_{\AValGraph[k,312]}(\pi)$. Then there exists $\ell\in[|\sigma|+1]$ such that $\sigma^{*\ell}\in\Av(312)$ and $\en_k(\sigma^{*\ell})=\pi'$.
\end{lem}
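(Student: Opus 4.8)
The plan is to construct the value $\ell$ explicitly in terms of the combinatorial structure of $\pi$ and $\pi'$, and then verify the two required properties: $312$-avoidance of $\sigma^{*\ell}$ and $\en_k(\sigma^{*\ell})=\pi'$. First I would record what it means for $\pi'$ to be a continuation of $\pi$ in $\AValGraph[k,\Av(312)]$: by Definition \ref{defn:ov_graph_av_perm}, there is an edge from $\be_{k-1}$ of something to $\en_{k-1}$ of something, so concretely $\en_{k-1}(\pi)=\be_{k-1}(\pi')$, i.e.\ the pattern induced by the last $k-1$ entries of $\pi$ equals the pattern induced by the first $k-1$ entries of $\pi'$. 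The idea is then that appending a value to $\sigma$ does exactly one thing to $\en_k$: it drops the first of the last-$k$ entries and appends the new value, so $\en_k(\sigma^{*\ell})$ is determined by $\en_{k-1}(\sigma)$ together with the relative position of the new value $\ell$. Since $\en_{k-1}(\sigma)=\en_{k-1}(\pi)=\be_{k-1}(\pi')$, the last $k-1$ entries of $\sigma$ already ``look like'' the first $k-1$ entries of $\pi'$; I just need to choose $\ell$ so that the new final entry sits, relative to those $k-1$ entries, exactly where $\pi'(k)$ sits relative to $\pi'(1),\dots,\pi'(k-1)$. This pins down the correct ``rank'' $r\in[k]$ that the new entry must have among the last $k$ entries of $\sigma^{*\ell}$; translating that rank back through the actual values of $\sigma$ on its last $k-1$ positions gives the explicit $\ell$ (roughly: if the last $k-1$ entries of $\sigma$ in increasing order are $a_1<\dots<a_{k-1}$, take $\ell$ to be $a_{r-1}+1$ when $2\le r\le k-1$, or $a_{k-1}+1$ when $r=k$, or $a_1$ — equivalently the smallest of them — when $r=1$, with the shifting convention of Definition \ref{defin:append_permutation}). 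By construction $\en_k(\sigma^{*\ell})=\pi'$.

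The second thing to check is that $\sigma^{*\ell}\in\Av(312)$. Here I would use the very rigid structure of $312$-avoiding permutations: appending a new last value $v$ to a $312$-avoider $\sigma$ creates a $312$ occurrence if and only if there exist indices $i<j$ with $\sigma(i)>\sigma(j)$ and $\sigma(i)>v>\sigma(j)$ is impossible... more precisely a new $312$ using the last position as the ``2'' requires $i<j\le n$ with $\sigma(i)>v$ and $\sigma(j)<v<\sigma(i)$ — wait, the new entry is last, so it plays the role of the final symbol; a pattern $312$ with the new entry in the third (last) coordinate needs $i<j$, $\sigma(i)$ large, $\sigma(j)$ the ``1'', and the new value the ``2'', i.e.\ $\sigma(j)<v<\sigma(i)$ with $i<j$ and $\sigma(i)>\sigma(j)$. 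So $\sigma^{*\ell}$ avoids $312$ iff there is no descent $\sigma(i)>\sigma(j)$ ($i<j$) straddling $v$. The key point is that $\sigma$ being $312$-avoiding constrains its descent structure enough that, among all ranks $r\in[k]$ corresponding to continuations $\pi'$ in the overlap graph, the induced $v$ never straddles such a descent. I expect this to be the main obstacle: one must argue that every edge $\pi'$ leaving $\pi$ in $\AValGraph[k,\Av(312)]$ — and these are exactly the $312$-avoiding size-$k$ patterns whose first $k-1$ entries have shape $\en_{k-1}(\pi)$ — corresponds to a choice of $v$ compatible with the global descent pattern of $\sigma$, not merely its last $k-1$ entries.

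To handle that obstacle I would exploit the structure of the last $k-1$ values of a $312$-avoider: if $v$ lies strictly between a descent top $\sigma(i)$ and bottom $\sigma(j)$ with $i<j\le n-k+1$ (i.e.\ the descent reaches back before the last $k-1$ positions), I would show that already inside $\sigma$ there is a forced structure — using $312$-avoidance, everything ``to the right'' of a large value and below it is itself decreasing-ish — that forces $\pi=\en_k(\sigma)$ together with $\pi'$ to violate $312$-avoidance of $\pi'$, contradicting $\pi'\in\Av_k(312)$. If instead the straddling descent is entirely within the last $k-1$ positions, then it is visible inside $\pi$, and again the choice of rank $r$ coming from a genuine $312$-avoiding $\pi'$ rules it out, because $\pi'^{*}$-type reasoning shows $\pi'$ would contain $312$. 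In other words, the hypothesis $\pi'\in\Av_k(312)$ is exactly strong enough to make the local check pass, and one lifts it to the global check using the self-similar structure of $312$-avoiders (large values followed by $312$-avoiders on the values below). I would conclude by assembling: the explicit $\ell$ gives $\en_k(\sigma^{*\ell})=\pi'$ by the rank computation, and the descent-straddling analysis gives $\sigma^{*\ell}\in\Av(312)$, completing the proof. A clean way to organize the verification is to first dispose of the case $k=1$ (trivial) and $k=2$ (direct), and then treat $k\ge 3$ with the descent argument above.
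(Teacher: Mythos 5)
Your high-level plan --- reduce the existence of $\ell$ to an interval $(a_{r-1},a_r]$ of admissible appended values (where $a_1<\dots<a_{k-1}$ are the last $k-1$ values of $\sigma$ in increasing order and $r=\pi'(k)$), then verify $312$-avoidance --- has the right shape, and you correctly flag the $312$-avoidance check as the only nontrivial step. But the explicit choice you propose, $\ell=a_{r-1}+1$ for $2\le r\le k$, is the \emph{wrong} endpoint of the admissible interval, and the verification you sketch (``a straddling descent would force $\pi'$ to contain $312$'') is false for that choice. Concretely, take $\sigma=2314\in\Av(312)$ and $k=3$, so $\pi=\en_3(\sigma)=\std(3,1,4)=213$ and $a_1=1$, $a_2=4$. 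The pattern $\pi'=132\in\Av_3(312)$ satisfies $\be_2(\pi')=12=\en_2(\pi)$, hence $\pi'\in C_{\AValGraph[3,\Av(312)]}(\pi)$, and $r=\pi'(3)=2$. Your formula gives $\ell=a_1+1=2$, but $\sigma^{*2}=\std(2,3,1,4,1.5)=34152$ contains $312$ at positions $1,3,5$. The new value sits just above $a_{r-1}$ and does straddle the descent $\sigma(1)=2>\sigma(3)=1$, whose top lies in the prefix $\sigma(1),\dots,\sigma(|\sigma|-k+1)$, with no contradiction to $\pi'\in\Av(312)$ --- so your planned contradiction simply does not materialize.

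The paper takes the \emph{upper} endpoint: $\ell=a_r$ when $2\le r\le k-1$, and the global extremes $\ell=1$ or $\ell=|\sigma|+1$ when $r=1$ or $r=k$. With $\ell=a_r$ the appended value is placed immediately \emph{below} the entry of $\sigma$ corresponding to the point of $\pi'$ just above $(k,\pi'(k))$, so that in $\sigma'\coloneqq\sigma^{*\ell}$ one has $\sigma'(i)=\sigma'(|\sigma'|)+1$ where $i$ is that entry's index. This off-by-one identity is the engine of the proof: any alleged $312$-occurrence ending at the appended value can be traded (by swapping the last index with $i$, depending on whether the middle index is before or after $i$) for a $312$-occurrence either entirely inside $\sigma$ or entirely inside the last $k$ positions, contradicting $\sigma\in\Av(312)$ or $\pi'\in\Av(312)$. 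In the example above this gives $\ell=a_2=4$, $\sigma^{*4}=23154\in\Av(312)$ and $\en_3(\sigma^{*4})=132=\pi'$ as needed. So the genuine gap is precisely \emph{which} endpoint to choose and the exploitation of the resulting off-by-one relation; your current choice makes the lemma's conclusion false, and the vague ``self-similar structure of $312$-avoiders'' appeal cannot repair it.
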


We first explain how \cref{lemma:pathperm312} follows from \cref{lemma:lem_for_pathperm312} and then we prove the latter.

\begin{proof}[Proof of \cref{lemma:pathperm312}]
	In order to prove the claimed surjectivity, given a walk $w=(e_1,\dots,e_s)$ in $\AValGraph[k,312]$, we have to exhibit a permutation $\sigma\in\Av(312)$ of size $s+k-1$ such that $W_k(\sigma)=w$.
	We do that by constructing a sequence of $s$ permutations $(\sigma_i)_{i\leq s}\in{(\Av(312))^s}$ with size $|\sigma_i|=i+k-1$, in such a way that $\sigma$ is equal to $\sigma_s$. Moreover, we will have that $\be_{|\sigma_{i+1}|-1}(\sigma_{i+1})=\sigma_i$.
	
	The first permutation is defined as $\sigma_1 = \lb (e_1 )$. To construct $\sigma_{i+1}$ from $\sigma_{i}$, note that from \cref{lemma:lem_for_pathperm312} there exists $\ell\in[|\sigma_{i}|+1]$ such that $\en_k(\sigma_{i}^{*\ell})$ is equal to the pattern $\lb ( e_{i+1} )$ and $\sigma_{i}^{*\ell}$ avoids the pattern 312. Then we define $\sigma_{i+1}\coloneqq\sigma_{i}^{*\ell}$, determining the sequence $(\sigma_i)_{i\leq s}\in{(\Av(312))^s}$. Finally, setting $\sigma\coloneqq \sigma_s$ we have by construction that $W_k(\sigma)=w$ and that $\sigma\in\Av_{s+k-1}(312)$. 
\end{proof}

\begin{proof}[Proof of \cref{lemma:lem_for_pathperm312}]
We have to distinguish two cases.

\textbf{Case 1:} $\pi'(k)\in\{1,k\}$. We define $\ell\coloneqq \mathds{1}_{\{\pi'(k)=1\}}+(|\sigma|+1)\mathds{1}_{\{\pi'(k)=k\}}$. In this case one can see that $\sigma^{*\ell}\in\Av(312)$ -- the new final value $\ell$ cannot create an occurrence of $312$ in $\sigma^{*\ell}$ -- and that $\en_k(\sigma^{*\ell})=\pi'$. 

\textbf{Case 2:} $\pi'(k)\in[2,k-1]$. Consider the point just above $(k,\pi'(k))$ in the diagram of $\pi'$ and the corresponding point in the last $k-1$ points of $\sigma$ (for an example see the two red points in \cref{fig:Schema_proof_312}). Let $i$ be the index in the diagram of $\sigma$ of the latter point.
We claim that $\sigma^{*\sigma(i)}\in\Av(312)$ and $\en_k(\sigma^{*\sigma(i)})=\pi'$. The latter is immediate.  
It just remains to show that $\sigma'\coloneqq\sigma^{*\sigma(i)}\in\Av(312)$.

Assume by contradiction that $\sigma'$ contains an occurrence of $312$. Since by assumption $\sigma\in\Av(312)$ then the value $2$ of the occurrence $312$ must correspond to the final value $\sigma'(|\sigma'|)=\sigma(i)$ of $\sigma'$. Moreover, since $\pi'\in\Av(312)$, the $312$-occurrence cannot occur in the last $k$ elements of $\sigma'$, that is the $312$-occurrence must occur at the values $\sigma'(j),\sigma'(r),\sigma'(|\sigma'|)$ for some indices $j\leq |\sigma'|-k$ and $j<r<|\sigma'|$. Because $\sigma'(j),\sigma'(r),\sigma'(|\sigma'|)$ is an occurrence of 312, $\sigma'(j)>\sigma'(|\sigma'|)$. Moreover, since $\sigma'(i)=\sigma'(|\sigma'|)+1$ by construction, it follows that $\sigma'(j)>\sigma'(i)$. Note that $r\neq i$ since $\sigma'(i)=\sigma'(|\sigma'|)+1$ and $\sigma'(r)<\sigma'(|\sigma'|)$.
Therefore, we have two cases:
\begin{itemize}
	\item If $r<i$ then $\sigma'(j),\sigma'(r),\sigma'(i)$ is also an occurrence of $312$. A contradiction to the fact that $\sigma\in\Av(312)$.
	\item If $r>i$ then $\sigma'(i),\sigma'(r),\sigma'(|\sigma'|)$  is also an occurrence of $312$. A contradiction to the fact that $\pi'\in\Av(312)$.
\end{itemize}
This concludes the proof.
\end{proof}

\begin{figure}[htbp]
  \centering
     \includegraphics[scale=.5]{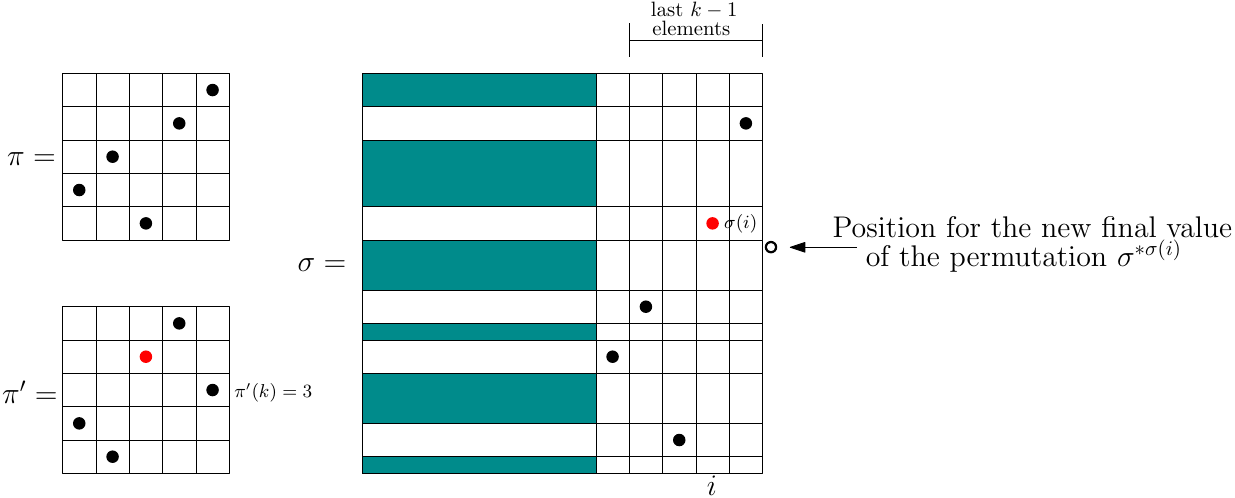}\\
  \caption{A schema for the proof of \cref{lemma:lem_for_pathperm312}. \label{fig:Schema_proof_312}}
\end{figure}

Building on \cref{prop:convexity_312} and \cref{lemma:pathperm312} we can now prove \cref{thm:312-avoiding}.

\begin{proof}[Proof of \cref{thm:312-avoiding}]
	The fact that $P^{312}_k=P(\AValGraph[k,312])$ follows using exactly the same proof of \cite[Theorem 3.12]{borga2019feasible} replacing Lemma 3.8 and Proposition 3.2 of \cite{borga2019feasible} by \cref{lemma:pathperm312} and \cref{prop:convexity_312} of this paper (note that in the proof of  \cite[Theorem 3.12]{borga2019feasible} we also use the fact that the feasible region is closed and this is still true for $P^{312}_k$, thanks to \cref{lem:P_n_is_closed}).
	%The fact that the dimension of $P^{312}_k$ is $C_k - C_{k-1}$ follows from \cref{prop:strongly_connected} and the well-known fact that the number of permutations of size $k$ avoiding the pattern $312$ is equal to the $k$-th Catalan number. Finally the fact that the vertices of $P^{312}_k$ are given by the simple cycles of $\AValGraph[k,312]$ is a consequence of \cite[ Proposition 2.2]{borga2019feasible}.
\end{proof}

\section{The feasible region for monotone-avoiding permutations}\label{sect:mon_avoid_case}

Fix $\monodown_n = n \cdots 1 $, the decreasing pattern of size $n\in\Z_{\geq 1}$.
In this section we study $P^{\monodown_n}_k $ and we show that it is related to the cycle polytope of the coloured overlap graph $ \OvMon[k,\monodown_n]$ , presented in \cref{defn:ov_graph_av_monotoneperm} -- this is \cref{thm:feasregmonotones*}, more precisely restated in \cref{thm:feasregmonotones}.
%We also compute the dimension of $P^{\monodown_n}_k $ -- this is \cref{thm:dimension}.

\subsection{Definitions and combinatorial constructions}

We start by introducing colourings of permutations.

\begin{defn}[Colourings and RITMO colourings]
	\label{defn:colourings_of_permutations}
	Fix an integer $m\in\Z_{\geq 1}$.
	For a permutation $\sigma$, an $m$-colouring of $\sigma$ is a map $\mathfrak{c}: [|\sigma|] \to [m]$, which is to be interpreted as a map from the set of \emph{indices} of $\sigma$ to $[m]$.
	An $m$-colouring $\mathfrak{c}$ is said to be \emph{rainbow} when $\im (\mathfrak{c}) = [m]$.
	For any permutation $\sigma$, we define its \emph{right-top monotone colouring} (simply RITMO colouring henceforth), which we denote as $\mathbb{C}(\sigma)$.
	This colouring is constructed iteratively, starting with the highest value of the permutation which receives the colour 1 and going down while assigning the lowest possible colour that prevents the occurrence of a monochromatic 21.
\end{defn}	

If a permutation is coloured with its RITMO colouring, the left-to-right maxima are coloured by $1$; removing these left-to-right maxima, the left-to-right maxima of the resulting set of points are coloured by $2$, and so on. We suggest to the reader to keep in mind both points of view (the one given in the definition and the one described now) on RITMO colourings.

\begin{exmp}
	In all our examples, we paint in {\color{red} red} the values coloured by $1$, in {\color{blue} blue} the ones coloured by two, and in {\color{green} green} the ones coloured by three.
	For instance, the RITMO colouring for permutations $1427536$ is given by $\color{red}{1}\color{red}{4}\color{blue}{2}\color{red}{7}\color{blue}{5}\color{green}{3}\color{blue}{6}$. 
\end{exmp}

%\begin{figure}[htbp]
%	\centering
%	\includegraphics[scale=0.7]{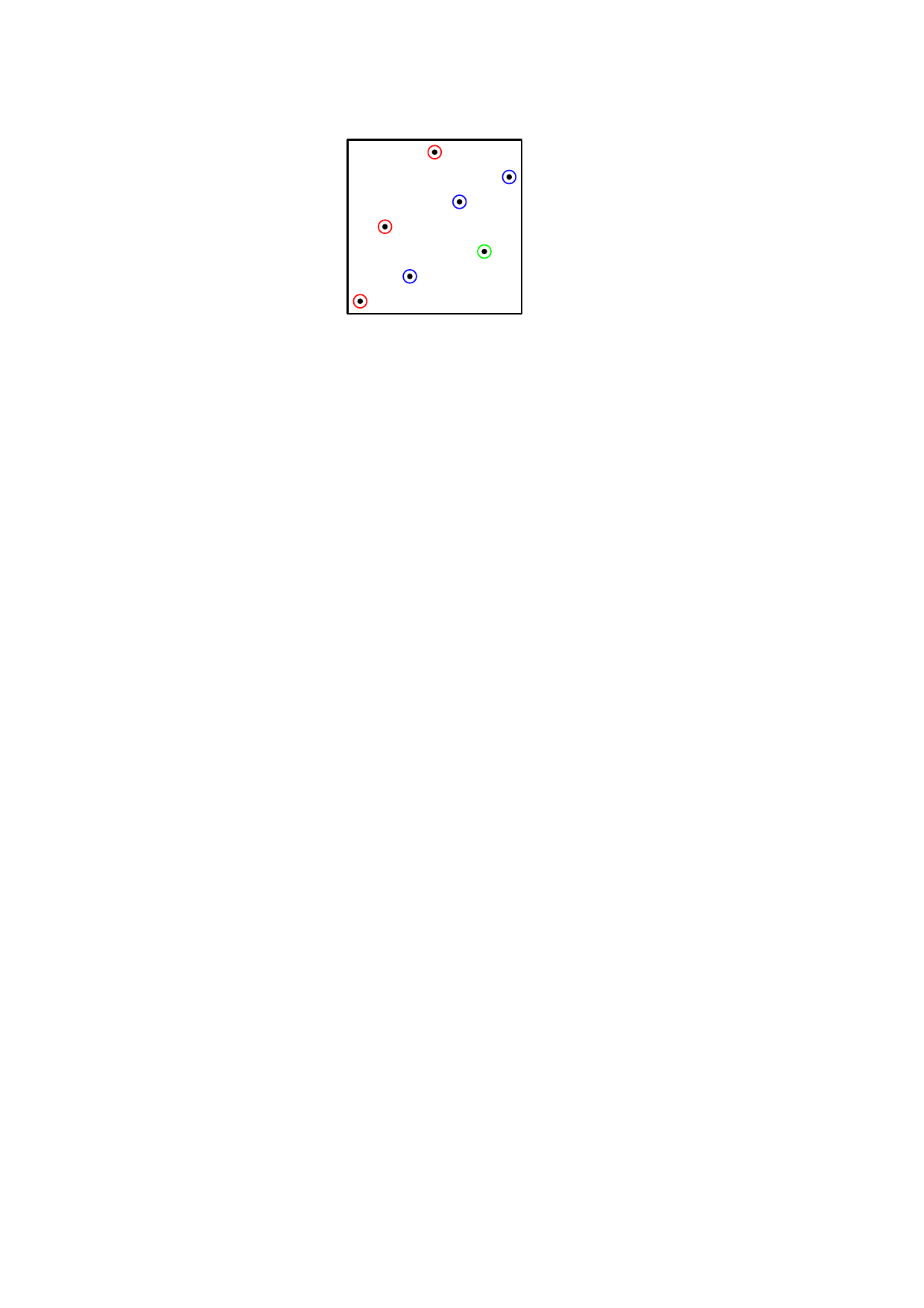}
%	\caption{\label{fig:RITMO_colouring_of_permutations} The RITMO colourings of $312$, $1427536$ and $124376985$.}
%\end{figure}

For the pair $(\sigma, \mathbb{C}(\sigma))$ we simply write $\mathbb{S}(\sigma)$.
If $\sigma $ avoids the permutation $\monodown_n $, it is known that its RITMO colouring is an $(n-1)$-colouring (the origins of this result are hard to trace, but it goes back at least to \cite{greene1974extension} where it is already noted as something that is not hard to prove; see also \cite[Chapter 4.3]{MR2919720}).

We furthermore allow for taking \textit{restrictions of colourings}.
Given a permutation $\sigma $ of size $k$, a colouring $\mathfrak{c}$ of $\sigma $ and a subset $I=\{i_1, \dots ,i_j\}\subseteq [k]$, we consider the restriction $\pat_I(\sigma,\mathfrak{c})$ to be the pair $(\pat_I(\sigma),\mathfrak{c}')$, where $\mathfrak{c}'(\ell)=\mathfrak{c}(i_{\ell})$ for all $\ell\in [j]$.

The following definition is fundamental in our results.

\begin{defn}
	We say that an $m$-colouring $\mathfrak{c}$ of a permutation $\pi\in\Av(\monodown_n)$ of size $k$ is \emph{inherited} if there is some permutation $\sigma\in\Av(\monodown_n)$ of size $\ell \geq k$ such that $\en_k ( \mathbb{S}(\sigma) )=(\pi, \mathfrak{c})$.
\end{defn}
Observe that it may be the case that $\pat_I(\mathbb{S}(\sigma))$ and $\mathbb{S}(\pat_I(\sigma))$ are distinct inherited colourings of the permutation $\pat_I(\sigma)$. For instance, if $\sigma=2134$ and $I=\{2,3,4\}$ then  $\pat_I(\mathbb{S}(\sigma))=\pat_{\{2, 3, 4\}}({\color{red}2}{\color{blue}1}{\color{red}34})={\color{blue}1}{\color{red}23}$ but $\mathbb{S}(\pat_I(\sigma))=\mathbb{S}(123)={\color{red}123}$.
This is unlike the relation between $\mathbb{S}$ and $\be$, as one can see in \cref{obs:Correctness}.

To sum up, we have introduced three notions of colourings, each more restricted than the previous one. In particular, any RITMO colouring is an inherited colouring, and any inherited colouring is a colouring.

\medskip

Let $\CCC_{m}(\pi )$ be the set of all inherited $m$-colourings of a permutation $\pi\in\Av(\monodown_n)$. We also set $\CCC_{m}(k) =\{ (\pi, \mathfrak{c}) | \pi\in \Av_k(\monodown_n), \, \, \mathfrak{c} \text{ is an inherited $m$-colouring of } \pi \}$, that is the set of all inherited $m$-colourings of permutations of size $k$.

\begin{exmp}
	Let $n = 3$. In \cref{tbl:inherited_2_colourings} we present all the inherited $2$-colourings of permutations of size three. Thus, 
	$$ \mathcal C_2(3) = \left\{{\color{red}123}, {\color{blue}1}{\color{red}23}, {\color{blue}12}{\color{red}3}, {\color{blue}123}, {\color{red}13}{\color{blue}2}, {\color{blue}1}{\color{red}3}{\color{blue}2}, {\color{red}2}{\color{blue}1}{\color{red}3}, {\color{red}23}{\color{blue}1}, {\color{red}3}{\color{blue}12}\right\} \, .$$
\end{exmp}
\begin{table}[htbp]
	\centering
	\begin{tabular}{|c | l |}
		\hline
		123 & ${\color{red}123}=\mathbb{S}(123)$, ${\color{blue}1}{\color{red}23}=\en_3({\color{red}2}{\color{blue}1}{\color{red}34})$, ${\color{blue}12}{\color{red}3}=\en_3({\color{red}3}{\color{blue}12}{\color{red}4})$, ${\color{blue}123}=\en_3({\color{red}4}{\color{blue}123})$ \\
		\hline
		132 &  ${\color{red}13}{\color{blue}2}=\mathbb{S}(132)$, ${\color{blue}1}{\color{red}3}{\color{blue}2}=\en_3({\color{red}3}{\color{blue}1}{\color{red}4}{\color{blue}2})$ \\ 
		\hline
		213 & ${\color{red}2}{\color{blue}1}{\color{red}3}=\mathbb{S}(213)$ \\
		\hline
		231 & ${\color{red}23}{\color{blue}1}=\mathbb{S}(231)$ \\
		\hline
		312 & ${\color{red}3}{\color{blue}12}=\mathbb{S}(312)$ \\
		\hline
	\end{tabular}
	\captionsetup{width=\textwidth}
	\caption{The permutations of size three, and their corresponding inherited $2$-colourings. Note that all permutations of size four in this table are coloured according to their RITMO colouring. Observe also that the coloured permutation ${\color{red} 2}{\color{blue}13}$ is not inherited.
		\label{tbl:inherited_2_colourings}
	}
\end{table}
We introduce a key definition for this and the consecutive sections.
\begin{defin}
	\label{defn:ov_graph_av_monotoneperm}
	The \emph{coloured overlap graph} $\OvMon[k, \monodown_n]$ is defined with
	the vertex set 
	$$V \coloneqq \CCC_{n-1}(k-1) = \{(\pi , \mathfrak{c}) | \pi \in \Av_{k-1}(\monodown_n), \, \mathfrak{c} \text{ is an inherited $(n-1)$-colouring of } \pi\},$$
	and the edge set
	$$E \coloneqq \CCC_{n-1}(k) = \{(\pi , \mathfrak{c}) | \pi \in \Av_{k}(\monodown_n), \, \mathfrak{c} \text{ is an inherited $(n-1)$-colouring of } \pi\}\, ,$$
	where the edge $(\pi , \mathfrak{c})$ connects $ v_1 \to v_2 $ with $v_1 = \be_{k-1}(\pi , \mathfrak{c})$ and $v_2 = \en_{k-1}(\pi , \mathfrak{c})$.
\end{defin}

In \cref{fig:ovk3n2} we present the coloured overlap graph corresponding to $k=3$ and $n=3$.

\begin{figure}
	\begin{minipage}[c]{0.33\textwidth}
		\centering
		\includegraphics[scale=0.6]{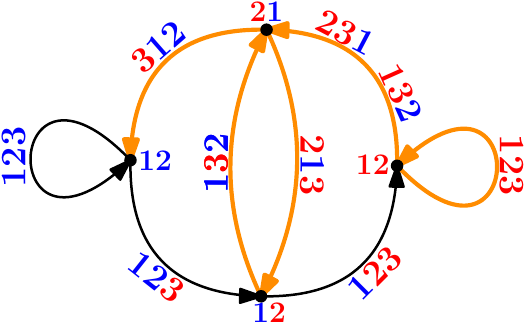}
	\end{minipage}\hfill
	\begin{minipage}[c]{0.55\textwidth}
		\captionsetup{width=\textwidth}
		\caption{The coloured overlap graph for $k=3$ and $n=3$, which also appears in the bottom part of \cref{fig:P_3_and_rest}.
			Note that in order to obtain a clearer picture we do not draw multiple edges, but we use multiple labels (for example the edge ${\color{red}12} \to {\color{red}2}{\color{blue}1}$ is labelled with the permutations ${\color{red}2}{\color{red}3}{\color{blue}1}$ and ${\color{red}1}{\color{red}3}{\color{blue}2}$ and should be thought of as two distinct edges labelled with ${\color{red}2}{\color{red}3}{\color{blue}1}$ and ${\color{red}1}{\color{red}3}{\color{blue}2}$ respectively). The role of the orange edges will be clarified later.
			\label{fig:ovk3n2}}
	\end{minipage}
\end{figure}

%\begin{figure}[htbp]
%	\centering
%	\includegraphics[scale=0.5]{ovk3n2_path.pdf}
%	\captionsetup{width=\textwidth}
%	\caption{The coloured overlap graph for $k=3$ and $n=3$, that also appears in the right-hand side of \cref{fig:P_3_and_rest}.
%		Note that in order to obtain a clearer picture we do not draw multiple edges, but we use multiple labels (for example the edge ${\color{red}12} \to {\color{red}2}{\color{blue}1}$ is labelled with the permutations ${\color{red}2}{\color{red}3}{\color{blue}1}$ and ${\color{red}1}{\color{red}3}{\color{blue}2}$ and should be thought of as two distinct edges labelled with ${\color{red}2}{\color{red}3}{\color{blue}1}$ and ${\color{red}1}{\color{red}3}{\color{blue}2}$ respectively). The role of the orange edges will be clarified later.
%	 \label{fig:ovk3n2}}
%\end{figure}

\begin{lem}\label{lemma:well_defn}
	The coloured overlap graph is well-defined, i.e.\ that for any edge $(\pi , \mathfrak{c})\in \CCC_{n-1}(k)$, then both $\be_{k-1}(\pi , \mathfrak{c})\in  \CCC_{n-1}(k-1)$ and $\en_{k-1}(\pi , \mathfrak{c})\in  \CCC_{n-1}(k-1)$.
\end{lem}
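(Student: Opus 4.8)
The goal is to show that if $(\pi,\mathfrak{c})$ is an inherited $(n-1)$-colouring of a permutation $\pi\in\Av_k(\monodown_n)$, then both its ``beginning restriction'' $\be_{k-1}(\pi,\mathfrak{c})$ and its ``ending restriction'' $\en_{k-1}(\pi,\mathfrak{c})$ are again inherited $(n-1)$-colourings (of permutations of size $k-1$). By definition of inherited, there is a permutation $\sigma\in\Av(\monodown_n)$ of size $\ell\geq k$ with $\en_k(\mathbb{S}(\sigma))=(\pi,\mathfrak{c})$. The plan is to produce, from $\sigma$, two witnessing permutations $\sigma'$ and $\sigma''$ in $\Av(\monodown_n)$ whose RITMO-coloured final $(k-1)$-windows are exactly $\be_{k-1}(\pi,\mathfrak{c})$ and $\en_{k-1}(\pi,\mathfrak{c})$ respectively.

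For the $\en$ case this should be essentially immediate: the last $k-1$ indices of $\sigma$ already induce, under $\mathbb{S}(\sigma)$, the pair $\en_{k-1}(\mathbb{S}(\sigma))$, and one checks that restricting the last-$k$ window to its last $k-1$ entries commutes with taking $\en$, i.e.\ $\en_{k-1}(\pi,\mathfrak{c})=\en_{k-1}(\mathbb{S}(\sigma))$. Since $\sigma$ itself is a permutation in $\Av(\monodown_n)$ of size $\ell\geq k\geq k-1$, this directly exhibits $\en_{k-1}(\pi,\mathfrak{c})$ as inherited. The only subtlety is to confirm that the colour labels genuinely match, i.e.\ that the RITMO colouring of $\sigma$ restricted to the last $k-1$ indices is the same whether we restrict the last-$k$ window or restrict $\sigma$ directly — but this is just transitivity of taking subset-restrictions of a fixed colouring, so no real work is needed.

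The $\be$ case is where the content lies, and where I expect the main obstacle. Here $\be_{k-1}(\pi,\mathfrak{c})$ is the pattern (with inherited colour labels from $\mathbb{C}(\sigma)$) induced by the first $k-1$ of the last $k$ indices of $\sigma$; call these indices $j_1<\dots<j_{k-1}$, so they are the indices $\ell-k+1,\dots,\ell-1$ of $\sigma$. The natural candidate witness is $\sigma'\coloneqq\be_{\ell-1}(\sigma)$, the permutation on the first $\ell-1$ indices of $\sigma$, which lies in $\Av(\monodown_n)$ since $\Av(\monodown_n)$ is pattern-closed. The crux is to show that $\mathbb{C}(\sigma')$ agrees on these last $k-1$ indices with the restriction of $\mathbb{C}(\sigma)$ — i.e.\ that deleting the single last value of $\sigma$ does not change the RITMO colours of any of the surviving entries. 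This is precisely the ``$\mathbb{S}$ commutes with $\be$'' phenomenon alluded to right before the lemma (``This is unlike the relation between $\mathbb{S}$ and $\be$, as one can see in \cref{obs:Correctness}''), so I would invoke (or, if it is not yet available, prove inline) the fact that $\be_{j}(\mathbb{S}(\sigma))=\mathbb{S}(\be_{j}(\sigma))$ for all $j\le|\sigma|$. Intuitively this holds because RITMO colouring is built from left-to-right maxima processed greedily, and removing a suffix of the \emph{index} set cannot retroactively alter which surviving entries are left-to-right maxima at each stage, nor the greedy colour assigned to them; a clean way to see it is via the ``remove left-to-right maxima layer by layer'' description, where each layer of $\be_j(\sigma)$ is the corresponding layer of $\sigma$ intersected with the first $j$ indices. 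Granting this, $\en_{k-1}(\mathbb{S}(\sigma'))=\en_{k-1}(\mathbb{S}(\be_{\ell-1}(\sigma)))$ equals the pair induced by indices $\ell-k+1,\dots,\ell-1$ under $\mathbb{C}(\sigma)$, which is exactly $\be_{k-1}(\pi,\mathfrak{c})$; hence $\be_{k-1}(\pi,\mathfrak{c})$ is inherited, witnessed by $\sigma'\in\Av(\monodown_n)$ (of size $\ell-1\geq k-1$). This completes both cases and the lemma.
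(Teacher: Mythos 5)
Your proposal is correct and follows essentially the same route as the paper: the $\en$ case is handled by transitivity of restrictions with $\sigma$ itself as witness, and the $\be$ case uses $\be_{|\sigma|-1}(\sigma)$ as witness, justified exactly by the commutation $\be_j(\mathbb{S}(\sigma))=\mathbb{S}(\be_j(\sigma))$ of \cref{obs:Correctness}, which is also the paper's key step (cf.\ its equation \cref{eq:prop117trick}).
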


The following simple result is a key step for the proof of the lemma above.
\begin{obs}\label{obs:Correctness} For all permutations $\sigma\in\Av(\monodown_n)$ and all $j\leq |\sigma|$, we have that
	$$\be_j(\mathbb{S}(\sigma)) = \mathbb{S}(\be_j(\sigma))\, .$$
\end{obs}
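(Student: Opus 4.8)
\textbf{Proof plan for \cref{obs:Correctness}.}

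The plan is to exploit the intrinsic ``left-to-right maxima'' characterization of the RITMO colouring that was given just after \cref{defn:colourings_of_permutations}: in $\mathbb{S}(\sigma)$, a value receives colour $1$ precisely when it is a left-to-right maximum of $\sigma$; after deleting those, the value receives colour $2$ when it is a left-to-right maximum of the remaining subsequence; and so on, colour $c$ being given to the left-to-right maxima at the $c$-th stage of this peeling procedure. Call $\sigma(i)$ a \emph{stage-$c$ record} of $\sigma$ if it is coloured $c$ by this process. The key point is that this notion of stage-$c$ record depends only on the restriction of $\sigma$ to the initial segment of indices containing $i$; more precisely, whether $\sigma(i)$ is a stage-$c$ record is determined entirely by the relative order of $\sigma(1),\dots,\sigma(i)$. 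First I would make this precise as a small claim: for $i\le j$, the value $\sigma(i)$ is a stage-$c$ record of $\sigma$ if and only if $\be_j(\sigma)(i)$ is a stage-$c$ record of $\be_j(\sigma)$.

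To prove that claim I would argue by induction on the stage $c$. For $c=1$ this is immediate: $\sigma(i)$ is a left-to-right maximum of $\sigma$ iff $\sigma(i)>\sigma(i')$ for all $i'<i$, which is a condition only on $\sigma(1),\dots,\sigma(i)$, hence equivalent to the corresponding condition in $\be_j(\sigma)$ since standardization preserves relative order. For the inductive step, assume the statement for all stages $<c$. After removing all values of colour $<c$, the index $i$ survives (by the inductive hypothesis, the set of surviving indices among $[i]$ is the same whether we work with $\sigma$ or with $\be_j(\sigma)$), and among these surviving values $\sigma(i)$ is a stage-$c$ record iff it is a left-to-right maximum of the surviving subsequence; again this depends only on the surviving values with index $\le i$, which coincide (in relative order) for $\sigma$ and for $\be_j(\sigma)$. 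This closes the induction.

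Granting the claim, the observation follows immediately: for every index $i\le j$, the colour $\mathbb{C}(\sigma)(i)$ equals the unique $c$ for which $\sigma(i)$ is a stage-$c$ record, which by the claim equals the unique $c$ for which $\be_j(\sigma)(i)$ is a stage-$c$ record, i.e.\ $\mathbb{C}(\be_j(\sigma))(i)$. Combined with the trivial fact that $\be_j$ commutes with the underlying permutation (i.e.\ the first coordinate of $\be_j(\mathbb{S}(\sigma))$ is $\be_j(\sigma)$, which is the first coordinate of $\mathbb{S}(\be_j(\sigma))$), this gives $\be_j(\mathbb{S}(\sigma))=\mathbb{S}(\be_j(\sigma))$.

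The only mildly delicate point — and the one I would write out carefully — is the inductive step, because one must check that the \emph{set of indices in $[i]$ that survive the first $c-1$ peeling stages} is genuinely the same for $\sigma$ and $\be_j(\sigma)$; this is exactly what the inductive hypothesis provides, but it is worth stating explicitly that the peeling of $\sigma$ restricted to $[j]$ agrees stage-by-stage with the peeling of $\be_j(\sigma)$, which is why the ``beginning'' restriction behaves so much better than an arbitrary restriction $\pat_I$ (for a general $I$ the surviving-index sets need not be compatible, as the example $\sigma=2134$ in the text shows). No heavy machinery is needed; the whole argument is a bookkeeping induction on the peeling stages.
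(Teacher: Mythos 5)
The paper states this as an \emph{observation} and gives no proof at all, treating it as evident; so there is nothing to compare your argument against, and the question is simply whether your proof is correct — it is. Your choice to work with the ``peel off left-to-right maxima, stage by stage'' description of $\mathbb{C}$ (rather than the value-by-value greedy definition in \cref{defn:colourings_of_permutations}) is exactly the right move: the greedy definition iterates over values from largest to smallest, and appending or deleting a suffix changes which values exist, so prefix-stability is not immediately visible from it; the peeling description makes it transparent, because whether index $i$ survives the first $c-1$ peeling stages — and whether it is a left-to-right maximum of the survivors — is a statement about the relative order of $\sigma(1),\dots,\sigma(i)$ only. Your induction on the stage $c$ establishes precisely this, and the concluding paragraph correctly translates ``stage-$c$ record'' back into $\mathbb{C}(\sigma)(i)=c$ and combines it with the trivial compatibility of $\be_j$ with the underlying (uncoloured) permutation. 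One small stylistic point: you implicitly rely on the equivalence between the greedy definition of RITMO and the peeling description; the paper does assert this equivalence in prose right after the definition, so it is fair game to use, but it is worth a one-line citation since your whole argument hinges on it. The closing remark about why $\be_j$ is better-behaved than a general $\pat_I$ is a nice touch and matches the paper's counterexample $\sigma=2134$, $I=\{2,3,4\}$.
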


\begin{proof}[Proof of \cref{lemma:well_defn}]
	 We can equivalently show that given an inherited $(n-1)$-colouring $(\pi , \mathfrak{c})$ of size $k$, both $\be_{k-1}(\pi , \mathfrak{c})$ and $\en_{k-1}(\pi , \mathfrak{c})$ are inherited $(n-1)$-colourings of size $k-1$.
	 
	Recall that we say an $(n-1)$-colouring $\mathfrak{c}$ of a permutation $\sigma\in\Av(\monodown_n)$ of size $k$ is \emph{inherited} if there is some permutation $\sigma\in\Av(\monodown_n)$ of size $\ell \geq k$ such that $\en_k ( \mathbb{S}(\sigma) )=(\pi, \mathfrak{c})$.
%	Therefore we can find $\sigma\in\Av(\monodown_n)$ such that $\en_k (\mathbb{S}(\sigma)) = (\pi , \mathfrak{c})$. 
	Then we have that $\en_{k-1} (\mathbb{S}(\sigma)) = \en_{k-1}(\pi , \mathfrak{c})$, and therefore $\en_{k-1}(\pi , \mathfrak{c})\in  \CCC_{n-1}(k-1)$. On the other hand, from \cref{obs:Correctness} we have that 
	\begin{multline}
	\label{eq:prop117trick}
	\be_{k-1}(\pi , \mathfrak{c}) =\be_{k-1}( \en_{k} ( \mathbb{S}(\sigma))) =\\
	 \en_{k-1} (  \be_{|\sigma| - 1} (\mathbb{S}(\sigma))) \stackrel{\ref{obs:Correctness}}{=} \en_{k-1} ( \mathbb{S}( \be_{|\sigma| - 1} (\sigma))) \, ,
	\end{multline}
	and so $\be_{k-1}(\pi , \mathfrak{c})\in  \CCC_{n-1}(k-1)$.
\end{proof}

We can now give a more precise formulation of \cref{thm:feasregmonotones*}. We recall that we denote by $\delta$ the Kronecker delta function.

\begin{thm}
	\label{thm:feasregmonotones}
	Let $\Pi$ be the projection map
	\begin{equation}
		\Pi : \mathbb{R}^{\mathcal C_{n-1}(k)} \to \mathbb{R}^{\Av_k(\monodown_n)}\label{eq:feasregmonotones}
	\end{equation}
	that sends the basis elements $(\delta_{(\pi, \mathfrak{c})}(x))_{x\in \mathcal C_{n-1}(k)}$ to $(\delta_{\pi}(x))_{x\in \Av_k(\monodown_n)}$, i.e.\ the map that ``forgets'' colourings.

	In this way, the feasible region $P^{\monodown_n}_k$ is the $\Pi$-projection of the cycle polytope of the overlap graph $\OvMon[k, \monodown_n]$. 
	That is,
	$$P^{\monodown_n}_k = \Pi ( P (\OvMon[k, \monodown_n]) )  \, .$$
\end{thm}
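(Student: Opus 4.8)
The plan is to prove the two inclusions $P^{\Av(\monodown_n)}_k \subseteq \Pi ( P (\OvMon[k, \monodown_n]) )$ and $\Pi ( P (\OvMon[k, \monodown_n]) ) \subseteq P^{\Av(\monodown_n)}_k$ separately, mimicking the structure of the proof of \cref{thm:312-avoiding} but keeping track of RITMO colourings throughout. The crucial combinatorial input will be an analogue of \cref{lemma:pathperm312} for the coloured setting: a surjectivity statement asserting that every walk in $\OvMon[k, \monodown_n]$ of size $s$ is of the form $W_k(\mathbb{S}(\sigma))$ (suitably interpreted on coloured permutations) for some $\sigma \in \Av_{s+k-1}(\monodown_n)$. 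This is presumably the lemma referred to in the excerpt as \cref{lemma:pathpermmono_part2}, and establishing it is where the real work lies.

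For the inclusion $P^{\Av(\monodown_n)}_k \subseteq \Pi ( P (\OvMon[k, \monodown_n]) )$: given $\vec v \in P^{\Av(\monodown_n)}_k$ with witnessing sequence $(\sigma^m)$, I would colour each $\sigma^m$ with its RITMO colouring $\mathbb{C}(\sigma^m)$ (which is an $(n-1)$-colouring since $\sigma^m$ avoids $\monodown_n$), and consider the walk in $\OvMon[k, \monodown_n]$ traced out by the consecutive coloured patterns of $\mathbb{S}(\sigma^m)$ — this is a genuine walk by \cref{lemma:well_defn}. As in the proof of \cref{prop:bnd_dim}, decomposing this walk into simple cycles plus a negligible path shows that the vector of coloured consecutive pattern proportions of $\mathbb{S}(\sigma^m)$ approaches $P(\OvMon[k, \monodown_n])$; applying $\Pi$ (which is continuous) and noting that $\Pi$ applied to the coloured proportions recovers exactly $\pcoc_k(\sigma^m)$ gives $\vec v \in \Pi(P(\OvMon[k, \monodown_n]))$. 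Since $\Pi$ is linear and $P(\OvMon[k, \monodown_n])$ is compact, the image is closed, so there is no issue passing to the limit.

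For the reverse inclusion $\Pi ( P (\OvMon[k, \monodown_n]) ) \subseteq P^{\Av(\monodown_n)}_k$: a point in $\Pi(P(\OvMon[k, \monodown_n]))$ is $\Pi$ of a convex combination of vertices $\vec e_{\mathcal C}$ of the cycle polytope, each $\mathcal C$ a simple cycle of $\OvMon[k, \monodown_n]$. Using \cref{prop:convexity_312} (applicable since $\monodown_n$ is $\ominus$-indecomposable for $n \geq 2$, so $\Av(\monodown_n)$ is closed under $\oplus$) together with the closedness of $P^{\Av(\monodown_n)}_k$ from \cref{prop:P_n_is_closed}, it suffices to realize each $\Pi(\vec e_{\mathcal C})$ as a limit point, and then take rational convex combinations. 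To realize $\Pi(\vec e_{\mathcal C})$, I would traverse the simple cycle $\mathcal C$ a large number $N$ of times to get a long closed walk in $\OvMon[k, \monodown_n]$, invoke the coloured surjectivity lemma to obtain a $\monodown_n$-avoiding permutation $\sigma_N$ whose coloured consecutive-pattern walk is this walk, and observe that $\pcoc_k(\sigma_N) = \Pi$ of the coloured proportions $\to \Pi(\vec e_{\mathcal C})$ as $N \to \infty$, with $|\sigma_N| \to \infty$.

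The main obstacle is the coloured analogue of \cref{lemma:pathperm312}, i.e.\ proving that every walk in $\OvMon[k, \monodown_n]$ lifts to a $\monodown_n$-avoiding permutation realizing it \emph{with the correct RITMO colouring of consecutive windows}. The difficulty, already flagged in the excerpt by the example $\sigma = 2134$, is that RITMO colourings do not localize: $\pat_I(\mathbb{S}(\sigma))$ need not equal $\mathbb{S}(\pat_I(\sigma))$. One must therefore carefully control, when appending a new value via a construction in the spirit of \cref{defin:append_permutation}, how the RITMO colouring of the tail window evolves — the edge set of $\OvMon[k, \monodown_n]$ consists precisely of the \emph{inherited} colourings, and the whole point of that restriction is to make such an appending step always possible along any prescribed edge while preserving both $\monodown_n$-avoidance and the RITMO property on the last $k$ entries. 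Getting this appending lemma right, including the case analysis on where the new value is inserted relative to the colour classes, is the technical heart; once it is in hand, the polytope-theoretic arguments above are routine adaptations of \cite{borga2019feasible}.
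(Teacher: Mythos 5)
Your overall structure matches the paper's: two inclusions, with the forward inclusion via decomposing the coloured walk into simple cycles plus a negligible path, and the reverse inclusion via traversing a simple cycle many times and applying convexity (\cref{prop:convexity_312}) plus closedness (\cref{prop:P_n_is_closed}). However, the key lemma you claim as the combinatorial input is \emph{false as stated}, and fixing it is exactly where the paper does extra work.

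You assert that ``every walk in $\OvMon[k, \monodown_n]$ of size $s$ is of the form $W_k(\mathbb{S}(\sigma))$ for some $\sigma \in \Av_{s+k-1}(\monodown_n)$,'' i.e.\ an exact coloured analogue of \cref{lemma:pathperm312}. This already fails at $s=1$: a single-edge walk $(e_1)$ with $e_1 = (\pi_1, \mathfrak{c}_1)$ would require a permutation $\sigma$ of size $k$ with $\mathbb{S}(\sigma) = (\pi_1,\mathfrak{c}_1)$, i.e.\ $\sigma = \pi_1$ and $\mathbb{C}(\pi_1) = \mathfrak{c}_1$. But the vertices and edges of $\OvMon[k, \monodown_n]$ are \emph{inherited} colourings, not only RITMO colourings; for example $({\color{blue}1}{\color{red}23})$ is an edge of $\OvMon[3, 321]$ but is not the RITMO colouring of $123$. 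In fact the very phenomenon you cite ($\pat_I(\mathbb{S}(\sigma)) \neq \mathbb{S}(\pat_I(\sigma))$ in general) is precisely what breaks exact surjectivity. The paper instead proves a weaker ``almost surjectivity'' statement (\cref{lemma:pathpermmono}, not \cref{lemma:pathpermmono_part2} as you guessed — the latter is the single-step appending lemma): for any walk $w$ there is a permutation $\sigma$ with $\Wpath[k,\monodown_n](\sigma) = w' \bullet w$ where $|w'| \leq C(k,n)$ is uniformly bounded. The bounded prefix $w'$ is needed to ``initialize'' the tail to the desired inherited colouring, and the induction requires maintaining the additional invariant that $\mathbb{C}(\sigma)$ is a rainbow $(n-1)$-colouring (this hypothesis appears in \cref{lemma:pathpermmono_part2} and is nontrivial to preserve).

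Once the lemma is restated in this weaker, correct form, your argument for the reverse inclusion survives unchanged: when traversing the simple cycle $\mathcal{C}$ a total of $N$ times, the bounded prefix $w'$ contributes only $O(1/N)$ to $\pcoc_k(\sigma_N)$, so the limit is still $\Pi(\vec{e}_{\mathcal{C}})$. Your forward-inclusion argument also agrees with the paper's, which uses the formula $\pcoc_k(\sigma) = \frac{1}{|\sigma|}\sum_{\pi}[\pi : \Wpath[k,\monodown_n](\sigma)]\vec{e}_\pi$ and the cycle-plus-path decomposition. So the approach is essentially the paper's, but you have misstated the crucial combinatorial lemma in a way that would not hold, and omitted the rainbow-colouring invariant that makes the single-step appending lemma applicable.
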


\subsection{The feasible region is the projection of the cycle polytope of the coloured overlap graph}

To prove \cref{thm:feasregmonotones}, we start by recalling that $P^{\monodown_n}_k $ is a convex set, as established in \cref{prop:convexity_312}.
Thus, in order to prove that $P^{\monodown_n}_k \supseteq \Pi ( P (\OvMon[k, \monodown_n]) )$ it is enough to show that for any vertex $\vec{v} \in P(\OvMon[k, \monodown_n])$ -- these vertices are given by the simple cycles of $\OvMon[k, \monodown_n]$ -- its projection $\Pi(\vec{v}) $ is in the feasible region. To this end, we construct a walk map $\Wpath[k, \monodown_n]$ (see \cref{defn:path_fnt} below) that transforms a permutation $\sigma\in\Av(\monodown_n)$ into a walk on the graph $\OvMon[k, \monodown_n]$. Secondly, in order to prove the other inclusion, we see via a factorization theorem that any point in the feasible region results from a sequence of walks in $\OvMon[k, \monodown_n]$ that can be asymptotically decomposed into simple cycles; so the feasible region must be in the convex hull of the vectors given by simple cycles.

\begin{defin}[The coloured walk function]\label{defn:path_fnt}
Let $\sigma $ be a permutation in $\Av_m(\monodown_n)$.
The walk $\Wpath[k, \monodown_n] (\sigma)$ is the walk of size $m-k+1$ on $\OvMon[k,\monodown_n]$  given by:
$$  \left(\pat_{\{1, \dots , k\}}(\mathbb{S}(\sigma )), \dots , \pat_{\{m-k+1, \dots , m\}}(\mathbb{S}(\sigma )) \right) \, , $$
where we recall that $\mathbb{S}(\sigma )=(\sigma,\mathbb{C}(\sigma ))$, with $\mathbb{C}(\sigma )$ the RITMO colouring of $\sigma$ presented in \cref{defn:colourings_of_permutations}.
\end{defin}

\begin{rem}
Given a permutation $\sigma $ that avoids $\monodown_n$, each of the restrictions 
$$ \pat_{\{\ell-k+1, \dots , \ell\}}(\mathbb{S}(\sigma )),\quad \text{for all}\quad \ell\in[k,m],$$
is an inherited $(n-1)$-colouring.
The fact that these are $(n-1)$-colourings follows because $\sigma $ avoids $\monodown_n $, and the fact that these are inherited colourings follows from \cref{obs:Correctness} after computations similar to \cref{eq:prop117trick}.
\end{rem}

\begin{exmp}
We present the walk $\Wpath[k, \monodown_n] (\sigma)$ corresponding to the permutation $\sigma=1243756$, for $k = 3$ and $n = 3$.
The RITMO colouring of $\sigma$ is ${\color{red} 1}{\color{red} 2}{\color{red} 4}{\color{blue} 3}{\color{red} 7}{\color{blue} 5}{\color{blue} 6}$, and the corresponding walk is
$({\color{red} 1}{\color{red} 2}{\color{red} 3}, {\color{red} 1}{\color{red} 3}{\color{blue} 2}, {\color{red} 2}{\color{blue} 1}{\color{red} 3}, {\color{blue} 1}{\color{red} 3}{\color{blue} 2}, {\color{red} 3}{\color{blue} 12})\, . $
We can see in \cref{fig:ovk3n2} this walk highlighted in orange on the coloured overlap graph $\OvMon[3, 321]$.
\end{exmp}

%\begin{figure}[htbp]
%\centering
%\includegraphics[scale=0.5]{ovk3n2_path.pdf}
%\caption{\label{fig:walk_example}The walk $\Wpath[3, 321] (1243756)$ in the coloured overlap graph $\OvMon[3, 321]$ is highlighted in orange.}
%\end{figure}

The following preliminary lemma is fundamental for the proof of \cref{thm:feasregmonotones}.

\begin{lm}\label{lemma:pathpermmono}
There exists a constant $C= C(k, n)$ such that, for any walk $w = (e_1, \dots , e_j) $ in $\OvMon[k, \monodown_n]$ there exists a walk $w' $ in $\OvMon[k, \monodown_n]$ of length $|w'|\leq C$ and a permutation $\sigma $ of size $j+k-1+|w'|$ that satisfies $\Wpath[k, \monodown_n](\sigma ) = w' \bullet w$.
\end{lm}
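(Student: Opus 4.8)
The plan is to construct the permutation $\sigma$ and the ``warm-up'' walk $w'$ by iteratively prepending edges in front of $w$, using a colour-aware analogue of \cref{lemma:lem_for_pathperm312}. Concretely, write $w = (e_1, \dots, e_j)$ and let $v_1 = \st(e_1)$ be the starting vertex, so $v_1 = (\rho, \mathfrak{d})$ is an inherited $(n-1)$-colouring of some $\rho \in \Av_{k-1}(\monodown_n)$. By definition of ``inherited'', there is a permutation $\sigma_0 \in \Av(\monodown_n)$ with $\en_{k-1}(\mathbb{S}(\sigma_0)) = (\rho, \mathfrak{d})$, and among all such witnesses we may fix one of minimal size; a compactness/finiteness argument shows this minimal size is bounded by a constant depending only on $k$ and $n$ (there are finitely many vertices, and for each a fixed finite witness can be chosen). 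The walk $w' := \Wpath[k, \monodown_n](\sigma_0)$ then has length $|\sigma_0| - k + 1 \leq C(k,n)$, and it ends at the vertex $v_1$, so $w' \bullet w$ is a legitimate walk.

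Next I would append the edges $e_1, \dots, e_j$ one at a time, maintaining a permutation $\sigma_i \in \Av(\monodown_n)$ such that $\Wpath[k, \monodown_n](\sigma_i) = w' \bullet (e_1, \dots, e_i)$, starting from $\sigma_{-1} := \sigma_0$ wait—more carefully, from the permutation realizing $w'$ and then extending. The key inductive step is: given $\sigma_i \in \Av(\monodown_n)$ with $\en_{k-1}(\mathbb{S}(\sigma_i))$ equal to $\st(e_{i+1})$, produce $\sigma_{i+1} = \sigma_i^{*\ell}$ for a suitable $\ell$ so that $\mathbb{S}(\sigma_{i+1}) $ restricted to its last $k$ indices is exactly the coloured pattern $e_{i+1}$, and $\sigma_{i+1}$ still avoids $\monodown_n$. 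This is the colour-refined analogue of \cref{lemma:lem_for_pathperm312}: appending a new final value $\ell$ to $\sigma_i$ can only be compatible with the RITMO colouring in a controlled way, and the colour of the new point is forced by the RITMO rule (it is $1$ plus the number of strictly larger values to its left among the surviving left-to-right maxima structure). Since $e_{i+1}$ is an edge of $\OvMon[k,\monodown_n]$ with $\st(e_{i+1}) = \en_{k-1}(\mathbb{S}(\sigma_i))$ — an inherited colouring — the compatibility of the prescribed final coloured point with the last $k-1$ coloured points is guaranteed, and one checks (as in the $312$ case, by a short case analysis on where a hypothetical $\monodown_n$-occurrence could sit relative to the new value) that no forbidden monotone pattern is created.

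The main obstacle, and the place requiring the most care, is verifying that the RITMO colouring of $\sigma_i^{*\ell}$ restricts correctly on the last $k$ indices — i.e.\ that $\en_k(\mathbb{S}(\sigma_i^{*\ell})) = e_{i+1}$ as a \emph{coloured} pattern, not merely as an uncoloured one. The subtlety is that RITMO colours are assigned globally (top value down), so inserting a new value could in principle shift colours of earlier points; one must argue that appending at the end changes only the colour of the new point and leaves the colours of all pre-existing points untouched (this uses the ``left-to-right maxima'' description: a new final value either becomes a new left-to-right maximum in some colour layer or slots into an existing layer, without disturbing the maxima already recorded to its left). Here is exactly where knowing $e_{i+1}$ is an \emph{inherited} colouring — hence realized as $\en_k(\mathbb{S}(\tau))$ for some witness $\tau$ — lets us transport the correct colour assignment. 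I expect the rest (the bound $|w'| \leq C$, the final size count $|\sigma| = j + k - 1 + |w'|$, and the concatenation bookkeeping) to be routine once this coloured version of the $312$-style insertion lemma is in place; indeed the paper has flagged such a lemma (an analogue of what is labelled \cref{lemma:pathpermmono_part2}) as the key technical input.
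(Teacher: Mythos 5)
Your overall scheme is the same as the paper's: induct along the walk, maintaining at each step a permutation $\sigma_i \in \Av(\monodown_n)$ with $\Wpath[k, \monodown_n](\sigma_i) = w' \bullet (e_1, \dots, e_i)$, and use an insertion lemma (which you correctly identify as \cref{lemma:pathpermmono_part2}) to perform each extension. The bookkeeping you flag as routine -- the bound $|w'| \leq C(k,n)$ from having finitely many witnesses, the size count, the fact that appending a final value leaves the RITMO colours of the pre-existing points unchanged (since RITMO is a left-to-right-maxima peeling and the new point is on the far right) -- is indeed routine, and those observations are all correct.

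The gap is that \cref{lemma:pathpermmono_part2} carries the hypothesis that $\mathbb{C}(\sigma)$ is a \emph{rainbow} $(n-1)$-colouring, and your construction does not guarantee this. You start from a minimal-size witness $\sigma_0$ of the vertex $v_1 = \st(e_1)$, but a minimal witness need not use all $n-1$ colours, and without the rainbow property the insertion lemma is genuinely false. A concrete counterexample with $n=4$, $k=2$: the vertex ${\color{red}1}$ has minimal witness $\sigma_0 = 1$, whose RITMO colouring uses only colour $1$. The coloured permutation ${\color{red}2}{\color{green}1}$ is an inherited $3$-colouring (it equals $\en_2(\mathbb{S}(3241))$, since $\mathbb{S}(3241) = {\color{red}3}{\color{blue}2}{\color{red}4}{\color{green}1}$), so it is an edge of $\OvMon[2,\monodown_4]$ with starting vertex ${\color{red}1}$. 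But the only appendings of $\sigma_0=1$ give $\mathbb{S}(21)={\color{red}2}{\color{blue}1}$ and $\mathbb{S}(12)={\color{red}1}{\color{red}2}$: neither has a colour-$3$ final entry, so no $\iota$ achieves $\en_2(\mathbb{S}(\sigma_0^{*\iota})) = {\color{red}2}{\color{green}1}$. This is exactly where the proof of \cref{lemma:pathpermmono_part2} uses the rainbow hypothesis: in both claims it needs an index $p$ with $\mathbb{C}(\sigma)(p)=f$ (respectively $f-1$), and such a $p$ may simply not exist otherwise.

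The paper's fix is the ingredient you are missing. In the base case, instead of taking the bare minimal witness, one takes $\sigma = (n-1)\cdots 1 \oplus \sigma_{e_1}$ (prepending a decreasing run of size $n-1$). This forces $\mathbb{C}(\sigma)$ to be a rainbow $(n-1)$-colouring without disturbing the coloured pattern at the end (the prepended values are all smaller than the $\sigma_{e_1}$ part, so the RITMO colouring of the tail is unchanged), and it only inflates the warm-up walk $w'$ by $n-1$ edges, which is absorbed into $C(k,n)$. The inductive step then maintains the rainbow property automatically, since appending a value never removes any colour already present. Once you add this prepending step, your argument coincides with the paper's.
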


\begin{rem}
	Note that, heuristically speaking, \cref{lemma:pathpermmono} states that the map $\Wpath[k, \monodown_n]$ is ``almost" surjective. This gives an analogue of the result stated in \cref{lemma:pathperm312} for $\monodown_n$-avoiding permutations instead of 312-avoiding permutations.
\end{rem}

In the same spirit of the proof of \cref{lemma:lem_for_pathperm312}, in order to prove \cref{lemma:pathpermmono} we need the following result (whose proof is postponed to \cref{sect:main_lem_proof}). Recall the definition of the set $C_{G}(e)$ of continuations of an edge $e$ in a graph $G$, i.e.\ the set of edges $e' \in E(G)$ such that $\st(e') = \ar(e) $.

\begin{lem}\label{lemma:pathpermmono_part2}
	Let $\sigma$ be a permutation in $\Av(\monodown_n)$ such that $\en_k(\mathbb{S}(\sigma))=(\pi , \mathfrak{c})$ for some $(\pi , \mathfrak{c})\in E(\OvMon[k, \monodown_n])$. Assume that $\mathbb{C}(\sigma)$ is a rainbow $(n-1)$-colouring. Let also $(\pi' , \mathfrak{c}')\in C_{\OvMon[k, \monodown_n]}(\pi, \mathfrak{c})$.
	Then there exists $\iota\in [|\sigma| + 1]$ such that $\en_k(\mathbb{S}(\sigma^{*\iota }))  =(\pi' , \mathfrak{c}')$.
\end{lem}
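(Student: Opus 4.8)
\textbf{Proof plan for \cref{lemma:pathpermmono_part2}.}

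The plan is to mimic the structure of the proof of \cref{lemma:lem_for_pathperm312}: we want to \emph{read off} from the target coloured pattern $(\pi', \mathfrak{c}')$ exactly which value $\iota$ must be appended to $\sigma$ so that, after standardization, the last $k$ entries of $\mathbb{S}(\sigma^{*\iota})$ reproduce $(\pi', \mathfrak{c}')$. First I would unwind the combinatorics of the RITMO colouring under the operation $\sigma \mapsto \sigma^{*\iota}$. Recall that RITMO colours the left-to-right maxima by $1$, then the left-to-right maxima of the remainder by $2$, and so on; equivalently, the colour of a value $v$ at index $i$ is one more than the length of the longest decreasing subsequence of $\sigma$ ending at position $i$. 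Appending a new final value $\iota$ only affects the colour of that new entry, and leaves all the colours of $\sigma$ unchanged — this is the crucial monotonicity of RITMO under right-appending, and it is precisely why we work with $\en_k$ and the construction $\sigma^{*\iota}$ rather than arbitrary insertions. The colour assigned to the new entry $\sigma^{*\iota}(|\sigma|+1)$ is $1 + (\text{length of the longest decreasing run of }\sigma\text{ that can be extended by a value }\iota)$, i.e.\ $1 + \max\{ c : \exists\, j\text{ with }\sigma(j) \ge \iota \text{ and }\mathbb C(\sigma)(j) = c\}$ (and $1$ if no such $j$ exists).

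Next I would use the compatibility hypothesis encoded in $(\pi', \mathfrak c') \in C_{\OvMon[k,\monodown_n]}(\pi, \mathfrak c)$: by \cref{defn:ov_graph_av_monotoneperm} this means $\be_{k-1}(\pi', \mathfrak c') = \en_{k-1}(\pi, \mathfrak c)$. So $(\pi', \mathfrak c')$ is obtained from the coloured pattern $\en_{k-1}(\pi,\mathfrak c)$ — which is literally the coloured pattern of the last $k-1$ entries of $\mathbb S(\sigma)$ — by adding one new rightmost point. The position (relative order) of the new point and its colour are prescribed by $(\pi',\mathfrak c')$. I would then choose $\iota \in [|\sigma|+1]$ to be the value of $\sigma$ (shifted appropriately, as in \cref{defin:append_permutation}) that sits in the same relative position among the last $k-1$ entries as the new point of $\pi'$ does among the first $k-1$ of $\pi'$ — exactly the "point just above" recipe from the $312$ proof, now in the decreasing-pattern setting. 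By construction $\en_k(\sigma^{*\iota}) = \pi'$ as \emph{uncoloured} permutations and $\en_{k-1}(\mathbb S(\sigma^{*\iota})) = \en_{k-1}(\mathbb S(\sigma)) = \en_{k-1}(\pi,\mathfrak c)$; the only thing left to check is that the colour of the new final entry equals $\mathfrak c'(k)$, and this is where the rainbow hypothesis on $\mathbb C(\sigma)$ enters: since all $n-1$ colours already appear in $\sigma$, the colour forced on the appended value by the RITMO rule coincides with the colour forced inside $\pi'$, because both are determined by the same "height above the new point" count applied to the (colour-preserved) last $k-1$ entries. One must also verify $\sigma^{*\iota} \in \Av(\monodown_n)$, which follows since its RITMO colouring still uses only $n-1$ colours (the appended colour is $\le n-1$), and a permutation avoids $\monodown_n$ iff its RITMO colouring is an $(n-1)$-colouring.

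The main obstacle I anticipate is the colour-matching step: showing that the colour the RITMO rule assigns to the appended value in the \emph{global} permutation $\sigma^{*\iota}$ agrees with the colour $\mathfrak c'(k)$ prescribed \emph{locally} by $(\pi',\mathfrak c')$. A priori a long decreasing subsequence of $\sigma$ could reach higher than anything visible in the last $k-1$ entries and inflate the colour; the rainbow assumption rules this out only if one argues carefully that the relevant colour is the colour of a value among the last $k-1$ entries that lies just above $\iota$, and that this value is itself coloured consistently with $\mathfrak c$ — i.e.\ that $\mathfrak c$, being inherited, "knows" the correct colours of those entries. I would handle this by spelling out that for an inherited colouring $\en_k(\mathbb S(\sigma))$, the colour of each of its entries equals its RITMO colour computed \emph{inside} $\sigma$, and that appending $\iota$ creates a monochromatic $21$ with a last-$(k-1)$ entry $\sigma(j)$ precisely when $\sigma(j) \ge \iota$; combining these gives that the appended colour is $1 + \mathfrak c(\text{index in }\pi\text{ of the lowest-positioned entry above }\iota)$, which is exactly the rule defining $\mathfrak c'(k)$ from $(\pi,\mathfrak c)$ once $(\pi',\mathfrak c')$ is an edge out of $(\pi,\mathfrak c)$. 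A careful figure (in the spirit of \cref{fig:Schema_proof_312}) would make this transparent.
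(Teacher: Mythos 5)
Your plan mimics the $312$ proof by picking a \emph{single canonical} value of $\iota$ — essentially $\iota = \widetilde{y}$, the value sitting just below the ``point just above'' in the last $k-1$ entries of $\sigma$ — and then hoping the RITMO colour that $\sigma^{*\iota}$ assigns to the new final entry automatically agrees with $\mathfrak{c}'(k)$. This is where the argument breaks. The crucial structural difference from the $312$ case is that for a fixed vertex $(\rho,\mathfrak{d}) = \en_{k-1}(\pi,\mathfrak{c})$ and a fixed slot $y = \pi'(k)$, there can be \emph{several} outgoing edges sharing the same $y$ but with different colours $f = \mathfrak{c}'(k)$ (look at the coloured overlap graph on p.~\pageref{big_matrix}: from ${\color{green}1}{\color{red}2}$ there are edges ${\color{green}1}{\color{red}3}{\color{blue}2}$ and ${\color{green}1}{\color{red}3}{\color{green}2}$, both with $y=2$, with $f = 2$ and $f = 3$ respectively). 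Hence the colour $\mathfrak{c}'(k)$ is \emph{not} determined by the position $y$, and a recipe for $\iota$ that only reads off the position cannot hit both targets. Concretely, take $n=4$, $k=3$, $\sigma = 3214$ (so $\mathbb{S}(\sigma) = {\color{red}3}{\color{blue}2}{\color{green}1}{\color{red}4}$, which is rainbow, with $\en_3(\mathbb{S}(\sigma)) = {\color{blue}2}{\color{green}1}{\color{red}3}$), and $(\pi',\mathfrak{c}') = {\color{green}1}{\color{red}3}{\color{green}2}$, which is a continuation. Here $y=2$, $f=3$, and $\widetilde{y}=4$; yet $\sigma^{*4}=32154$ gives $\en_3(\mathbb{S}(32154)) = {\color{green}1}{\color{red}3}{\color{blue}2} \neq (\pi',\mathfrak{c}')$ because the new entry lands just below the red $4$ and so is coloured $2$, not $3$. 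The correct choice is $\iota=2$ (yielding $\sigma^{*2}=43152$), and the value $2 = \sigma(2)$ you must tuck the new point under is \emph{not} one of the last $k-1$ entries of $\sigma$ at all — precisely the ``inflation'' phenomenon you flagged as a worry and then dismissed.

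Your intuition about the obstacle is right, but the resolution you sketch (that rainbowness plus inheritance forces the local and global colour rules to agree) is false as stated, and does not become a proof. What the paper actually does is show the \emph{existence} of some admissible $\iota$ rather than exhibit a formula for it: the condition $\en_k(\sigma^{*\iota}) = \pi'$ carves out an integer interval $(\widetilde{y-1}, \widetilde{y}]$ (\cref{lm:indexentry}), the condition $\mathbb{C}(\sigma^{*\iota})(|\sigma|+1) = f$ carves out another interval $[z_\sigma(f), z_\sigma(f-1))$ (\cref{lm:indexcolour}), and the heart of the proof is that these two intervals overlap. That overlap is proven by the two claims $z_\sigma(f) \leq \widetilde{y}$ and $z_\sigma(f-1) > \widetilde{y-1}+1$, each requiring a case analysis on whether the maximal index of colour $f$ (resp.\ $f-1$) falls among the last $|\rho|$ positions, and relying on Properties~2 and~3 of both \cref{lm:RITMO_Properties} and \cref{lm:ActiveSites_Properties}. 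The rainbow hypothesis is used only to guarantee that the relevant maximal index $p$ with colour $f$ (or $f-1$) actually exists in $\sigma$, so that $z_\sigma(f)$ and $z_\sigma(f-1)$ are genuinely read off from points of $\sigma$. None of this machinery is present in your plan, and it cannot be replaced by a single ``point just above'' formula.
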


\begin{proof}[Proof of \cref{lemma:pathpermmono}]
	We start by defining the desired constant $C=C(k,n)$.
	Recall that the edges of the coloured overlap graph $\OvMon[k, \monodown_n]$ are inherited colourings of permutations.
	Therefore, for each edge $e = (\pi, \mathfrak{c}) \in E(\OvMon[k, \monodown_n])$ we can choose $ \sigma_e$, one among the smallest $\monodown_n$-avoiding permutations  such that $(\pi, \mathfrak{c}) = \en_k(\mathbb{S}(\sigma_e))$.
	Define $C(k,n) \coloneqq \max_{e\in E(\OvMon[ k, \monodown_n])} |\sigma_e| + n - k - 1$.
	We claim that this is the desired constant.	
	
	\medskip
	
	We will prove a stronger version of the lemma, by constructing a permutation $\sigma\in \Av(\monodown_n)$ such that $\mathbb{C}(\sigma)$ is a rainbow $(n-1)$-colouring and $\Wpath[k, \monodown_n](\sigma ) = w' \bullet w$ for some walk $w'$ bounded as above.
	This will be proven by induction on the length of the walk $j = |w |$.

	\medskip
	
	We first consider the case $j=1$. 
	In this case, the walk $w  = (e_1)$ has a unique edge, and we can select $\sigma = (n-1) \cdots 1 \oplus \sigma_{e_1}$.
	In this way, it is clear that $\mathbb{C}(\sigma)$ is a rainbow $(n-1)$-colouring, because $\sigma$ has a monotone decreasing subsequence of size $n-1$, while it is clearly $\monodown_n$-avoiding.
	Furthermore, because $\en_k(\mathbb{S}(\sigma) ) = \en_k(\mathbb{S}(\sigma_{e_1}) ) = e_1$, we have that $\Wpath[k, \monodown_n](\sigma )= w ' \bullet e_1$ for some path $w '$ such that $|w ' \bullet e_1|=|w '|+1= 
	|\sigma| - k + 1 = |\sigma_{e_1}| + n - k$.
	Therefore we have that $|w '|= 
	|\sigma_{e_1}| + n - 1 - k \leq C$, concluding the base case.
	
	\medskip
	
	We now consider the case $j\geq  2$. Take a walk $w = (e_1, \dots , e_j)$ in $\OvMon[k, \monodown_n]$, and consider (by induction hypothesis) the permutation $\sigma$ such that $\Wpath[k, \monodown_n](\sigma ) = w ' \bullet  (e_1, \dots , e_{j-1})$ for some walk $w '$ of size at most $C$ and such that $\mathbb{C}(\sigma)$ is a rainbow $(n-1)$-colouring.
	
	From \cref{lemma:pathpermmono_part2}, we can find a value $\iota\in [|\sigma| + 1]$ such that $\en_k(\mathbb{S}(\sigma^{*\iota }))  =e_j$.
	If so, the colouring $\mathbb{C}(\sigma^{*\iota}) $ is clearly a rainbow $(n-1)$-colouring (hence, $\sigma^{*\iota}\in \Av(\monodown_n)$).
	Furthermore, we have that $\Wpath[k, \monodown_n](\sigma^{*\iota}) = w ' \bullet  (e_1, \dots , e_{j-1}, e_j)$, concluding the induction step, as $|w '|\leq C$ by hypothesis.
\end{proof}

We recover here a proposition from \cite{borga2019feasible} that will be important in establishing \cref{thm:feasregmonotones}.

\begin{prop}[Vertices of the cycle polytope]
Let $G$ be a directed graph.
The set of vertices of $P(G)$ is precisely $\{\, \vec{e}_{\CCC}\,  |\,  \CCC \text{ is a simple cycle of } G\}$.
\end{prop}

We can now prove the main result of this section.

\begin{proof}[Proof of \cref{thm:feasregmonotones}]
Let $\sigma\in\Av(\monodown_n)$. Let us first establish a formula for $\pcoc_k(\sigma)$ with respect to the walk $\Wpath[k, \monodown_n](\sigma)$ defined in \cref{defn:path_fnt}. Given a permutation $\rho$ with a colouring $\mathfrak{c}$ we set $\per(\rho,\mathfrak{c})=\rho$.
Given a walk $w$ in $\OvMon[k, \monodown_n ]$ and a permutation $\pi$, we define $[\pi : w]$ as the number of edges $e$ in $w$ such that $\per(e) = \pi$.
Thus, it easily follows that
\begin{equation}
\label{eq:formula_feas_vector}
\pcoc_k(\sigma ) = \frac{1}{|\sigma | } \sum_{\pi \in \Av_k(\monodown_n)} [\pi : \Wpath[k, \monodown_n](\sigma) ] \vec{e}_{\pi}  \, .
\end{equation}
On the other hand, using \cite[Proposition 2.2]{borga2019feasible}, the vertices of $P(\OvMon[k, \monodown_n])$ are given by the simple cycles of the graph $\OvMon[k, \monodown_n]$.
Specifically, the vertices are given by the vectors $\vec{e}_{\CCC}\in \mathbb{R}^{\mathcal C_{n-1}(k)}$, for each simple cycle $\mathcal C$ of $\OvMon[k, \monodown_n]$, as follows: 
$$(\vec{e}_{\CCC})_{(\pi, \mathfrak{c})} = \frac{\mathbb{1}[(\pi, \mathfrak{c}) \in \CCC]}{|\CCC|}\, , $$
for each inherited coloured permutation $(\pi, \mathfrak{c})$.
In this way, we have that 
\begin{equation}
\label{eq:proj_rel}
	\Pi (\vec{e}_{\CCC} ) = \frac{1}{|\CCC|}\sum_{\pi\in\Av_k(\monodown_n)} [\pi : \CCC]\vec{e}_{\pi} \, .
\end{equation}  

Now let us start by proving the inclusion $\Pi ( P (\OvMon[k, \monodown_n]) ) \subseteq P^{\monodown_n}_k $.
Take a vertex of the polytope $P(\OvMon[k,\monodown_n ]) $, that is a vector $\vec{e}_{\CCC}$ for some simple cycle $\CCC$ of $\OvMon[k,\monodown_n ]$.
Because $\CCC$ is a cycle, we can define the walk $\CCC^{\bullet \ell}$ obtained by concatenating $\ell$ times the cycle $\CCC$. From \cref{lemma:pathpermmono}, there exists a walk $w'_\ell$ with $|w'_{\ell}| \leq C(k,n)$  and a $\monodown_n$-avoiding permutation $\sigma^{\ell}$ of size $|w'_\ell|+\ell|\CCC|+k-1$, such that $\Wpath[k, \monodown_n](\sigma^{\ell} ) = w'_{\ell} \bullet \CCC^{\bullet \ell}$.
The next step is to prove that 
$$\pcoc_k(\sigma^{\ell})\xrightarrow{\ell\to \infty} \Pi(\vec{e}_{\CCC}).$$
We have that 
\begin{align*}
\pcoc_k(\sigma^{\ell}) &\stackrel{\eqref{eq:formula_feas_vector}}{=} \frac{1}{|\sigma^{\ell} | } \sum_{\pi \in \Av_k(\monodown_n)} [\pi : \Wpath[k, \monodown_n](\sigma^{\ell}) ] \vec{e}_{\pi}\\
&=\frac{\ell}{|\sigma^{\ell} | } \left(\sum_{\pi \in \Av_k(\monodown_n)}  [\pi : \CCC ] \vec{e}_{\pi} \right) + \frac{1}{|\sigma^{\ell} | }\sum_{\pi \in \Av_k(\monodown_n)} [\pi : w'_{\ell} ] \vec{e}_{\pi} \\
	&\stackrel{\eqref{eq:proj_rel}}{=} \frac{\ell |\CCC|}{|\sigma^{\ell} | }\Pi (\vec{e}_{\CCC} ) + \frac{1}{|\sigma^{\ell} | }\vec{z}_\ell = \left( 1 -  \frac{ k - 1 + |w'_{\ell}| }{|\sigma^{\ell} | } \right)\Pi (\vec{e}_{\CCC} ) + \frac{1}{|\sigma^{\ell} | }\vec{z}_\ell\, ,
\end{align*}
where $\vec{z}_\ell = \sum_{\pi \in \Av_k(\monodown_n)} [\pi : w'_{\ell} ] \vec{e}_{\pi}$.
However, because $|w'_{\ell}| \leq C(k,n)$, we have that 

\begin{equation*}
	\frac{ k - 1 + |w'_{\ell}| }{|\sigma^{\ell} | } \xrightarrow{\ell\to \infty} 0\quad\text{and}\quad\frac{1}{|\sigma^{\ell} | }|| \vec{z}_\ell||_1 =  \frac{1}{|\sigma^{\ell} | }\sum_{\pi \in \Av_k(\monodown_n)} [\pi : w'_{\ell} ] = \frac{|w'_{\ell}| }{|\sigma^{\ell}|} \xrightarrow{\ell\to \infty} 0\,  . 
\end{equation*}
Therefore $\pcoc_k(\sigma^{\ell}) \to \Pi (\vec{e}_{\CCC} )$.
This, together with \cref{prop:convexity_312}, shows the desired inclusion.

\medskip

For the other inclusion, consider $\vec{v} \in P^{\monodown_n}_k$, so that there is a sequence of $\monodown_n$-avoiding permutations $\sigma^{\ell}$ such that $\pcoc_k(\sigma^{\ell})\xrightarrow{\ell\to \infty} \vec{v}$ and that $|\sigma^{\ell}| \xrightarrow{\ell\to \infty}  \infty$.
Fix $\varepsilon > 0$, and let $M $ be an integer such that $\ell\geq M $ implies $|| \pcoc_k(\sigma^{\ell}) - \vec{v} ||_2 < \frac{\varepsilon}{2}$ and $|\sigma^{\ell}|>  \frac{6 k!}{\varepsilon } $.
The set of edges of the walk $\Wpath[k, \monodown_n](\sigma^{\ell})$ can be split into $\CCC_1^{(\ell)} \uplus \dots \uplus \CCC_j^{(\ell)} \uplus \mathcal T^{(\ell)} $, where each $\CCC_i^{(\ell)} $ is a simple cycle of $\OvMon[k, \monodown_n]$ and $\mathcal T^{(\ell)} $ is a path that does not repeat vertices, so $|\mathcal T^{(\ell)}| < V(\OvMon[k, \monodown_n] ) \leq (k-1)!$ (for a precise explanation of this fact see \cite[Lemma 3.13]{borga2019feasible}).
Thus, we get
\begin{multline*}
\pcoc_k(\sigma^{\ell}) \stackrel{\eqref{eq:formula_feas_vector}}{=} \frac{1}{|\sigma^{\ell} | } \sum_{\pi \in \Av_k(\monodown_n)} [\pi : \Wpath[k, \monodown_n](\sigma^{\ell} ) ] \vec{e}_{\pi}\\
=\frac{1}{|\sigma^{\ell} | } \sum_{i=1}^j \sum_{\pi \in \Av_k(\monodown_n)} [\pi : \CCC_i^{(\ell)} ] \vec{e}_{\pi} + \frac{1}{|\sigma^{\ell} | } \sum_{\pi \in \Av_k(\monodown_n)} [\pi : \mathcal T^{(\ell)} ] \vec{e}_{\pi}  \\
   \stackrel{\eqref{eq:proj_rel}}{=}  \frac{|\sigma^{\ell} |  - |\mathcal T^{(\ell)}| - k + 1}{|\sigma^{\ell} |} \sum_{i=1}^j \frac{|\CCC_i^{(\ell)}|}{|\sigma^{\ell} |  - |\mathcal T^{(\ell)}| - k + 1 } \Pi(\vec{e}_{\CCC_i^{(\ell)}} ) + \frac{1}{|\sigma^{\ell} | } \sum_{\pi \in \Av_k(\monodown_n)} [\pi : \mathcal T^{(\ell)} ] \vec{e}_{\pi} \, .
\end{multline*}
Now we set $\vec{x}\coloneqq \sum_{i=1}^j \frac{|\CCC_i^{(\ell)}|}{|\sigma^{\ell} |  - |\mathcal T^{(\ell)}| - k + 1 } \Pi(\vec{e}_{\CCC_i^{(\ell)}} ) $ and $\vec{y}\coloneqq\frac{1}{|\sigma^{\ell} | } \sum_{\pi \in \Av_k(\monodown_n)} [\pi : \mathcal T^{(\ell)} ] \vec{e}_{\pi}$. 
Note that $\vec{x} \in \Pi (P(\OvMon[k, \monodown_n ] ) ) $; indeed it is a convex combination (since $\sum_{i=1}^j |\CCC_i^{(\ell)}| = |\sigma^{\ell} |  - |\mathcal T^{(\ell)}| - k + 1$) of vectors corresponding to simple cycles.
We simply get that
$$\pcoc_k(\sigma^{\ell}) =   \frac{|\sigma^{\ell} |  - |\mathcal T^{(\ell)}| - k + 1}{|\sigma^{\ell} |} \vec{x} + \vec{y}\, . $$
Thus, 
\begin{multline}
\label{eq:dist_approx}
\dist\left(\pcoc_k(\sigma^{\ell}) , \Pi \left(P(\OvMon[k, \monodown_n ] ) \right)\right) \leq  ||\pcoc_k(\sigma^{\ell}) - \vec{x}||_2\\
  \leq\frac{|\mathcal T^{(\ell)}| + k - 1}{|\sigma^{\ell} |}|| \vec{x}||_2 + ||\vec{y}||_2\, .
\end{multline}

Observe that $||\vec{y}||_2 \leq \frac{1}{|\sigma^{\ell} | }\sum_{\pi \in \Av_k(\monodown_n)} [\pi : \mathcal T^{(\ell)} ] = \frac{|\mathcal{T}^{(\ell)}|}{|\sigma^{\ell} | }\leq \frac{(k-1)!}{|\sigma^{\ell} | }$.
Also, because the coordinates of $ \vec{x}$ are non-negative and sum to one, we have that $|| \vec{x}||_2 \leq 1$ and so that $\frac{|T^{(\ell)}| + k - 1}{|\sigma^{\ell} |} ||\vec{x}||_2 \leq \frac{(k-1)! + k - 1}{|\sigma^{\ell} |}$.
Then, we can simplify \cref{eq:dist_approx} to
$$ \dist(\pcoc_k(\sigma^{\ell}) , \Pi (P(\OvMon[k, \monodown_n ] ) )) \leq  \frac{(k-1)! + k  - 1 + (k-1)!}{|\sigma^{\ell} |}\leq \frac{3 k!}{|\sigma^{\ell} |}\, ,$$
so that for $\ell \geq M$ we have that $ \dist(\pcoc_k(\sigma^{\ell}) , \Pi (P(\OvMon[k, \monodown_n ] ) ))  < \frac{1}{2}\varepsilon$.
As a consequence, for $\ell\geq M$,
\begin{multline}
		\dist(\vec{v}, \Pi (P(\OvMon[k, \monodown_n ] ) ))\\
		\leq || \vec{v} - \pcoc_k(\sigma^{\ell}) ||_2 + \dist (\pcoc_k(\sigma^{\ell}), \Pi (P(\OvMon[k, \monodown_n ] ) )) <  \varepsilon \, . 
\end{multline}
Noting that $\Pi (P(\OvMon[k, \monodown_n ] ) ) $ is a closed set, since $\varepsilon $ is generic, we obtain that $\vec{v}$ is in the polytope $\Pi (P(\OvMon[k, \monodown_n ] ) )$, concluding the proof of the theorem.
\end{proof}

It just remains to prove \cref{lemma:pathpermmono_part2}. This is the goal of the next two sections.

\subsection{Preliminary results: basic properties of RITMO colourings and their relations with active sites}

We begin by stating (without proof) some basic properties of the RITMO colouring. We suggest to compare the following lemma with \cref{fig:RITMO_props}. We remark that Properties 2 and 3 in the following lemma arise as particular cases of a general result explained after the statement.

\begin{lm}\label{lm:RITMO_Properties}
Let $\sigma $ be a permutation, and consider $\mathbb{C}(\sigma) $ its RITMO colouring.

\begin{enumerate}
\item If $i < j \in [|\sigma |]$ such that $\sigma(i) > \sigma(j)$, then $\mathbb{C}(\sigma)(i ) < \mathbb{C}(\sigma)(j)$.

\item If $i < j \in [|\sigma |]$ such that $\sigma(i) < \sigma(j)$ and $\mathbb{C}(\sigma)(i ) < \mathbb{C}(\sigma)(j )$, then there exists $k$ such that $i< k < j$, $\sigma(k)>\sigma(j)$ and $\mathbb{C}(\sigma)(k) = \mathbb{C}(\sigma)(i)$.

\item If $i < j \in [|\sigma |]$ such that $\sigma(i) < \sigma(j)$ and $\mathbb{C}(\sigma)(i ) < \mathbb{C}(\sigma)(j )$, then there exists $h$ such that  $i< h < j$, $\sigma(h)>\sigma(j)$ and $\mathbb{C}(\sigma)(h) = \mathbb{C}(\sigma)(j)-1$.
\end{enumerate}
\end{lm}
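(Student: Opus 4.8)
The plan is to prove the three properties directly from the iterative description of the RITMO colouring, using the "left-to-right maxima" viewpoint given right after \cref{defn:colourings_of_permutations}: the values coloured $1$ are exactly the left-to-right maxima of $\sigma$, the values coloured $2$ are the left-to-right maxima after deleting those, and so on. Equivalently, $\mathbb{C}(\sigma)(j) = c$ means that the longest strictly decreasing subsequence of $\sigma$ ending at position $j$ has length exactly $c$ (this equivalence is itself worth recording as a first step, since all three parts follow cleanly from it). Let me write $L(j)$ for this length, so the claim to establish first is $\mathbb{C}(\sigma)(j) = L(j)$ for all $j$; this is immediate by induction on $c$ from the peeling description, because a value is removed at stage $c$ iff it becomes a left-to-right maximum only after the first $c-1$ "layers" of left-to-right maxima are stripped, which happens iff the longest decreasing subsequence ending there has length $c$.

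Granting the characterization $\mathbb{C}(\sigma)(j)=L(j)$, Property 1 is essentially trivial: if $i<j$ and $\sigma(i)>\sigma(j)$, then any decreasing subsequence ending at $i$ can be extended by appending $j$, so $L(j)\geq L(i)+1 > L(i)$, giving $\mathbb{C}(\sigma)(i)<\mathbb{C}(\sigma)(j)$. For Property 2, suppose $i<j$, $\sigma(i)<\sigma(j)$ and $\mathbb{C}(\sigma)(i)=a < b=\mathbb{C}(\sigma)(j)$. Since $L(j)=b$, take a decreasing subsequence $j_1 < \cdots < j_b = j$ witnessing it, with values $\sigma(j_1) > \cdots > \sigma(j_b)=\sigma(j)$. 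Look at the element $k\coloneqq j_{a}$ of this chain at level $a$: it satisfies $\mathbb{C}(\sigma)(k)=L(k)=a=\mathbb{C}(\sigma)(i)$ (the prefix of the chain up to $k$ has length $a$, and it is maximal because otherwise we could lengthen the whole chain), and $\sigma(k)\geq \sigma(j_{a+1})>\cdots>\sigma(j_b)=\sigma(j)$, so $\sigma(k)>\sigma(j)$. It remains to check $i<k<j$. We have $k=j_a \leq j_{b-1}<j_b=j$ since $a\le b-1$, so $k<j$. For $i<k$: if we had $k\le i$, then since $\sigma(k)>\sigma(j)>\sigma(i)$ we would get (if $k<i$) a decreasing chain of length $a$ ending at $k$ followed by $i$, hence $L(i)\geq a+1$, contradicting $\mathbb{C}(\sigma)(i)=a$; and $k=i$ is excluded because $\sigma(k)>\sigma(i)$. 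Hence $i<k<j$, proving Property 2.

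For Property 3, under the same hypotheses as Property 2, I would apply the same chain but take the element at level $b-1$ instead: set $h\coloneqq j_{b-1}$, which gives $\mathbb{C}(\sigma)(h)=L(h)=b-1=\mathbb{C}(\sigma)(j)-1$ and $\sigma(h)>\sigma(j)$ and $h<j$ exactly as before. The point $i<h$ follows from the argument at the end of the previous paragraph, noting $b-1\ge a$: if $h\le i$ then, since $\sigma(h)>\sigma(j)>\sigma(i)$, we could append $i$ to a decreasing chain of length $b-1$ ending at $h$ and obtain $L(i)\ge b>a$, contradiction; and $h=i$ is impossible as $\sigma(h)>\sigma(i)$. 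This finishes Property 3.

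I expect the only genuinely substantive step to be the first one — cleanly establishing and justifying the identity $\mathbb{C}(\sigma)(j)=L(j)$ from the definition, and deciding how much detail the paper wants there (it may simply be cited as folklore, in the same spirit as the $(n-1)$-colouring fact attributed to \cite{greene1974extension}). Everything after that is a short, uniform argument: pick a witnessing decreasing chain ending at $j$ and read off the element at the appropriate level. One mild subtlety to handle carefully is the inequality $\sigma(k)>\sigma(j)$ (resp.\ $\sigma(h)>\sigma(j)$): it is strict because all the intermediate values of the chain strictly exceed its final value $\sigma(j)$, and the chosen element is not the last one since its level is $<b$. I do not anticipate needing \cref{fig:RITMO_props} for the proof itself, only as an illustration.
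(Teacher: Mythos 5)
The paper states \cref{lm:RITMO_Properties} explicitly without proof (``We begin by stating (without proof) some basic properties of the RITMO colouring''), so there is no official argument to compare against; the burden is simply whether your argument is correct, and it is.

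Your central observation — that $\mathbb{C}(\sigma)(j)$ equals $L(j)$, the length of the longest strictly decreasing subsequence of $\sigma$ ending at position $j$ — is the standard characterisation of this greedy (Dilworth-type) colouring, and your justification via the peeling description is sound. For the record, the cleanest inductive check is: processing values from highest to lowest, the greedy rule assigns to $j$ the smallest colour not in $\{\mathbb{C}(\sigma)(p) : p<j,\ \sigma(p)>\sigma(j)\}$; by induction that set equals $\{L(p): p<j,\ \sigma(p)>\sigma(j)\}$, which one checks is exactly $\{1,\dots,L(j)-1\}$ (the prefixes of a longest chain ending at $j$ give the lower bound, and Property~1 applied inductively gives the upper bound), so the minimum excludant is $L(j)$. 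Once the identity $\mathbb{C}(\sigma)(j)=L(j)$ is in place, your derivations of Properties~1--3 by picking a maximum-length decreasing chain $j_1<\cdots<j_b=j$ and reading off $k=j_a$ (resp.\ $h=j_{b-1}$) are all correct, including the maximality argument showing $L(j_a)=a$, the strictness $\sigma(j_a)>\sigma(j)$, and the placement $i<k<j$ (with the case $a=b-1$ simply giving $k=h$). The only micro-point worth flagging in a write-up is that $a\geq 1$ so $j_a$ is defined, and $a\leq b-1$ so $j_{b-1}$ is defined and distinct from $j_b$; you implicitly use both and should say so. Overall this is exactly the kind of short, self-contained proof the authors presumably had in mind when they labelled the lemma as routine.
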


As mentioned before, we explain that Properties 2 and 3 are particular cases of the same general result: consider $i < j \in [|\sigma |]$ with $\sigma(i) < \sigma(j)$. Let $c=\mathbb{C}(\sigma)(i)$ and $d=\mathbb{C}(\sigma)(j)$ and assume that $c < d$. Then there are indices $i<k_c < k_{c+1} < \dots < k_{d-1} < j $ such that $\mathbb{C}(\sigma)(k_s) = s$ for all $s \in [c, d-1]$, and $\sigma(j)<\sigma(k_{d-1})  < \dots < \sigma(k_c)$.
We opt to single out Properties 2 and 3 because these will be enough for our applications.

\begin{figure}[htbp]
	\begin{minipage}[c]{0.33\textwidth}
		\centering
		\includegraphics[scale=0.6]{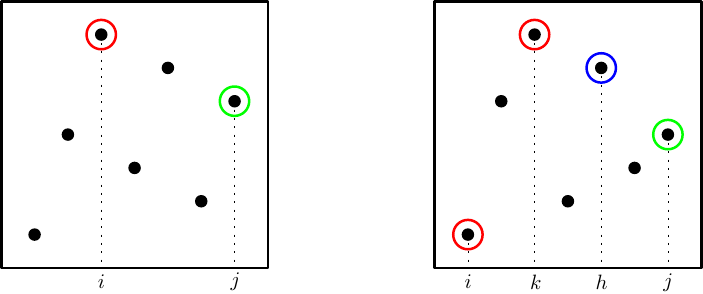}
	\end{minipage}\hfill
	\begin{minipage}[c]{0.35\textwidth}
		\captionsetup{width=\textwidth}
		\caption{\label{fig:RITMO_props} A schema for \cref{lm:RITMO_Properties}. The left-hand side illustrates Property 1 and the right-hand side illustrates Properties 2 and 3.}
	\end{minipage}
\end{figure}

We now introduce a key definition.

\begin{defin}\label{defn_col_active_sites}
	Given a coloured permutation $(\pi, \mathfrak{c})$, and a pair $(y, f)$ with $y \in [|\pi|+1]$, $f\geq 1$, we define the coloured permutation $(\pi, \mathfrak{c})^{*(y, f)}$ to be the permutation $\pi^{*y}$ (see \cref{defin:append_permutation}) together with the colouring $\mathfrak{c}^{*f}$.
	The latter is defined as a colouring $\mathfrak{c}^{*f}:[|\pi|+1] \to \Z_{\geq 1}$ such that $\mathfrak{c}^{*f}(i)=\mathfrak{c}(i)$ for all $i\in[|\pi|]$ and $\mathfrak{c}^{*f}(|\pi|+1) = f$.
	
	Let $(\pi, \mathfrak{c})$ be an inherited $(n-1)$-coloured permutation.
	An \emph{active site} is a pair $(y, f)$ with $y\in [|\pi|+1]$ and $f \in  [n-1]$, such that $(\pi, \mathfrak{c})^{*(y, f)}$ is an inherited $(n-1)$-coloured permutation.
\end{defin}

We present the following analogue of \cref{lm:RITMO_Properties}.

\begin{lm}\label{lm:ActiveSites_Properties}
Let $(y, f)$ be an active site of an inherited coloured permutation $(\pi, \mathfrak{c})$, and consider some index $i \in[|\pi|]$.
Then

\begin{enumerate}
\item if $\mathfrak{c} ( i ) \geq f$, then $y > \pi(i)$;

\item if $\pi ( i ) < y$ and $\mathfrak{c} ( i ) < f$, then there exists $k>i$ such that $\pi(k)\geq y$ and  $\mathfrak{c}(k) = \mathfrak{c}(i)$.

\item if $\pi ( i ) < y$ and $\mathfrak{c} ( i ) < f$, then there exists $h>i$ such that $\pi(h)\geq y$ and  $\mathfrak{c}(h) = f-1$.
\end{enumerate}

\end{lm}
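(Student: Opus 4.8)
The plan is to derive all three statements from Lemma \ref{lm:RITMO_Properties} by passing to a witnessing permutation. Since $(\pi,\mathfrak c)$ is inherited and $(y,f)$ is an active site, by definition $(\pi,\mathfrak c)^{*(y,f)}$ is an inherited $(n-1)$-coloured permutation; so there is a $\monodown_n$-avoiding permutation $\sigma$ of size $\ell\geq |\pi|+1$ with $\en_{|\pi|+1}(\mathbb S(\sigma))=(\pi,\mathfrak c)^{*(y,f)}$. Write $m=|\sigma|$ and let $J=\{m-|\pi|,\dots,m\}$ be the last $|\pi|+1$ indices of $\sigma$; under the order-isomorphism $J\to[|\pi|+1]$ the index $i\in[|\pi|]$ corresponds to some index $a\in J$ with $a<m$, and the last index $m$ corresponds to the appended entry, which has relative value $y$ and colour $f$. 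Crucially, because $\mathbb C$ is the RITMO colouring and restriction to a suffix commutes with $\mathbb C$ on the colour values we are tracking (the suffix $\pat_J$ of $\mathbb S(\sigma)$ is exactly $(\pi,\mathfrak c)^{*(y,f)}$ by the choice of $\sigma$), we have $\sigma(a)<\sigma(m)\iff \pi(i)<y$ and $\mathbb C(\sigma)(a)=\mathfrak c(i)$, $\mathbb C(\sigma)(m)=f$. So each claim about $(\pi,\mathfrak c)^{*(y,f)}$ translates verbatim into a claim about the pair of indices $a<m$ in $\sigma$.

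With this translation in hand, each part follows from the corresponding part of Lemma \ref{lm:RITMO_Properties} applied to $\sigma$ with the index pair $(a,m)$. For Part 1: if $\mathfrak c(i)\geq f$, i.e.\ $\mathbb C(\sigma)(a)\geq \mathbb C(\sigma)(m)$, then by the contrapositive of Property 1 of Lemma \ref{lm:RITMO_Properties} (which says $\sigma(a)>\sigma(m)$ forces $\mathbb C(\sigma)(a)<\mathbb C(\sigma)(m)$) we cannot have $\sigma(a)>\sigma(m)$; since $a\neq m$ the values differ, so $\sigma(a)<\sigma(m)$, i.e.\ $\pi(i)<y$; translating back, $y>\pi(i)$. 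For Parts 2 and 3: here the hypotheses give $\sigma(a)<\sigma(m)$ and $\mathbb C(\sigma)(a)<\mathbb C(\sigma)(m)$, so Properties 2 and 3 of Lemma \ref{lm:RITMO_Properties} yield an index $a<k'<m$ (resp.\ $a<h'<m$) with $\sigma(k')>\sigma(m)$ and $\mathbb C(\sigma)(k')=\mathbb C(\sigma)(a)$ (resp.\ $\mathbb C(\sigma)(h')=\mathbb C(\sigma)(m)-1$). Since $k'>a\geq m-|\pi|$ we have $k'\in J\setminus\{m\}$, so $k'$ corresponds to an index $k\in[|\pi|]$ with $k>i$; translating the value and colour statements back through the order-isomorphism gives $\pi(k)\geq y$ (the entry lies above the appended value $y$, hence $\geq y$ as a relative value in $\pi^{*y}$, which means $\geq y$ in $\pi$) and $\mathfrak c(k)=\mathfrak c(i)$, resp.\ $\mathfrak c(h)=f-1$. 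This is exactly the assertion.

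The main obstacle — and the step that needs to be written carefully — is the bookkeeping around the appending operation $*$: one must verify that the entry $\sigma(m)$ really does carry relative value $y$ and RITMO colour $f$ within the suffix, and that "$\pi(k)\geq y$" (a statement in the un-appended permutation $\pi$) is the correct translation of "$\sigma(k')>\sigma(m)$" (a statement in $\sigma$, equivalently "$\pi^{*y}(k)>y$" in $\pi^{*y}$). Both hinge on \cref{defin:append_permutation}: $\pi^{*y}$ is $\std(\pi(1),\dots,\pi(|\pi|),y-1/2)$, so the appended value is strictly between the old values $y-1$ and $y$, and $\pi^{*y}(k)>y$ holds iff $\pi(k)\geq y$. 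The only other subtlety is making sure the index $k'$ produced by Lemma \ref{lm:RITMO_Properties} falls inside the suffix $J$; this is automatic since $k'>a$ and $a$ is already in the suffix. Everything else is a direct transport of Lemma \ref{lm:RITMO_Properties} along the suffix-restriction, which by construction of $\sigma$ is an equality of coloured permutations, so no information is lost.
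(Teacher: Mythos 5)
Your proof is correct and follows exactly the paper's approach: take a witness $\sigma$ with $\en_{|\pi|+1}(\mathbb{S}(\sigma))=(\pi,\mathfrak c)^{*(y,f)}$, then transport each part of \cref{lm:RITMO_Properties} along the suffix restriction using the index pair $(a,|\sigma|)$. The paper dispatches the bookkeeping in a single sentence (and is even slightly imprecise in writing ``$\sigma(j)=y$'' where only the relative value within the suffix equals $y$); your version simply spells out carefully the order-isomorphism, the value-shift from $\pi^{*y}$ back to $\pi$, and why the produced index stays inside the suffix, which are exactly the details the paper leaves implicit.
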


\begin{proof}
Let $\sigma $ be a permutation such that $\en_{|\pi|+1}(\mathbb{S}(\sigma)) =(\pi, \mathfrak{c})^{*(y, f)}$, which exists because $(y, f)$ is an active site of $(\pi, \mathfrak{c})$.
The lemma is an immediate consequence of \cref{lm:RITMO_Properties}, applied to the RITMO colouring $\mathbb{C}(\sigma)$, and for $j = | \sigma |$ (so that $\mathbb{C}(\sigma)(j)=f$ and $\sigma(j)=y$). 
\end{proof}

We now observe a correspondence between edges of $\OvMon[k, \monodown_n]$ and active sites of some coloured permutations.

\begin{obs}\label{obs:active_sites_edge}
	Fix an inherited coloured permutation $(\pi_1, \mathfrak{c}_1)$ of size $k-1$.
	Then there exists a bijection between the set of edges $e\in \OvMon[k, \monodown_n]$ with $\st(e) = (\pi_1, \mathfrak{c}_1)$ and the set of active sites $(y, f)$ of $(\pi_1, \mathfrak{c}_1)$.
	Specifically, this correspondence between edges and active sites is given by the following two maps, which can be easily seen to be inverses of each other:
	$$e = (\pi, \mathfrak{c}) \mapsto (\pi(k), \mathfrak{c}(k)), \quad (y, f) \mapsto (\pi_1, \mathfrak{c}_1)^{*(y, f)}\, . $$
\end{obs}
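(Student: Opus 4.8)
The plan is to unwind \cref{defn:ov_graph_av_monotoneperm} and \cref{defn_col_active_sites} and verify that the two displayed maps are well-defined and mutually inverse. First I would recall that an edge $e$ of $\OvMon[k, \monodown_n]$ with $\st(e) = (\pi_1, \mathfrak{c}_1)$ is, by definition, an inherited $(n-1)$-coloured permutation $(\pi, \mathfrak{c}) \in \CCC_{n-1}(k)$ with $\be_{k-1}(\pi, \mathfrak{c}) = (\pi_1, \mathfrak{c}_1)$. The crucial elementary point is that such a pair $(\pi, \mathfrak{c})$ is reconstructed from the single pair $(\pi(k), \mathfrak{c}(k))$ via $(\pi, \mathfrak{c}) = (\pi_1, \mathfrak{c}_1)^{*(\pi(k), \mathfrak{c}(k))}$ (cf.\ \cref{defin:append_permutation}). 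For colourings this is immediate, since the definition of restriction of colourings gives $\mathfrak{c}_1 = \mathfrak{c}|_{[k-1]}$, whence $\mathfrak{c} = \mathfrak{c}_1^{*\mathfrak{c}(k)}$. For the underlying permutations this is the standardization identity $\pi = \pi_1^{*\pi(k)}$, valid whenever $\be_{k-1}(\pi) = \pi_1$: one checks it position by position, using that $\pi_1(i)$ equals $\pi(i)$ if $\pi(i) < \pi(k)$ and $\pi(i) - 1$ if $\pi(i) > \pi(k)$, and that the $\ast$-operation re-inserts the value $\pi(k)$ and shifts up exactly the values $\geq \pi(k)$.

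Granting this, I would verify that the first map lands in the set of active sites: for an edge $e = (\pi, \mathfrak{c})$ with start $(\pi_1, \mathfrak{c}_1)$, we have $\pi(k) \in [k] = [|\pi_1|+1]$, and $\mathfrak{c}(k) \in [n-1]$ because $\mathfrak{c}$ is an $(n-1)$-colouring, and $(\pi_1, \mathfrak{c}_1)^{*(\pi(k), \mathfrak{c}(k))} = (\pi, \mathfrak{c}) \in \CCC_{n-1}(k)$ is inherited; these are exactly the conditions in \cref{defn_col_active_sites} for $(\pi(k), \mathfrak{c}(k))$ to be an active site of $(\pi_1, \mathfrak{c}_1)$. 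For the reverse map, if $(y, f)$ is an active site of $(\pi_1, \mathfrak{c}_1)$, then $(\pi_1, \mathfrak{c}_1)^{*(y,f)} = (\pi_1^{*y}, \mathfrak{c}_1^{*f})$ is by definition an inherited $(n-1)$-coloured permutation of size $k$; in particular $\pi_1^{*y}$ is a consecutive pattern of some $\monodown_n$-avoiding permutation, hence avoids $\monodown_n$, so $(\pi_1^{*y}, \mathfrak{c}_1^{*f}) \in \CCC_{n-1}(k) = E(\OvMon[k,\monodown_n])$, and $\be_{k-1}$ of it is $(\pi_1, \mathfrak{c}_1)$ by the same standardization identity together with the definition of restriction of colourings; so it is an edge with start $(\pi_1, \mathfrak{c}_1)$.

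Finally I would check that the two maps are mutually inverse. Composing them starting from an active site $(y,f)$ gives $(y,f) \mapsto (\pi_1^{*y}, \mathfrak{c}_1^{*f}) \mapsto \bigl((\pi_1^{*y})(k), (\mathfrak{c}_1^{*f})(k)\bigr) = (y, f)$, directly from the definition of the $\ast$-operations (position $k$ of $\pi_1^{*y}$ carries the value $y$, and $\mathfrak{c}_1^{*f}(k) = f$). Composing them in the other order recovers the edge $(\pi,\mathfrak{c})$ by the reconstruction identity established in the first paragraph. There is no real obstacle in this argument; the one place demanding care is the bookkeeping in the standardization identity $\pi = \pi_1^{*\pi(k)}$, which underlies both directions and which I would write out explicitly.
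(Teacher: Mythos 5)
Your proof is correct and fills in, carefully and completely, exactly the routine verification that the paper elides with "which can be easily seen to be inverses of each other"; there is no other natural route. The key standardization identity $\pi = \pi_1^{*\pi(k)}$ when $\be_{k-1}(\pi)=\pi_1$ is checked properly, and both well-definedness claims are deduced directly from \cref{defn:ov_graph_av_monotoneperm} and \cref{defn_col_active_sites} as the paper intends.
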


Fix now an inherited coloured permutation  $(\pi, \mathfrak{c})$. By definition, there exists some $\sigma_0$ that satisfies $\en_{|\pi|}(\mathbb{S}(\sigma_0)) =  (\pi, \mathfrak{c})$.
The goal of the next section is to show that, with some mild restrictions on the chosen permutation $\sigma_0 $, if $(y, f)$ is an active site of $(\pi, \mathfrak{c}) $ then there exists an index $i\in[|\sigma_0|+1]$ such that 
$$\en_{|\pi|+1}(\mathbb{S}(\sigma_0^{*i})) =  (\pi, \mathfrak{c})^{*(y, f)} \, .$$
We already know that there exists a permutation $\sigma_1$ such that $\en_{|\pi|+1}(\mathbb{S}(\sigma_1)) =  (\pi, \mathfrak{c})^{*(y, f)}$; here we are interested in finding out if $\sigma_1$ can arise as an extension of $\sigma_0$.

\medskip

We introduce two definitions and give some of their simple properties.

\begin{defin}\label{defin:widetilde}
Let $\pi$ and $\sigma$ be two permutations such that $\pi = \en_{k-1}(\sigma)$.
For a point at height $\ell\in[|\pi|]$ in the diagram of $\pi$, we define $\widetilde{\ell}$ to be the height of the corresponding point in the diagram of $\sigma$.
Algebraically we have that $\widetilde{\ell}= \sigma(|\sigma | - |\pi | + \pi^{-1}(\ell))$. 
We use the convention that $\widetilde{|\pi|+1 } = |\sigma| +1$ and $\widetilde{0} = 0$.
\end{defin}

See \cref{fig:example_for_defn_tilde} for an example. We have the following simple result.

\begin{figure}[htbp]
	\begin{minipage}[c]{0.33\textwidth}
		\centering
		\includegraphics[scale=0.53]{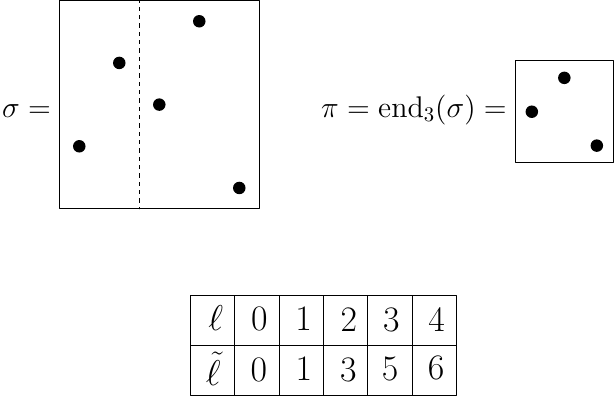}
	\end{minipage}\hfill
	\begin{minipage}[c]{0.43\textwidth}
		\captionsetup{width=\textwidth}
		\caption{\label{fig:example_for_defn_tilde}A schema illustrating \cref{defin:widetilde}. On top-left the permutation $\sigma = 24351$, on top-right the pattern $\pi = 231$ induced by the last three indices of $\sigma$, and on the bottom the quantities $\tilde{\ell}$. }
	\end{minipage}
\end{figure}

\begin{lm}\label{lm:indexentry}
Let $\sigma, \pi $ be permutations such that $\pi = \en_{k-1}(\sigma)$. 
Let $y \in  [|\pi |+1]$ and $\iota \in [|\sigma |+1]$.
Then we have that 
$$\en_k(\sigma^{* \iota} )= \pi^{* y} \iff \widetilde{y-1} < \iota \leq \widetilde{y}\, . $$

\end{lm}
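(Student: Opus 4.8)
The statement is purely about the operation $\sigma \mapsto \sigma^{*\iota}$ of appending a new final value $\iota$ to $\sigma$ (shifting up all values $\geq \iota$), and how this interacts with taking the pattern on the last $k$ indices. Since $\pi = \en_{k-1}(\sigma)$, the last $k-1$ indices of $\sigma^{*\iota}$ are the last $k-2$ indices of $\sigma$ together with the new appended index, so the last $k$ indices of $\sigma^{*\iota}$ are exactly the last $k-1$ indices of $\sigma$ together with the new one. The plan is therefore to show that $\en_k(\sigma^{*\iota})$ is determined by the relative order of $\iota$ among the values $\sigma(|\sigma|-k+2), \dots, \sigma(|\sigma|)$ of $\sigma$ on its last $k-1$ indices, which are precisely the values $\widetilde 1, \dots, \widetilde{|\pi|}$ up to relabeling by $\std$, and that appending a value lying strictly between $\widetilde{y-1}$ and $\widetilde y$ (with the stated conventions $\widetilde 0 = 0$, $\widetilde{|\pi|+1} = |\sigma|+1$) produces precisely the pattern $\pi^{*y}$.

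Concretely, I would argue as follows. First recall from \cref{defin:append_permutation} that $\sigma^{*\iota} = \std(\sigma(1),\dots,\sigma(|\sigma|),\iota - 1/2)$, so $\en_k(\sigma^{*\iota}) = \std\big(\sigma(|\sigma|-k+2),\dots,\sigma(|\sigma|),\iota-1/2\big)$. Now $\std$ only depends on relative order, and by \cref{defin:widetilde} the multiset $\{\sigma(|\sigma|-k+2),\dots,\sigma(|\sigma|)\}$ equals $\{\widetilde 1,\dots,\widetilde{|\pi|}\}$, with $\sigma$ on its last $k-1$ indices realizing the pattern $\pi$; more precisely $\sigma(|\sigma|-|\pi|+\pi^{-1}(\ell)) = \widetilde\ell$, so the value at height $\ell$ of $\pi$ sits at height $\widetilde\ell$ in $\sigma$. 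Hence $\en_k(\sigma^{*\iota})$ is the standardization of the sequence $\pi$ with one extra final entry whose relative rank among the entries of $\pi$ is: larger than the entry at height $\ell$ iff $\iota - 1/2 > \widetilde\ell$ iff $\iota > \widetilde\ell$ (as $\iota, \widetilde\ell$ are integers). Thus the new entry has exactly $y-1$ entries of $\pi$ below it, where $y$ is characterized by $\widetilde{y-1} < \iota \leq \widetilde{y}$ — using the monotonicity $\widetilde 0 = 0 < \widetilde{?} < \dots < \widetilde{|\pi|} < \widetilde{|\pi|+1} = |\sigma|+1$ of the heights, which guarantees such a $y$ is unique. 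But "appending a final entry with exactly $y-1$ of the existing entries below it" is precisely the definition of $\pi^{*y}$ (\cref{defin:append_permutation}, with $\ell = y$). This gives both implications at once: $\en_k(\sigma^{*\iota}) = \pi^{*y}$ exactly when $\iota$ falls in the interval $(\widetilde{y-1}, \widetilde{y}\,]$.

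I do not expect a serious obstacle here; the only points requiring care are the boundary conventions ($\widetilde 0 = 0$, which handles $\iota = 1$, i.e.\ $y = 1$; and $\widetilde{|\pi|+1} = |\sigma|+1$, which handles $\iota = |\sigma|+1$, i.e.\ $y = |\pi|+1$) and checking that the values $\widetilde\ell$ are strictly increasing in $\ell$ so that the half-open intervals $(\widetilde{y-1},\widetilde y\,]$ for $y \in [|\pi|+1]$ partition $[1,|\sigma|+1] \cap \Z$ — this is immediate since $\ell \mapsto \widetilde\ell$ is order-preserving (heights in $\pi$ correspond to heights in $\sigma$ in the same order, as the last $k-1$ values of $\sigma$ standardize to $\pi$). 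Writing $\iota - 1/2$ rather than $\iota$ in the standardization is exactly what makes the comparison with each integer $\widetilde\ell$ strict and unambiguous, so no tie-breaking subtlety arises.
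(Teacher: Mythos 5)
Your proof is correct. The paper states this lemma without proof, calling it a ``simple result,'' and your argument supplies exactly the routine verification the authors leave implicit: unwind the $\std$ characterization of $\sigma^{*\iota}$, observe that the last $k-1$ values of $\sigma$ are precisely $\widetilde 1 < \cdots < \widetilde{|\pi|}$, and locate $\iota - 1/2$ among them, with the conventions $\widetilde 0 = 0$ and $\widetilde{|\pi|+1} = |\sigma|+1$ handling the boundary cases $y = 1$ and $y = |\pi|+1$.
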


\begin{defin}\label{defn:z_sigma_funct}
	
Fix a permutation $\sigma$ and a colour $f\in\{1, 2, \dots\}$. If there exists a maximal index $p$ of $\sigma$ such that $\mathbb{C}(\sigma)(p) = f$ we set $z_{\sigma}(f) \coloneqq \sigma(p)+1$. Otherwise, if such a $p$ does not exist, then $z_{\sigma}(f) \coloneqq 1$.	
We use the convention that $z_{\sigma} (0) = |\sigma|+2$.
\end{defin}

See \cref{fig:example_for_defn} for an example. We have the following simple result.

\begin{figure}[htbp]
	\begin{minipage}[c]{0.55\textwidth}
		\centering
		\includegraphics[scale=0.4]{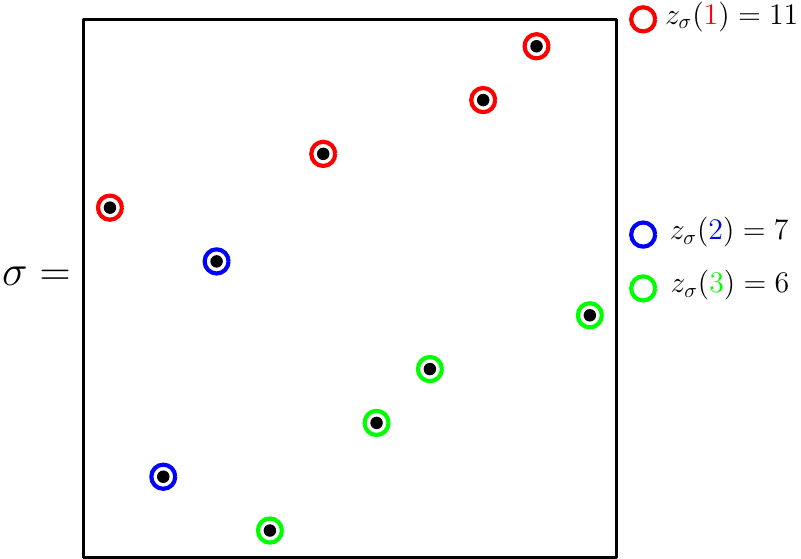}
	\end{minipage}\hfill
	\begin{minipage}[c]{0.43\textwidth}
		\captionsetup{width=\textwidth}
		\caption{\label{fig:example_for_defn}  A permutation $\sigma\in\Av(4321)$ coloured with its RITMO colouring. The quantities $z_{\sigma}(1),z_{\sigma}(2),z_{\sigma}(3)$, defined in \cref{defn:z_sigma_funct}, are highlighted on the right of the diagram of the permutation $\sigma$.}
	\end{minipage}
\end{figure}

\begin{lm}\label{lm:indexcolour}
Let $\sigma $ be a permutation, $f \in \mathbb{Z}_{\geq 1}$ a colour and $\iota \in [|\sigma|+1]$.
Then we have that 
$$\mathbb{C}(\sigma^{*\iota})(|\sigma|+1) = f \iff z_{\sigma}(f) \leq \iota < z_{\sigma}(f-1)\, .$$
\end{lm}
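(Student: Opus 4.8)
### Plan for proving Lemma~\ref{lm:indexcolour}

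The plan is to unwind the two definitions in play — the RITMO colouring $\mathbb{C}$ and the insertion operation $\sigma^{*\iota}$ — and to track exactly what colour the newly inserted final value receives. Recall that in $\sigma^{*\iota}$ the new entry has value $\iota$ after all entries of $\sigma$ that are $\geq \iota$ are shifted up by one; equivalently, by \cref{defin:append_permutation}, we are inserting a value that sits strictly between $\iota - 1$ and $\iota$ in the original scale. Since the new entry is the \emph{last} index of $\sigma^{*\iota}$, the RITMO rule assigns it the smallest colour $f$ such that there is no entry to its left with a larger value already coloured $f$ — in other words, $f$ is one more than the largest colour appearing among entries of $\sigma$ whose (original) value exceeds $\iota - \tfrac12$, i.e.\ whose value is $\geq \iota$. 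Here I would also use the ``left-to-right maxima'' description of RITMO given right after \cref{defn:colourings_of_permutations}: the entries of colour $\leq c$ are precisely those lying weakly above the $c$-th layer of iterated left-to-right maxima; adding a final entry of value $v$ forces it into layer $f$ where $f-1$ is the number of layers that contain an entry of value $> v$ to its left.

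Next I would translate ``largest colour among entries of value $\geq \iota$'' into the function $z_\sigma$. By \cref{defn:z_sigma_funct}, $z_\sigma(f) = \sigma(p)+1$ where $p$ is the last index with $\mathbb{C}(\sigma)(p) = f$, and $z_\sigma(f) = 1$ if no such index exists, with the convention $z_\sigma(0) = |\sigma|+2$. The key elementary observation is a monotonicity statement: by Property~1 of \cref{lm:RITMO_Properties}, within any fixed colour class the values are increasing in the index, so the \emph{last} entry of colour $f$ is also the \emph{largest}-valued entry of colour $f$; hence $z_\sigma(f) - 1$ is exactly the maximum value over all entries of colour $f$. A second monotonicity fact I would record: again by Property~1 (in fact by the iterated-maxima picture), the maximum value in colour class $f+1$ is strictly smaller than the maximum value in colour class $f$ — because any entry of colour $f+1$ has, to its left and above it, an entry of colour $f$, in particular an entry of colour $f$ of larger value. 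This gives $z_\sigma(f+1) \le z_\sigma(f)$ for all relevant $f$, so the thresholds $z_\sigma(\cdot)$ are (weakly) decreasing as the colour increases, and the intervals $[z_\sigma(f), z_\sigma(f-1))$ for $f = 1, 2, \dots$ tile $[1, |\sigma|+2)$, hence partition $[|\sigma|+1]$ as $\iota$ ranges over it.

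With these in hand the equivalence becomes a direct comparison. The new entry gets colour $f$ iff (a) some entry of $\sigma$ of value $\geq \iota$ has colour $f-1$ — equivalently the largest value of colour $f-1$ is $\geq \iota$, i.e.\ $z_\sigma(f-1) - 1 \geq \iota$, i.e.\ $\iota < z_\sigma(f-1)$ — and (b) no entry of value $\geq \iota$ has colour $\geq f$ — equivalently the largest value of colour $f$ is $< \iota$, i.e.\ $z_\sigma(f) - 1 < \iota$, i.e.\ $z_\sigma(f) \leq \iota$. (For the boundary colour $f=1$, condition (a) reads $\iota < z_\sigma(0) = |\sigma|+2$, which always holds; for the top colour, using the monotonicity of the $z_\sigma$'s ensures the intervals do not overlap.) Conjoining (a) and (b) gives exactly $z_\sigma(f) \leq \iota < z_\sigma(f-1)$, which is the claim.

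I expect the main obstacle to be the careful bookkeeping of strict versus weak inequalities at the two ends — i.e.\ making sure the half-open interval $[z_\sigma(f), z_\sigma(f-1))$ is the right one, rather than being off by one — together with the degenerate cases where a colour class is empty (so $z_\sigma(f) = 1$) or where $f$ exceeds the number of colours actually used by $\sigma$. All of these are handled cleanly once the two monotonicity facts above are established and the conventions $z_\sigma(0) = |\sigma|+2$ and ``empty class $\Rightarrow z_\sigma = 1$'' are plugged in; there is no serious combinatorial difficulty beyond that.
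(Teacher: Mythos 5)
The paper states \cref{lm:indexcolour} as a ``simple result'' and gives no proof, so there is nothing to compare your argument against. Your argument is correct: it rests exactly on the three facts that make this work, namely that the RITMO colours of the first $|\sigma|$ entries of $\sigma^{*\iota}$ agree with those of $\sigma$ (which is implicit in your use of $z_\sigma$ and is guaranteed by \cref{obs:Correctness}), that the colour of the appended final entry is the smallest $c$ for which no earlier entry of colour $c$ has a larger value (so only entries of $\sigma$ with value $\geq \iota$ matter), and the monotonicity of $z_\sigma$ deduced from Property~1 of \cref{lm:RITMO_Properties}, which both tiles $[|\sigma|+1]$ by the half-open intervals $[z_\sigma(f),z_\sigma(f-1))$ and upgrades your conditions (a) and (b) about colours $f-1$ and $f$ into statements about all colours $<f$ and $\geq f$. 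The conventions $z_\sigma(0)=|\sigma|+2$ and $z_\sigma(f)=1$ for unused colours are handled correctly at the boundaries.
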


\subsection{The proof of the main lemma}
We can now prove \cref{lemma:pathpermmono_part2}. 
We will do this as follows: in order to construct a suitable extension of the permutation $\sigma$, we will find a suitable index $\iota $ so that $\sigma^{*\iota}$ has the desired coloured pattern at the end.
According to \cref{lm:indexentry}, fixing the pattern at the end of $\sigma^{*\iota}$ determines an interval of admissible values for $\iota $, and according to \cref{lm:indexcolour}, fixing the colour of the last entry determines a second interval of admissible values for $\iota$.
The key step of the proof is to show that these two intervals have non-trivial intersection.

\label{sect:main_lem_proof}

\begin{proof}[Proof of \cref{lemma:pathpermmono_part2}]
Observe that $\be_{k-1}(\pi' , \mathfrak{c}') = \en_{k-1}(\pi, \mathfrak{c})= \en_{k-1}(\mathbb{S}(\sigma))$.
Let $(\rho, \mathfrak{d})= \en_{k-1}(\mathbb{S}(\sigma))$ be this common coloured permutation.
For an entry of height $\ell\in[|\rho|]$ in the diagram of $\rho$, we recall that $\widetilde{\ell}\in[|\sigma|]$ denotes the height of the corresponding entry in the diagram of $\sigma$, as in \cref{defin:widetilde}.
Let $(y, f)$ be the active site of $(\rho, \mathfrak{d})$ corresponding to the edge $(\pi' , \mathfrak{c}')$, so that $f \in [n-1]$ and $y \in [|\rho|+1]$ (see \cref{obs:active_sites_edge}).

From \cref{lm:indexentry}, we have that $\en_k(\sigma^{*\iota})= \pi'$ if and only if 
\begin{equation}\label{eq:interval_1}
 \widetilde{y-1} < \iota \leq \widetilde{y}\, . 
\end{equation}
From \cref{lm:indexcolour}, we have that $\mathbb{C}(\sigma^{*\iota})(|\sigma|+1)= f $ if and only if 
\begin{equation}\label{eq:interval_2}
z_{\sigma}(f) \leq \iota < z_{\sigma}(f-1)\, . 
\end{equation}
This gives us two intervals that are, by \cref{defin:widetilde} and \cref{defn:z_sigma_funct}, non-empty.
Our goal is to show that these intervals have a non-trivial intersection, concluding that the desired index $\iota $ exists.

\medskip

\textbf{Claim.} $z_{\sigma}(f) \leq \widetilde{y}$.

\medskip 

Assume by sake of contradiction that $z_{\sigma}(f) > \widetilde{y}$.
If $y = |\rho|+1$, then $\widetilde{y} = |\sigma|+1$ by convention. This gives a contradiction because $f\geq 1$ and so $z_{\sigma}(f)\leq |\sigma|+1$. 
Thus $y < |\rho|+1$. 
Let $p\in [|\sigma |]$ be the maximal index such that $\mathbb{C}(\sigma)(p) = f$.
We know that such a $p$ exists, because $\mathbb{C}(\sigma )$ is a rainbow $(n-1)$-colouring.
By maximality of $p$, it follows that $\sigma(p)+1 = z_{\sigma}(f)$ (see \cref{defn:z_sigma_funct}).
We now split the proof into two cases: when $p$ is included in the last $|\rho|$ indices of $\sigma$ and when it is not.

\begin{itemize}
	\item \textbf{Assume that $p> |\sigma|- |\rho|$}. 
	Let $q = p - (|\sigma|- |\rho| )>0$. Because $\en_{k-1}(\mathbb{S}(\sigma)) = (\rho, \mathfrak{d})$, we have that $f = \mathbb{C}(\sigma )(p) = \mathfrak{d}( q )$.
	Since we know that $\sigma(p )+1 = z_{\sigma}(f) > \widetilde{y}$, we have that $\rho(q)+1 > y $.
	This contradicts Property 1 of \cref{lm:ActiveSites_Properties}, as the active site $(y, f)$ satisfies both $\mathfrak{d}( q )\geq f$ and $\rho(q) \geq y$.
	\item \textbf{Assume that $p\leq |\sigma|- |\rho|$}. Then $\sigma(p) \neq \widetilde{y}$, so from $\sigma(p)+1= z_{\sigma}(f) > \tilde{y}$ we have that $\sigma(p) >  \widetilde{y}$.
	Using Property 1 of \cref{lm:RITMO_Properties} with $ i = p $ and $j = \sigma^{-1}(\tilde{y})$, we have that $f = \mathbb{C}(\sigma)(p) < \mathbb{C}(\sigma)( \sigma^{-1}(\widetilde{y}) )$.
	So $\mathfrak{d}( \rho^{-1}(y) ) = \mathbb{C}(\sigma)( \sigma^{-1}(\widetilde{y}) ) > f$.
	But this contradicts again Property 1 of \cref{lm:ActiveSites_Properties} for $i = \rho^{-1}(y)$, as the active site $(y, f)$ satisfies both $\mathfrak{d}( \rho^{-1}(y) )> f$ and $y \leq \rho(\rho^{-1}(y))$.	
\end{itemize}
Therefore, in both cases we have a contradiction.

\medskip

\textbf{Claim.} $z_{\sigma}(f-1) > \widetilde{y-1}+1$.

\medskip

 Assume by contradiction  that $z_{\sigma}(f-1) \leq \widetilde{y-1} + 1$.
If $f=1$, then recall that we use the convention that $z_{\sigma }(0) = |\sigma | + 2$, so we have 
$\widetilde{y-1} \geq  |\sigma | + 1$.
But $y \leq |\rho|+1$ so $\widetilde{y-1} \leq |\sigma|$, a contradiction.
Thus $f > 1$.
Let $p$ be the maximal index in $[|\sigma |]$ such that $\mathbb{C}(\sigma)(p) = f-1$.
We know that such a $p$ exists, because $\mathbb{C}(\sigma )$ is a rainbow $(n-1)$-colouring.
By construction, $\sigma(p)+1 = z_{\sigma}(f-1) \leq \widetilde{y-1}+1$ (see \cref{defn:z_sigma_funct}), so $\sigma(p)\leq \widetilde{y-1}$. 
As above, we now split the proof into two cases: when $p$ is included in the last $|\rho|$ indices of $\sigma$ and when it is not.

\begin{itemize}
\item \textbf{Assume that $p> |\sigma|- |\rho|$}. 
Let $q = p - (|\sigma|- |\rho| )>0$. 
Because $\en_{k-1}(\mathbb{S}(\sigma)) = (\rho, \mathfrak{d})$, we have that $f - 1 = \mathbb{C}(\sigma )(p) = \mathfrak{d}( q )$.
Since we know that $\sigma(p) \leq \widetilde{y-1}$, we have that $\rho(q) \leq y-1$.
Thus, by Property 2 of \cref{lm:ActiveSites_Properties}, there exists some $k>q$ such that  $\mathfrak{d}(k) = \mathfrak{d}( q ) = f - 1$.
The existence of such $k$ contradicts the maximality of $p$, as we get that $k + (|\sigma|- |\rho| ) > p $ has $\mathbb{C}(\sigma) (k + (|\sigma|- |\rho| )) = \mathfrak{d}(k) = f-1$.
\item \textbf{Assume that $p\leq |\sigma|- |\rho|$}. 
Let $r=\sigma^{-1}(\widetilde{y - 1})$.
Then $r > |\sigma|- |\rho| \geq p$ and so $p\neq r$.
It follows that $\sigma(p) = z_{\sigma}(f-1)-1 < \widetilde{y-1}$.

We now claim that $\mathbb{C}(\sigma )(r) < f - 1$.
Indeed, if $\mathbb{C}(\sigma )(r) = f-1$, because $p < r$ we have immediately a contradiction with the maximality of $p$.
Moreover, if $\mathbb{C}(\sigma )(r) > f-1$, Property 2 of \cref{lm:RITMO_Properties} guarantees that there is some $k > p$ such that $\sigma(k) > \sigma(r)$ and $\mathbb{C}(\sigma )(k)=f-1$.
Again, we have a contradiction with the maximality of $p$.

Now let $q=r-(|\sigma| - |\rho|)$, and observe that $\mathfrak{d}(q) = \mathbb{C}(\sigma)(r) < f - 1$.
On the other hand, because $r=\sigma^{-1}(\widetilde{y - 1})$, we have $\rho(q) = y-1$.
Because $(y, f)$ is an active site of $(\rho, \mathfrak{d})$, Property 3 of \cref{lm:ActiveSites_Properties} guarantees that there is some index $k> q$ of $\rho$ such that $\mathfrak{d}(k)= f-1$.
But this contradicts again the maximality of $p$, as we would have that $\mathbb{C}(\sigma)(k+|\sigma| - |\rho|) = f-1$ while $k+|\sigma| - |\rho| > q +|\sigma| - |\rho| = r > p$.
\end{itemize}
Therefore, in both cases we have a contradiction.

\medskip

Using the two claims above, we can conclude that the intervals in \cref{eq:interval_1,eq:interval_2} have a non-trivial intersection, and therefore the envisaged index $\iota$ we were looking for exists.
Consequently, we can construct the desired permutation $\sigma^{*\iota }$.
\end{proof}

\section*{Acknowledgements}
The authors are very grateful to Valentin F\'eray and Mathilde Bouvel for some precious discussions during the preparation of this paper.

\bibliography{mybib}
\bibliographystyle{alpha}

\end{document}